\documentclass[reqno, 12pt]{article}

\pdfoutput=1

\usepackage{enumerate}
\usepackage{latexsym}
\usepackage[centertags]{amsmath}
\usepackage{amsfonts}
\usepackage{amssymb}
\usepackage{amsthm}
\usepackage{newlfont}
\usepackage{graphics}
\usepackage{color}
\usepackage{float}
\usepackage{diagbox}
\usepackage{tocloft}
\usepackage{titlesec}
\usepackage{booktabs}
\usepackage[pagebackref]{hyperref}
\usepackage[linesnumbered,ruled,vlined]{algorithm2e}
\usepackage{url}
\usepackage{hyperref}
\usepackage{longtable}
\usepackage{rotating}
\usepackage{multirow}
\usepackage{extarrows}
\usepackage[sort,compress,numbers]{natbib}
\usepackage[utf8]{inputenc}

\newtheorem{thm}{Theorem}[section]
\newtheorem{cor}[thm]{Corollary}
\newtheorem{lem}[thm]{Lemma}
\newtheorem{prop}[thm]{Proposition}

\theoremstyle{mydefinition}
\newtheorem{dfn}[thm]{Definition}
\theoremstyle{myremark}

\newtheorem{exa}[thm]{Example}

\newtheorem{prob}[thm]{Open Problem}

\allowdisplaybreaks[4]

\SetKwInput{KwInput}{Input}                
\SetKwInput{KwOutput}{Output}              

\def\Z{\mathbb{Z}}

\renewcommand{\P}{{\mathbb{P}}}

\title{Explicit Construction of Polytopes whose Ehrhart Polynomials Realize any Given Sign Pattern}

\author{Feihu Liu$^{\color{blue} \dag}$, Sihao Tao$^{\color{blue} \S}$, and Guoce Xin$^{\color{blue} \P}$
\\[2mm]
{\small $^{\color{blue} \dag, \S, \P}$ School of Mathematical Sciences,}\\[-0.8ex]
{\small Capital Normal University, Beijing, 100048, P.R.~China}\\
{\small {\color{blue} $^\dag$} Email address: liufeihu7476@163.com}\\
{\small {\color{blue} $^\S$} Email address: sihao\_tao@cnu.edu.cn}\\
{\small {\color{blue} $^\P$} Email address: guoce\_xin@163.com}
}

\date{\today}

\begin{document}

\maketitle

\begin{abstract}
In Ehrhart theory, the well-known sign pattern problem asks: given a positive integer $d\geq 3$ and integers $1 \leq i_1 < \cdots < i_k \leq d-2$, does there exist a $d$-dimensional integral polytope $\mathcal{P}$ such that in its Ehrhart polynomial $i(\mathcal{P}, t)$ the coefficients of $t^{i_1}, \ldots, t^{i_k}$ are negative, while all remaining coefficients are positive?
This problem was proposed by Hibi, Higashitani, Tsuchiya, and Yoshida.
In this paper, we first construct a class of simplices $\mathcal{S}_d(m)$ whose Ehrhart polynomial has leading coefficient $m$ and all other coefficients fixed positive constants.
Then, using the Cartesian product of $\mathcal{S}_d(m)$ and the Reeve tetrahedron, we obtain the first complete solution to the sign pattern problem.
Finally, while attacking the sign pattern problem, we discovered a fast algorithm for computing the $h^*$-polynomial of a class of simplices $\Delta(0,q)$.
This algorithm is crucial for constructing the simplices $\mathcal{S}_d(m)$.
\end{abstract}

\noindent
\begin{small}
\emph{2020 Mathematics subject classification}:Primary 52B20;  Secondary 52B05; 52B11; 05A15.
\end{small}

\noindent
\begin{small}
\emph{Keywords}: Ehrhart polynomial; $h^*$-polynomial; Ehrhart positive; Sign pattern problem; Reeve tetrahedron; Eulerian polynomial.
\end{small}

\tableofcontents

\section{Introduction}

\subsection{Basic concepts}

An  \emph{integral convex polytope} (also called a \emph{lattice polytope}) is the convex hull of finitely many integer points in $\mathbb{R}^d$. For a finite set $\{v_1,v_2,\ldots,v_n\} \subset \mathbb{Z}^d$,
$$
\mathcal{P} = \mathrm{conv}\{v_1, \dots, v_n\} = \left\{\sum_{i=1}^n \lambda_i v_i \mid \lambda_i \geq 0,\ \sum_{i=1}^n \lambda_i = 1 \right\}.
$$
Its \emph{dimension} $\dim \mathcal{P}$ is the dimension of its affine hull. Its \emph{Ehrhart function}, defined as $i(\mathcal{P},t) = |t\mathcal{P}\cap \mathbb{Z}^d|$ for positive integers $t$, counts the number of integer points in the $t$-th dilation $t\mathcal{P} = \{ t\alpha \mid \alpha \in \mathcal{P} \}$. We always assume $\mathcal{P}$ is full-dimensional (i.e., $\dim \mathcal{P}=d$).

Ehrhart's famous theorem \cite{Ehrhart62} says that $i(\mathcal{P},t)$ is a polynomial in $t$ of degree $d$, now known as the \emph{Ehrhart polynomial} of $\mathcal{P}$.
It is well-known \cite[Corollary 3.20; Theorem 5.6]{BeckRobins} that the leading coefficient of $i(\mathcal{P},t)$ equals the volume of $\mathcal{P}$, the second highest coefficient of $i(\mathcal{P},t)$ equals half of the boundary volume of $\mathcal{P}$, and the constant term is $1$.
Therefore, these three coefficients are always positive.

The remaining coefficients, however, are more intricate and lack a straightforward geometric interpretation \cite{McMullen77}.
We refer to the coefficients of $t,t^2,\ldots,t^{d-2}$ in $i(\mathcal{P},t)$ as the \emph{middle Ehrhart coefficients} of $\mathcal{P}$.

The generating function
$$\mathrm{Ehr}(\mathcal{P},x)=1+\sum_{t\geq 1} i(\mathcal{P},t)x^t$$
is called the \emph{Ehrhart series} of $\mathcal{P}$.
It can be expressed in the form
$$\mathrm{Ehr}(\mathcal{P},x)=\frac{h_{\mathcal{P}}^{*}(x)}{(1-x)^{d+1}}=\frac{h_d^{*}x^d + h_{d-1}^{*}x^{d-1}+ \cdots+ h_1^*x+ h_0^*}{(1-x)^{d+1}},$$
where $\dim\mathcal{P}=d$ and $h^{*}(x)$ is a polynomial in $x$ of degree at most $d$ (see \cite[Chapter 3.5]{BeckRobins}).
The polynomial $h_{\mathcal{P}}^*(x)$ is commonly known as the \emph{$h^*$-polynomial} of $\mathcal{P}$.

Stanley \cite{Stanleyh-polynomial} (or \cite[Theorem 3.12]{BeckRobins}) shows that $h_d^*,h_{d-1}^*,\ldots,h_0^*$ are nonnegative integers.
Furthermore, the polynomial $h^*(x)$ satisfies \cite[Lemma 3.13; Corollary 3.16; Exercise 4.9]{BeckRobins}
\begin{align}\label{formula-h^*-prop}
h_0^* = 1,\quad h_1^* = \#(\mathcal P \cap \mathbb{Z}^d) - (d+1),\quad\text{ and }\quad h_d^* = \#(\mathrm{int}(\mathcal{P}) \cap \mathbb{Z}^d),
\end{align}
where $\mathrm{int}(\mathcal{P})$ denotes the interior of $\mathcal{P}$.
Clearly the Ehrhart polynomial is given by
\begin{align}\label{formual-ehrbyh*}
i(\mathcal{P},t) = \sum_{i=0}^{d} h^*_i \binom{t + d - i}{d}.
\end{align}

An integral convex polytope $\mathcal{P}$ is said to be \emph{Ehrhart positive} (or have \emph{Ehrhart positivity}) if all the coefficients of $i(\mathcal{P},t)$ are non-negative.
A well-known non-Ehrhart positive example is Reeve's tetrahedron.
\begin{dfn}{\em \cite[Example 3.22]{BeckRobins}}\label{ReeveTetrahedron}
Let $\mathcal{T}_m \subseteq \mathbb{R}^3$ be the tetrahedron with vertices $(0,0,0)$, $(1,0,0)$, $(0,1,0)$, and $(1,1,m)$, where $m\in\mathbb{N}^+$. Its Ehrhart polynomial is given by
\begin{align}\label{equ_tetrahedron}
i(\mathcal{T}_m,t)=\frac{m}{6}t^3+t^2+\frac{12-m}{6}t+1.
\end{align}
This tetrahedron is known as the \emph{Reeve tetrahedron}.
\end{dfn}

The study of Ehrhart positivity has attracted considerable attention in recent years.
Numerous families of polytopes have been established as Ehrhart positive, including lattice path matroids \cite{Ferroni-Morales2026}
(which encompass the hypersimplices~\cite{Hypersimplices}, minimal matroids~\cite{MinimalMatroids}, Schubert matroids~\cite{Fan-Li}, Catalan matroids~\cite{Chen-Li-Yao}), rank-two matroids~\cite{Rank-Two-Matroids}, cross-polytopes~\cite[Section~2]{FuLiu19}, the $y$-families of generalized permutohedra~\cite{PostnikovIMRN} (which encompass the Pitman--Stanley polytopes~\cite{Stanley-Pitman}), and cyclic polytopes~\cite{LiuFuA-M}.

Conversely, many polytopes have been shown to be non-Ehrhart positive, including order polytopes~\cite{LiuTsuchiya19,LiuXinZhang}, matroid polytopes~\cite{MatroidsPolytope}, and smooth polytopes~\cite{Smoothpolytopes}.
For a comprehensive introduction to Ehrhart positivity, we refer the reader to Liu's survey \cite{FuLiu19}.

Inspired by the study on Reeve tetrahedra and Ehrhart positivity, Hibi, Higashitani, Tsuchiya, and Yoshida \cite{Hibi-Higashitani-Tsuchiya-Yoshida} investigated possible sign patterns of coefficients in Ehrhart polynomials and proposed the following question:
\begin{prob}{\em \cite[Question 4.1]{Hibi-Higashitani-Tsuchiya-Yoshida}; \cite[Question 4.2.1]{FuLiu19}; \cite[Conjecture 4.3]{Ferroni23}}\label{QuestionHHTY}
Given a positive integer $d\geq 3$ and integers $1 \leq i_1 < \cdots < i_q \leq d-2$, does there exist a $d$-dimensional integral polytope $\mathcal{P}$ such that the coefficients of $t^{i_1}, \ldots, t^{i_q}$ in $i(\mathcal{P}, t)$ are negative, while the remaining coefficients are positive?
\end{prob}

Hibi, Higashitani, Tsuchiya, and Yoshida provided partial answers to this question \cite[Theorem 1.1; Theorem 1.2]{Hibi-Higashitani-Tsuchiya-Yoshida}.  Hibi et al also proved that Open Problem \ref{QuestionHHTY} holds for all $3\leq d\leq 6$.
As noted in Liu's review \cite{FuLiu19}, Tsuchiya proved that any sign pattern with at most $3$ negatives is possible among the middle Ehrhart coefficients.

In \cite{LiuTaoXin}, the authors proved the following theorem by establishing several embedding theorems. The basic tool is the Cartesian product of dilated polytopes, particularly the Reeve tetrahedron and the polytope to be called the Eulerian simplex of dimension 2.
\begin{thm}{\em \cite{LiuTaoXin}}\label{thm-eq-Only-Verified}
Let $d \geq 6$ and assume that Problem~\ref{QuestionHHTY} holds for all polytopes of dimensions $1$ through $d-1$. Then, to verify Problem~\ref{QuestionHHTY} for all $d$-dimensional polytopes, it suffices to check that for any integers $d_i \geq 2$ ($1\leq i\leq k-1$) and $d_k\geq 3$ satisfying $d=d_1+d_2+\cdots+d_k+3$, there exists a $d$-dimensional integral polytope $\mathcal{P}$ such that
$$\operatorname{Sgn}(\mathcal{P}) = (-1, \underbrace{+1, -1, \cdots, -1}_{d_1}, \underbrace{+1, -1, \ldots, -1}_{d_2}, \ldots, \underbrace{+1, -1, \ldots, -1}_{d_k}).$$
\end{thm}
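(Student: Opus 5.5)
The plan is a reduction: use the multiplicativity $i(\mathcal{P}\times\mathcal{Q},t)=i(\mathcal{P},t)\,i(\mathcal{Q},t)$ together with dilation, $i(s\mathcal{P},t)=i(\mathcal{P},st)$, to cut a prescribed sign vector of length $d-2$ into shorter pieces realized in lower dimension. Those pieces are available by the induction hypothesis. At the end, only the patterns listed in the statement should remain. Write $c_j(\mathcal{P})$ for the coefficient of $t^j$.

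\textbf{Step 1: dominance lemma.} Let $\dim\mathcal{P}=d_1$, $\dim\mathcal{Q}=d_2$, and let $s\to\infty$. The coefficient of $t^j$ in $i(s\mathcal{P}\times\mathcal{Q},t)$ is $\sum_{a+b=j}c_a(\mathcal{P})c_b(\mathcal{Q})s^a$. This sum is dominated by the term with the largest admissible $a$.
\begin{itemize}
\item For $j\le d_1$ the dominant term is $c_j(\mathcal{P})c_0(\mathcal{Q})s^j$, so the sign is that of $c_j(\mathcal{P})$.
\item For $j>d_1$ the dominant term is $c_{d_1}(\mathcal{P})c_{j-d_1}(\mathcal{Q})s^{d_1}$, so the sign is that of $c_{j-d_1}(\mathcal{Q})$.
\end{itemize}
Hence $\operatorname{Sgn}(s\mathcal{P}\times\mathcal{Q})$ is the concatenation of $\operatorname{Sgn}(\mathcal{P})$, then $+,+$ (coming from $c_{d_1-1}(\mathcal{P})$ and $c_{d_1}(\mathcal{P})c_0(\mathcal{Q})$), then $\operatorname{Sgn}(\mathcal{Q})$.

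\textbf{Step 2: degenerate cases of the lemma.} When $\mathcal{P}$ is a unit segment, the lemma shows that a leading $+$ followed by any realizable pattern of length $d-3$ is realizable. When $\mathcal{Q}$ is a segment, the same holds for a trailing $+$. When both factors are nontrivial, any occurrence of two consecutive $+$ splits the pattern into two shorter patterns, each realizable by hypothesis.

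After these reductions the sign vector may be assumed to:
\begin{itemize}
\item start with $-$;
\item end with $-$;
\item contain no two consecutive $+$.
\end{itemize}
Such a vector has the form $(-1,\,+1,-1,\dots,-1,\;\dots,\;+1,-1,\dots,-1)$, that is, an initial $-$ followed by blocks, each a single $+$ followed by at least one $-$. So every block length $d_i$ is at least $2$, and $1+\sum d_i=d-2$ gives $d=d_1+\cdots+d_k+3$.

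\textbf{Step 3: the remaining gap.} The statement additionally assumes $d_k\ge 3$. The patterns not yet covered are those whose last block is exactly $(+1,-1)$. I would handle them with a product against the Reeve tetrahedron $\mathcal{T}_m$ of Definition~\ref{ReeveTetrahedron}, or against the two-dimensional Eulerian simplex, using a mixed scaling: dilate $\mathcal{P}$ by $s$ and also let $m$ grow. Using \eqref{equ_tetrahedron}, the coefficient of $t^j$ in $i(s\mathcal{P},t)\,i(\mathcal{T}_m,t)$ then has leading behaviour of the form $\frac{m}{6}\bigl(c_{j-3}s^{j-3}-c_{j-1}s^{j-1}\bigr)$ plus corrections. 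By choosing the relative growth of $m$ and $s$, the Reeve factor should contribute exactly the tail $(\dots,+,-)$ glued onto a realizable pattern of dimension $d-3$, or of dimension $d-2$ via the Eulerian simplex.

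This last step is the one I expect to be the main obstacle: the Reeve coefficients $1$ and $(12-m)/6$ interleave with those of $\mathcal{P}$, and one has to verify carefully that no unwanted sign flips occur at the junction positions. I would first try $m\asymp s^{2}$ and track the two competing leading monomials at the junction indices explicitly.
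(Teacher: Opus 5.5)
There is a genuine gap: the residual class at the end of Step 2 is misidentified, and Step 3 is left unfinished.

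Your reductions in Steps 1 and 2 are correct. They are the products $r\mathcal{Q}\times[0,1]$ and $\mathcal{Q}\times[0,m]$, plus the splitting at a pair $++$, which the paper also uses. However, the conclusion you draw from them at the end of Step 2 is false, and it hides a class of patterns that none of your steps reaches. A vector that starts and ends with $-1$ and has no two consecutive $+1$ need not have $+1$ in its second entry. Examples are the all-negative vector $(-1,-1,-1,-1)$ for $d=6$, or $(-1,-1,+1,-1,-1,-1)$ for $d=8$. None of your constructions produces these, since each yields $c_1>0$, $c_{d-2}>0$, or a pair $++$. They are also not of the form in the statement, whose second entry is always $+1$. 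So you need a separate reduction for $s_{d-2}=s_{d-3}=s_1=-1$. The tool is exactly the Reeve computation you wrote in Step 3, but in the regime $r\to\infty$, $m/r\to\infty$. Take $\dim\mathcal{Q}=d-3$; then for $1\le j\le d-2$ the coefficient of $t^j$ in $i(r\mathcal{Q},t)\,i(\mathcal{T}_m,t)$ is dominated by $-\frac{m}{6}c_{j-1}(\mathcal{Q})r^{j-1}$. Hence $\operatorname{Sgn}(r\mathcal{Q}\times\mathcal{T}_m)=(-1,-1,-\operatorname{Sgn}(\mathcal{Q}),-1)$. Taking $\operatorname{Sgn}(\mathcal{Q})=(-s_{d-4},\ldots,-s_2)$, available because $3\le d-3<d$, realizes every such pattern.

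Step 3, on the other hand, is not yet an argument, and the Reeve factor is the wrong tool there. The patterns it has to reach have $s_{d-3}=+1$. With $\dim\mathcal{P}=d-3$, the coefficient of $t^{d-3}$ in $i(s\mathcal{P},t)\,i(\mathcal{T}_m,t)$ equals $c_{d-3}(\mathcal{P})s^{d-3}-\frac{m-12}{6}c_{d-4}(\mathcal{P})s^{d-4}+O(s^{d-5}+ms^{d-6})$. This behaves badly in every regime:
\begin{itemize}
\item If $m/s\to\infty$ (in particular for your choice $m\asymp s^2$), it is negative.
\item If $m/s\to 0$, the coefficients of $t^{d-3}$ and $t^{d-4}$ are both positive, so you only produce a $++$.
\item If $m\asymp s$, the signs depend on the actual values of the $c_i(\mathcal{P})$, which the induction hypothesis does not control.
\end{itemize}
The alternative you mention in passing is the one that works. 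Take a lattice polygon with Ehrhart polynomial $at^2+2t+1$ (for instance $\mathrm{conv}\{(0,0),(1,0),(1,a),(2,a)\}$ or $\mathcal{S}_2(a)$), and take $\dim\mathcal{Q}=d-2$. Let $r\to\infty$ and $a/r^2\to\infty$. The coefficient of $t^j$ in $(at^2+2t+1)\,i(r\mathcal{Q},t)$ is then dominated by $a\,c_{j-2}(\mathcal{Q})r^{j-2}$ for $j\ge 2$, and by $c_1(\mathcal{Q})r$ for $j=1$. So the sign vector is $(\operatorname{Sgn}(\mathcal{Q}),+1,\operatorname{sgn}c_1(\mathcal{Q}))$. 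Choosing $\operatorname{Sgn}(\mathcal{Q})=(s_{d-2},\ldots,s_3)$ realizes every pattern with $s_1=s_3=-1$ and $s_2=+1$. This covers the case $d_k=2$, where $s_3=-1$ because no $++$ occurs. With these two reductions added, the remaining patterns are exactly those in the statement. This matches the case analysis (Cases 1--6) that the paper sketches in its concluding remarks.
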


The theorem allows us to verify Open Problem \ref{QuestionHHTY} in a small number of cases. Indeed, we settled the $d\leq 9$ cases in \cite{LiuTaoXin}.
Nevertheless, this problem remains widely open.

\subsection{Main results}
To solve this open problem, we need to construct a polytope with any given sign patterns as stated in Theorem \ref{thm-eq-Only-Verified}.
These cases seem hard to attack, but we solved them by adding a new block, called Eulerian simplices of dimension $d\geq 1$,
because their $h^*$-polynomials are similar to the Eulerian polynomials $A_d(x)$ (see Proposition \ref{Generating Function for Eulerian Polynomials}).
\begin{dfn}
For every positive integer $m$ (i.e., $m\in \mathbb{Z}^+$),
a parameterized family $\mathcal{R}_d(m)$ of $d$-dimensional integral simplices in $\mathbb{R}^d$ is called an \emph{Eulerian simplex family of dimension $d$} if the $h^*$-polynomial of $\mathcal{R}_d(m)$ admits a decomposition of the form
$$h^*_{\mathcal{R}_d(m)}(x) = m A_d(x) + T(x),$$
where $T(x)$ is a polynomial independent of $m$ with $T(1)=0$, $\deg T(x)\leq d$.
\end{dfn}

As the main tool of this paper, we construct a class of simplices, denoted by $\mathcal{S}_d(m)$, which belongs to the Eulerian simplex family.
\begin{thm}\label{Theorem-Introdu-Sk}
For every positive integer $m$ and $d\geq 1$, the following family $\mathcal{S}_d(m)$ of polytopes fall into the Eulerian simplex family $\mathcal{R}_d(m)$ of dimension $d$:
\begin{align*}
\mathcal{S}_d(m)=
\begin{cases}
\ell_{m}=[0,m]=\{\alpha\in \mathbb{R} \mid 0\leq \alpha\leq m\} & \text{ if }\quad d=1, \\
\mathrm{conv}\{\mathbf{0}^d, \mathbf{e}_1^d, \mathbf{e}_2^d, \ldots, \mathbf{e}_{d-1}^d, (q_1(d), q_2(d), \ldots, q_{d-1}(d),d!\cdot m)\} & \text{ if }\quad d\geq 2,
\end{cases}
\end{align*}
where $\mathbf{e}_i^d$ denote the $i$-th unit coordinate vector of $\mathbb{R}^d$; $\mathbf{0}^d$ denote the origin of $\mathbb{R}^d$; and
$$q_i(d)=\frac{-d!}{i!+(i-1)!}\quad \text{for} \quad 1\leq i\leq d-1.$$
\end{thm}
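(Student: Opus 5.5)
The plan is to compute $h^*_{\mathcal{S}_d(m)}(x)$ directly from the fundamental parallelepiped of the simplex, and to show that the lattice points there split into $m$ copies of an "Eulerian" set plus an $m$-independent correction. The case $d=1$ is immediate: $h^*_{\ell_m}(x)=1+(m-1)x = mA_1(x)+(1-x)$, with the convention $A_1(x)=x$; this should be adjusted to whichever normalization of $A_d$ Proposition \ref{Generating Function for Eulerian Polynomials} fixes.

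For $d\ge 2$, put $N=d!\,m$ (the normalized volume) and $v=(q_1(d),\dots,q_{d-1}(d),N)$. Lift the vertices to $(\mathbf{0}^d,1)$, $(\mathbf e_i^d,1)$ and $(v,1)$. A point $\lambda_0(\mathbf 0,1)+\sum_{i<d}\lambda_i(\mathbf e_i,1)+\lambda_d(v,1)$ with all $\lambda\in[0,1)$ is integral exactly when the following hold:
\begin{itemize}
\item $\lambda_d=j/N$ for some $j\in\{0,\dots,N-1\}$;
\item $\lambda_i=\{-jq_i(d)/N\}$ for $1\le i\le d-1$;
\item $\lambda_0$ is the fractional part making the total height an integer.
\end{itemize}
Since $-q_i(d)/N = 1/(m(i-1)!(i+1))$, set $x=j/N$ and $a_i=d!\,i/(i+1)!\in\mathbb Z$. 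The height of the $j$-th point is then $\lceil f(x)\rceil$, where
$$f(x)=x+\sum_{i=1}^{d-1}\{a_ix\}.$$
So $h^*_k$ counts the $j\in[0,N)$ with $\lceil f(j/N)\rceil=k$.

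The key computation is the slope of $f$. Telescoping gives $\sum_{i=1}^{d-1} i/(i+1)! = 1-1/d!$, so $1+\sum_i a_i=d!$. Hence $f(x)=d!\,x-\sum_i\lfloor a_ix\rfloor$. Every breakpoint of $f$ is a multiple of $1/a_i$, and therefore a multiple of $1/d!$. Consequently, on each interval $I_p=[p/d!,(p+1)/d!)$ the function $f$ is linear of slope $d!$. Its value at the left endpoint is the integer $c_p:=p-\sum_i\lfloor a_ip/d!\rfloor$, and it stays in $(c_p,c_p+1)$ on the interior of $I_p$.

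Each $I_p$ contains exactly $m$ grid points $j/N$, namely $j=pm,\dots,pm+m-1$. The left endpoint has height $c_p$, and the other $m-1$ points have height $c_p+1$. Therefore
$$h^*_{\mathcal{S}_d(m)}(x)=\sum_{p=0}^{d!-1}\bigl(m\,x^{c_p+1}+x^{c_p}-x^{c_p+1}\bigr)= m\,E(x)+(1-x)\sum_p x^{c_p},$$
where $E(x)=\sum_{p}x^{c_p+1}$. The term $T(x)=(1-x)\sum_p x^{c_p}$ is independent of $m$ and satisfies $T(1)=0$. Moreover $\deg T\le d$, because $h^*$ has degree at most $d$ for every $m$.

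It remains to prove $E(x)=A_d(x)$, which I expect to be the main obstacle. This says that the statistic $p\mapsto c_p+1$ on $\{0,\dots,d!-1\}$ has the Eulerian distribution. The approach will be to write $p$ in the factorial number system, $p=\sum_{r} b_r\, r!$ with $0\le b_r\le r$. One then expresses $\lfloor a_ip/d!\rfloor=\lfloor p\,i/(i+1)!\rfloor$ in terms of these digits, via $p\,i/(i+1)! = p/i!-p/(i+1)!$, which also telescopes. This should give $c_p$ as a sum of indicators of the form $[\,\text{some comparison of consecutive digit-prefixes}\,]$, each of the shape $\{p/i!\}<\{p/(i+1)!\}$ or its reverse.

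Finally, I will interpret the digit sequence as a Lehmer (inversion) code of a permutation $\sigma\in S_d$, so that these indicators become exactly the descents of $\sigma$. Then $\sum_p x^{c_p+1}=\sum_{\sigma}x^{\mathrm{des}(\sigma)+1}=A_d(x)$, matching Proposition \ref{Generating Function for Eulerian Polynomials}. If the direct descent bijection is awkward, the fallback is to verify the recurrence $A_d(x)=(1+(d-1)x)A_{d-1}(x)+x(1-x)A_{d-1}'(x)$. One would do this by grouping $p$ according to its top factorial digit $b_{d-1}$ and comparing with the analogous statistic for $d-1$.
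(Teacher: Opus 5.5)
Your proposal is correct. The first half follows the paper: the slope-$d!$ block argument is Theorem~\ref{thm-dec-UnEX} specialized to this simplex. Your $1+\sum_i a_i=d!$ is the paper's computation $q_d(d)=d!$. Your exponent $c_p+1$ equals the paper's $j+1-\sum_{i=0}^{d-2}\lfloor j/(i!(i+2))\rfloor$, and $T(x)=(1-x)\sum_p x^{c_p}$ is Corollary~\ref{Ehrhart-Series-of-Sdm}. You re-derive the lattice points of the fundamental parallelepiped by hand rather than quoting Theorem~\ref{thm-Delta}, which is harmless.

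You diverge from the paper in proving $E(x)=A_d(x)$. The paper shows $\mathrm{des}(\aleph^{-1}(N))=N-\sum_{i=0}^{d-2}\lfloor N/(i!(i+2))\rfloor$ by induction on $d$: it peels off the top factorial digit, then does a case analysis on $\lfloor N/((d-2)!\,d)\rfloor$. Your telescoping handles all levels at once, and it does go through:
\begin{itemize}
\item From $\lfloor u-v\rfloor=\lfloor u\rfloor-\lfloor v\rfloor-[\{u\}<\{v\}]$ with $u=p/i!$ and $v=p/(i+1)!$, the floors telescope to $p$. Hence $c_p=\#\{1\le i\le d-1:\{p/i!\}<\{p/(i+1)!\}\}$.
\item Write $p=\sum_{r=1}^{d-1}b_r\,r!$ and set $b_0:=0$. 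The $i$-th condition becomes $\sum_{r<i}b_r\,r!<b_i\,(i-1)!$, which is exactly $b_{i-1}<b_i$.
\item Read $(b_{d-1},\ldots,b_1,0)$ as the Lehmer code. This is precisely the paper's $\aleph$, and the reversed order is the one that matches the constraint $0\le c_k\le d-k$.
\item Under this reading, $b_{i-1}<b_i$ is $c_{d-i}>c_{d-i+1}$. By the standard fact $\pi_k>\pi_{k+1}\iff c_k>c_{k+1}$, this is a descent.
\end{itemize}
You should write out these steps, since your proposal leaves them at ``this should give''. The resulting argument is shorter than the paper's induction and makes the bijection explicit. The paper's detour through the characteristic polynomials $L_1,L_2$ of general $\Delta(0,q^{(m)})$ buys a framework that also covers its other families, and the computer search that found $\mathcal{S}_d(m)$. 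That framework is not needed for this theorem.

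There is one small slip in your fallback. The recurrence you wrote is the one for $\sum_\pi x^{\mathrm{des}(\pi)}$; from $A_1=x$ it gives $A_2=2x$. With the paper's normalization $A_d(x)=\sum_\pi x^{1+\mathrm{des}(\pi)}$, it should read $A_d(x)=dx\,A_{d-1}(x)+x(1-x)A_{d-1}'(x)$.
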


Moreover, we compute the closed formula of the Ehrhart polynomial of $\mathcal{S}_d(m)$.
\begin{thm}\label{Theorem-Introd-Eulor-Ehrhar}
Let $m\geq 1$. For every positive integer $d$, the Ehrhart polynomials of $\mathcal{S}_d(m)$ are given by
\begin{align}\label{Equation-Eula-Sd}
i(\mathcal{S}_d(m),t)=mt^d+\sum_{i=0}^{d-1}\binom{d}{i} t^i.
\end{align}
\end{thm}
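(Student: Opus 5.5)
The plan is to show the $h^*$-identity
\begin{align*}
h^*_{\mathcal{S}_d(m)}(x) = m A_d(x) + \frac{(1-x)A_d(x)}{x} = (m-1)A_d(x) + \frac{A_d(x)}{x},
\end{align*}
and then translate it to the Ehrhart polynomial via generating functions. Here $A_d(x)=\sum_{k=1}^{d}A(d,k)x^k$ is the Eulerian polynomial, normalized so that $\sum_{t\ge 0} t^d x^t = A_d(x)/(1-x)^{d+1}$ (Proposition \ref{Generating Function for Eulerian Polynomials}). With this normalization, $\sum_{t\geq 0}(t+1)^d x^t = A_d(x)/\bigl(x(1-x)^{d+1}\bigr)$. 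Hence if the identity holds,
\begin{align*}
\mathrm{Ehr}(\mathcal{S}_d(m),x) = \sum_{t\ge0}\bigl(m t^d + (t+1)^d - t^d\bigr)x^t,
\end{align*}
which is exactly \eqref{Equation-Eula-Sd}, because $(t+1)^d-t^d=\sum_{i=0}^{d-1}\binom{d}{i}t^i$. This also identifies the polynomial $T(x)$ of Theorem \ref{Theorem-Introdu-Sk} as $T(x)=(1-x)A_d(x)/x$, which satisfies $T(1)=0$ and $\deg T\le d-1$. As a sanity check, for $d=1$ we get $h^*=1+(m-1)x$, which is correct for $[0,m]$. As a further check, the leading coefficient is the volume $|\det|/d! = d!m/d! = m$.

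To prove the identity I use the standard box-point description of $h^*$ for a simplex. Let $w_0=(\mathbf 0,1)$, $w_i=(\mathbf e_i,1)$ for $1\le i\le d-1$, and $w_d=(q_1,\dots,q_{d-1},d!m,1)$. Then $h^*_k$ counts the lattice points $\sum_i\lambda_iw_i$ with $\lambda\in[0,1)^{d+1}$ and last coordinate $k$.
\begin{itemize}
\item The $d$-th coordinate forces $\lambda_d=j/(d!m)$ for some $0\le j<d!m$.
\item Coordinates $1,\dots,d-1$ force $\lambda_i=\{-\lambda_d q_i\}$. Since $i!+(i-1)!=(i-1)!(i+1)$ divides $d!$ for $i\le d-1$, this gives $\lambda_i=\{j/((i-1)!(i+1)m)\}$.
\item The height then fixes $\lambda_0$, so the height equals $\lceil \lambda_d+\sum_{i=1}^{d-1}\lambda_i\rceil$.
\end{itemize}
Consequently
\begin{align*}
h^*_{\mathcal{S}_d(m)}(x)=\sum_{j=0}^{d!m-1}x^{\lceil j/(d!m)+\sum_{i=1}^{d-1}\{j/((i-1)!(i+1)m)\}\rceil}.
\end{align*}

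The main obstacle is evaluating this height statistic and matching it with the Eulerian distribution. My first attempt will write $j$ in a mixed-radix, factorial-number-system-like expansion adapted to the moduli $(i-1)!(i+1)m$. The fractional parts $\{j/((i-1)!(i+1)m)\}$ then become explicit in terms of the digits. I would then show two things:
\begin{itemize}
\item Digit strings with a generic value of the residue modulo $m$ contribute one copy of the descent distribution, i.e.\ $A_d(x)$, for each of $m-1$ residue classes.
\item The exceptional class, essentially $j\equiv 0$ in the innermost digit, contributes $A_d(x)/x$, which shifts the height down by one.
\end{itemize}
This is presumably where the paper's fast algorithm for $h^*$ of $\Delta(0,q)$ enters. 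It gives a recursion in $d$ (passing from $\mathcal S_{d-1}$ to $\mathcal S_d$ via the new digit), which I would match against the Eulerian recursion $A_d(x)=x(1-x)A_{d-1}'(x)+dxA_{d-1}(x)$. If the direct bijection is messy, I will instead prove the recursion for the height generating function by induction on $d$ and verify the base case $d=1$ above.
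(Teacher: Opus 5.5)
Your framework is right and parallels the paper's. Your box-point formula is equivalent to Theorem~\ref{thm-Delta}: since $q_d(d)=d!$, the exponent equals $\lceil j/m\rceil-\sum_{i=1}^{d-1}\lfloor j/((i-1)!(i+1)m)\rfloor$. Your residue-class picture is Theorem~\ref{thm-dec-UnEX}, and your generating-function translation is exactly Corollary~\ref{Corollary-Sdm-Ehrhart-polynomial}. Writing $j=mj'+r$ with $0\le r<m$, the exponent becomes $\chi(r>0)+j'-\sum_{i=0}^{d-2}\lfloor j'/(i!(i+2))\rfloor$. So the whole theorem reduces, with $m$ gone, to
\begin{align*}
\sum_{N=0}^{d!-1}x^{1+\mathrm{stat}_d(N)}=A_d(x),\qquad \mathrm{stat}_d(N):=N-\sum_{i=0}^{d-2}\left\lfloor \frac{N}{i!(i+2)}\right\rfloor .
\end{align*}
This identity is the real content of the theorem, and your proposal does not prove it. It only names two strategies, and neither works as stated. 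Two side remarks: the fast algorithm plays no role in the paper's proof (it was only used to find $\mathcal{S}_d(m)$), and $T(x)=(1-x)A_d(x)/x$ has degree exactly $d$, not $\le d-1$, which is harmless.

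\textbf{The two strategies.} A mixed-radix expansion ``adapted to the moduli $(i-1)!(i+1)m$'' does not exist. The moduli $(i-1)!(i+1)=2,3,8,30,144,\dots$ do not form a divisibility chain, so the fractional parts are not digits of anything. The usable expansion is the ordinary factorial base. Peel off the leading digit: write $N=c_1(d-1)!+r$ and set $c_2=\lfloor r/(d-2)!\rfloor$. Use $(d-1)!/(i!(i+2))\in\mathbb{Z}$ for $i\le d-3$, the telescoping sum $\sum_{i=0}^{d-3}\frac{1}{i!(i+2)}=1-\frac{1}{(d-1)!}$, and $\lfloor N/(d(d-2)!)\rfloor=c_1-\chi(c_1>c_2)$. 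Together these give $\mathrm{stat}_d(N)=\mathrm{stat}_{d-1}(r)+\chi(c_1>c_2)$. This involves the joint distribution of the statistic and the next digit, so it does not close to a recursion in $A_{d-1}$ alone. It is also unrelated to the insertion recursion $A_d=x(1-x)A_{d-1}'+dxA_{d-1}$.

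\textbf{The missing idea.} Read this recursion bijectively. The digits $(c_1,\dots,c_{d-1},0)$ form the inversion table (Lehmer code) of a unique $\pi\in\mathfrak{S}_d$ (Lemma~\ref{Permu-to-inverTab}). Position $j$ is a descent of $\pi$ if and only if $c_j>c_{j+1}$. Deleting $\pi_1$ and standardizing leaves a permutation with code $(c_2,\dots,c_d)$ and factorial value $r$. The recursion is then literally $\mathrm{des}(\pi)=\chi(c_1>c_2)+\mathrm{des}(\pi_2\cdots\pi_d)$. Induction on $d$ yields \eqref{Descent-Aleph-formul}, hence Corollary~\ref{Cor-Adx=descent}, which is how the paper proves it. Without this step, or some other proof that $\mathrm{stat}_d$ is Eulerian-distributed, your argument does not establish the theorem.
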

Observe that the middle Ehrhart coefficients and the second highest coefficient of a polytope are sufficiently small compared to the leading coefficient. This greatly facilitates the construction of polytopes for certain sign patterns.

Our main result is a resolution of Open Problem \ref{QuestionHHTY} by combining our constructed simplex family $\mathcal{S}_d(m)$, the Reeve tetrahedron, and Cartesian products of polytopes.
\begin{thm}\label{Theorem-Introdu-Sign-parrt}
For any dimension $d\geq 3$ and every possible sign pattern of the coefficients in the Ehrhart polynomial,
we can construct an integral polytope that realizes it.
\end{thm}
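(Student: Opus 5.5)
We argue by induction on $d$, using Theorem~\ref{thm-eq-Only-Verified} for the inductive step. The cases $3\le d\le 6$ (indeed $d\le 9$) are settled in \cite{Hibi-Higashitani-Tsuchiya-Yoshida,LiuTaoXin}. So fix $d\ge 7$ and assume every sign pattern is realizable in dimensions $1,\dots,d-1$. By Theorem~\ref{thm-eq-Only-Verified}, it remains to treat the special patterns: for $d_1,\dots,d_{k-1}\ge 2$, $d_k\ge 3$ with $d=d_1+\cdots+d_k+3$, we need a polytope whose coefficients of $t^1,\dots,t^{d-2}$ have signs
$$(-1,\underbrace{+1,-1,\ldots,-1}_{d_1},\ldots,\underbrace{+1,-1,\ldots,-1}_{d_k}).$$

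The candidate is a Cartesian product
$$\mathcal{P}=\mathcal{T}_{m_0}\times\mathcal{S}_{d_1}(m_1)\times\cdots\times\mathcal{S}_{d_k}(m_k),$$
possibly with the factors replaced by suitable dilations $s\,\mathcal{T}_{m_0}$, $s_j\mathcal{S}_{d_j}(m_j)$. Its Ehrhart polynomial is the product of the factors' Ehrhart polynomials. By Theorem~\ref{Theorem-Introd-Eulor-Ehrhar}, each $\mathcal{S}_{d_j}(m_j)$ contributes
$$m_jt^{d_j}+\sum_{i<d_j}\binom{d_j}{i}t^i,$$
whose leading coefficient can be made arbitrarily large while the others stay fixed. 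The Reeve factor $i(\mathcal{T}_{m_0},t)=\frac{m_0}{6}t^3+t^2+\frac{12-m_0}{6}t+1$ has a negative linear coefficient once $m_0>12$. This produces the leading $-1$ at $t^1$: that coefficient is $\frac{12-m_0}{6}+\sum_j\binom{d_j}{d_j-1}$, which is negative for $m_0$ large.

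The key step is choosing the scales hierarchically, $m_0\ll m_1\ll\cdots\ll m_k$ (or the reverse), with dilation factors where needed, so that each coefficient of $t^r$ is dominated by a single monomial product whose sign is prescribed. Write $D_j=3+d_1+\cdots+d_j$. Near degree $D_j$, the dominant terms should be $(\text{leading terms of factors } 0..j)\times(\text{lower terms of later factors})$, with the block pattern $+,-,\dots,-$ inside each block of length $d_j$.

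The real obstacle is creating the negative entries \emph{inside} a block, because $\mathcal{S}_d(m)$ has only positive coefficients. Negatives can only come from the Reeve factor's $t$-coefficient multiplied against top-degree terms. I would therefore pair the Reeve tetrahedron with each block by dilation: use the dilation $s\,\mathcal{T}_{m_0}$, with polynomial $\frac{m_0}{6}s^3t^3+s^2t^2+\frac{(12-m_0)s}{6}t+1$, which makes the negative $t$-term large relative to the constant term. I would then check that for $t^{D_{j-1}+1}$ the positive product $(\text{lead}_{<j})\cdot m_j$-independent terms dominate, while for the subsequent degrees the negative cross term $\frac{(12-m_0)s}{6}\cdot m_j\cdots$ dominates. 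If this single Reeve factor cannot serve all blocks simultaneously, I would use one Reeve factor per block. That changes the dimension count, so I would instead absorb the extra dimensions by shifting the $d_j$, and use the inductive freedom from Theorem~\ref{thm-eq-Only-Verified}, namely that lower-dimensional patterns are available, to supply the factors.

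Concretely, the work is to expand the product coefficient by coefficient, identify the asymptotically dominant monomial in each degree $r$ as $m_k\to\infty$ first, then $m_{k-1}$, and so on, and verify its sign. This dominance bookkeeping across block boundaries is where I expect the difficulty to lie.
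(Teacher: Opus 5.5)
Your architecture matches the paper's: reduce via Theorem~\ref{thm-eq-Only-Verified}, then take a product of a dilated Reeve tetrahedron with dilated $\mathcal{S}_{d_j}(m_j)$. What is missing is the proof itself, namely an explicit choice of all parameters and a check that every middle coefficient is dominated by a term of the prescribed sign. You defer this step, and the heuristics you offer in its place do not work.

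First, you have the pattern upside down. $\operatorname{Sgn}(\mathcal P)$ starts with $\operatorname{sgn}(c_{d-2})$. With $D=d_1+\cdots+d_k=d-3$, the requirement is: for $1\le h\le D+1$, $c_h>0$ exactly when $h=E_j:=d_j+\cdots+d_k$. In particular $c_{D+1}<0$, and $c_2<0$ since $d_k\ge 3$. A leading $-1$ at $t^1$ followed by $+1$ at $t^2$ is the reversed pattern, which is not what Theorem~\ref{thm-eq-Only-Verified} asks for.

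Second, your scale hierarchy fails without dilations, and you never specify any. Take $m_0\ll m_1\ll\cdots\ll m_k$ with $m_k\to\infty$ first. The leading part of $c_{D-1}$ is $m_2\cdots m_k$ times the coefficient of $t^{d_1-1}$ in $i(\mathcal T_{m_0},t)\sum_{\ell<d_1}\binom{d_1}{\ell}t^\ell$. Its $m_0$-part is $\frac{m_0}{6}\bigl(\binom{d_1}{4}-\binom{d_1}{2}\bigr)$, which is positive for $d_1\ge 7$, yet $c_{D-1}$ must be negative. With the reverse ordering, the leading part of $c_D$ is $\frac{m_0}{6}\bigl(\binom{d_k}{3}-d_k\bigr)m_1\cdots m_{k-1}$. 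This is negative for $d_k=3$, yet $c_D$ must be positive. The cause is that the Reeve $t^3$ and $t^1$ terms carry the same power of $m_0$, so separating the $m_j$ leaves their competition to fixed binomial coefficients. Your fallback of one Reeve factor per block changes the dimension and hence the pattern, so it does not address the reduced cases at all.

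The missing idea is the paper's single-scale bookkeeping. Set $r_i=r^{\alpha_i}$ and $m_i=r^{\beta_i}$ with $\alpha_0=\epsilon/3$, $\beta_0=1-\epsilon/3$, $\alpha_1=\epsilon$, $\alpha_i+\beta_i=1+\epsilon$, and $\alpha_{i+1}=\alpha_i+\beta_i/d_i$. For small $\epsilon$ this gives $\alpha_k<1$. The top term of $r_i\mathcal S_{d_i}(m_i)$ then has $r$-exponent $d_i\alpha_i+\beta_i$.

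Lemma~\ref{lem-greedy} shows that the maximal exponent $W(x)$ in degree $x$ of $\prod_{i\ge1}$ is attained by the greedy decomposition. In it, factors $j+1,\dots,k$ sit at top degree, factor $j$ is partial, and earlier factors are at degree $0$. Hence $W(E_j)-W(E_j-1)=1+\epsilon$, while $W(x)-W(x-1)=\alpha_j\in[\epsilon,1)$ otherwise.

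The Reeve linear term has exponent exactly $\alpha_0+\beta_0=1$, and its $t^2,t^3$ terms have exponents $2\epsilon/3$ and $1+2\epsilon/3$. So at $h=E_j$ the Reeve constant times the top terms of the last factors wins by $\epsilon$. This is not ``leading terms of factors $0,\dots,j$'' as you guessed. At every other middle degree the negative linear term wins; it beats the $t^3$ term because two increments cost at least $2\alpha_1>2\alpha_0$.

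The decisive comparisons are between comparable exponents: $m_0r_0=r$ against $m_ir_i=r^{1+\epsilon}$ and against $r_i^{\ell}=r^{\ell\alpha_i}$. This is exactly the content your proposal leaves open.
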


The construction of the above Eulerian simplices $\mathcal{S}_d(m)$ is a little unexpected. We begin with a general family of simplices (see Section \ref{Section-Four-Detla})
$$\Delta(0,q):=\mathrm{conv}\left\{\mathbf{0}^d,\mathbf{e}_1^d,\ldots,\mathbf{e}_{d-1}^d, n\mathbf{e}_d^d+\sum_{i=1}^{d-1}q_i\mathbf{e}_i \right\}.$$
These simplices can be treated as a generalization of the Reeve tetrahedron. Such simplices have been considered in \cite{HNFsimplex}, where the $q_i$ are restricted to be positive and less than $n$.

We develop a fast algorithm for computing the $h^*$-polynomial of $\Delta(0,q)$. This enhances the search of $\mathcal{S}_d(m)$.
As a \emph{by-product}, $\mathcal{S}_d(m)$ is a simplex family of a counterintuitive phenomenon:
These are simplices whose volume tends to infinity but with constant boundary volume as $m$ increases. See Section \ref{sub-Section-RIP}.

The paper is organized as follows.
In Section \ref{Section-Two-Sign}, we prove Theorem \ref{Theorem-Introdu-Sign-parrt} under the assumption of Theorem \ref{Theorem-Introd-Eulor-Ehrhar},
whose proof occupies the remaining sections. In other words, we resolve the sign pattern problem.
Section \ref{Section-Three-Fundam} provides some results on the fundamental parallelepiped that are used in the following sections.
In Section \ref{Section-Four-Detla}, we introduce the simplices $\Delta(0,q)$, and derive a formula that characterizes their Ehrhart polynomials. Furthermore, we provide a fast algorithm for computing the $h^*$-polynomial of $\Delta(0,q)$.
In Section \ref{Section-Five}, we concentrate on the properties of the $h^*$-polynomials for a special class of simplices in $\Delta(0,q)$. They contain many known simplices as special cases.
Section \ref{Section-Six} is mainly concerned with the construction of the Eulerian simplex family $\mathcal{S}_d(m)$.
Furthermore, we also obtain the closed formula for the Ehrhart polynomial of $\mathcal{S}_d(m)$.
Section \ref{Section-Seven-CR} is a concluding remark.

\section{Sign pattern problem}\label{Section-Two-Sign}

In this section, we will make full use of $\mathcal{S}_d(m)$ to study the sign pattern problem for polytopes.
Before this, we need to introduce some preliminary notions and results.

\subsection{Preliminaries}\label{Section-Two-Sign-1}

Let $\mathcal{P}\subseteq\mathbb{R}^{n}$ and $ \mathcal{Q}\subseteq\mathbb{R}^{m}$ be convex polytopes. Then \emph{Cartesian product} of $\mathcal{P}$ and $\mathcal{Q}$ is the convex polytope defined by
$$\mathcal{P}\times \mathcal{Q}\;=\;\left\{(x,y)\in\mathbb{R}^{n+m}\bigm| x\in \mathcal{P},\; y\in \mathcal{Q}\right\}.$$

If $\mathcal{P}$ and $\mathcal{Q}$ are integral polytopes of dimensions $d_1$ and $d_2$, respectively, then $\mathcal{P}\times \mathcal{Q}$ is an integral polytope of dimension $d_1+d_2$.
Moreover, its Ehrhart polynomial \cite[Lemma 2.3]{Hibi-Higashitani-Tsuchiya-Yoshida} is given by
\begin{align*}
 i(\mathcal{P}\times \mathcal{Q},t)= i(\mathcal{P},t)\cdot  i(\mathcal{Q},t).
\end{align*}

In the proof of our main result, we also need the following obvious formula.
For any positive integer $r$, we have
$$i(r\mathcal{P},t) = i(\mathcal{P},rt).$$

Define the sign function $\mathrm{sgn}(n)$ for an integer $n$ by $\mathrm{sgn}(n) = -1$ if $n < 0$, $\mathrm{sgn}(n) = 0$ if $n = 0$, and $\mathrm{sgn}(n) = +1$ if $n > 0$.

\begin{dfn}
Let $\mathcal{P}$ be an integral convex polytope of dimension $d$ with $i(\mathcal{P},t)=1+\sum_{i=1}^{d}c_it^i$. Then we define the \emph{sign vector} of $\mathcal{P}$ by
$$\operatorname{Sgn}(\mathcal{P}) = (\operatorname{sgn}(c_{d-2}), \ldots, \operatorname{sgn}(c_{1})),$$
where $\operatorname{sgn}$ denotes the sign function.
\end{dfn}

Let $\{d_i\}_{i=1}^k$ be a sequence of integers with $d_i \ge 2$. Note that here $d_k\geq 2$, which differs from the condition $d_k\geq 3$ in Theorem \ref{thm-eq-Only-Verified}.
We consider the Cartesian product of Eulerian simplices and the Reeve tetrahedron as follows:
$$\mathcal{P}:=\left(\prod_{i=1}^k r_i \mathcal{S}_{d_i}(m_i) \right)\times r_0\mathcal{T}_{m_0},$$
where $r_i=r^{\alpha_i}$ and $m_i=r^{\beta_i}$ for $0\leq i\leq k$; $\alpha_i$ and $\beta_i$ are positive rational numbers.

For any sequences of positive rational numbers $(\alpha_i)_{i=0}^k$ and $(\beta_i)_{i=0}^k$, as $r \to +\infty$ along the sequence of perfect $L$-th powers (where $L$ is the least common multiple of the denominators of all $\alpha_i$ and $\beta_i$), both $r_i$ and $m_i$ are positive integers. Consequently, $\mathcal{P}$ is an \textbf{integral} polytope. In what follows, whenever we write $r \to +\infty$, it is implicitly assumed that $r$ approaches infinity along this sequence.

According to Equations \eqref{equ_tetrahedron} and \eqref{Equation-Eula-Sd} (as $r \to +\infty$), we obtain
\begin{align*}
i(r_0\mathcal{T}_{m_0}, t)&=\frac{1}{6}r^{3\alpha_0+\beta_0}t^3+r^{2\alpha_0}t^2+\left(-\frac{1}{6} r^{\alpha_0+\beta_0}+2r^{\alpha_0} \right)t+1,
\\ i(r_i\mathcal{S}_d(m_i),t)&=r^{\beta_i +d_i \alpha_i} t^{d_i} +\sum_{\ell=0}^{d_i-1}\binom{d}{i} r^{\ell \alpha_i} t^{\ell}.
\end{align*}

Since we wish to examine the sign pattern of the coefficients in $i(\mathcal{P},t)$ as $r\to \infty$, we need only consider the highest power of $r$ in the above expression together with its sign. Therefore, we can drop the irrelevant constant coefficients.
For convenience, we define $P_j(t)$ for $j=0,\dots,k$ as the corresponding Ehrhart polynomial:
\begin{align*}
P_0(t) &= i(r_0 \mathcal{T}_{m_0},t) = r^{\beta_0+3\alpha_0} t^3+r^{2\alpha_0}t^2-r^{\alpha_0+\beta_0}t+1,\\
P_i(t) &= i(r_i \mathcal{S}_{d_i}(m_i),t) = r^{\beta_i +d_i \alpha_i} t^{d_i} +\sum_{\ell=0}^{d_i-1} r^{\ell \alpha_i} t^{\ell}, \quad \text{for}\quad 1\leq i\leq k.
\end{align*}
In the next subsection, we study the expansion of the product $P(t) =i(\mathcal P,t)= \left(\prod_{i=1}^k P_i(t)\right)\cdot P_0(t)$.

\subsection{Proof of main result}

Given an arbitrary sequence of integers $d_i \ge 2$, we construct positive rational parameters $\alpha_i, \beta_i$ such that the coefficients of the expanded polynomial product strictly follow a prescribed sign pattern as the scaling parameter $r$ tends to infinity. Specifically, we demonstrate that by employing a delicately weighted assignment, the asymptotic signs can be completely determined by the marginal increments of a greedy decomposition over the degree sequence.

\begin{thm}\label{Thm-main-Sgn-PP0}
For any $d_i \geq 2$ ($1\leq i\leq k$), let $D=d_1+d_2+\cdots+d_k$.
There exist positive rational sequences $(\alpha_i)_{i=0}^k$ and $(\beta_i)_{i=0}^k$ such that
$$P(t)=\left(\prod_{i=1}^k P_i(t) \right)\cdot P_0(t)$$
is a polynomial of degree $D+3$ in $t$ and, as $r\to +\infty$, the sign pattern of $\mathcal P$ satisfies
\begin{align*}
\operatorname{Sgn}(\mathcal P) = (-1, \underbrace{+1, -1, \cdots, -1}_{d_1}, \underbrace{+1, -1, \ldots, -1}_{d_2}, \ldots, \underbrace{+1, -1, \ldots, -1}_{d_k}),
\end{align*}
In other words, the coefficients of $t^{D+3}$, $t^{D+2}$, $t^{d_j+d_2\cdots +d_k}$ for $1\leq j\leq k$, and the constant term in $P(t)$ are positive, while the remaining coefficients are negative.
\end{thm}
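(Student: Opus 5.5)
Throughout, the plan is to work with exponents of $r$ only, in a min-plus (tropical) fashion. Expanding $P(t)=\bigl(\prod_{i=1}^k P_i(t)\bigr)P_0(t)$, each monomial of the coefficient of $t^j$ comes from choosing a degree $\ell_i\in\{0,\dots,d_i\}$ in each $P_i$ and a degree $\ell_0\in\{0,1,2,3\}$ in $P_0$ with $\sum_i\ell_i=j$. Its $r$-exponent is
\[
\sum_{i=0}^k w_i(\ell_i),\qquad w_i(\ell)=\ell\alpha_i\ (\ell<d_i),\quad w_i(d_i)=d_i\alpha_i+\beta_i,
\]
where $w_0(0)=0$, $w_0(1)=\alpha_0+\beta_0$, $w_0(2)=2\alpha_0$ and $w_0(3)=3\alpha_0+\beta_0$. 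Its sign is $-1$ exactly when $\ell_0=1$. The constant factors dropped in passing to the $P_j$ are positive, so they do not affect signs. Hence, as $r\to\infty$, the sign of the coefficient of $t^j$ equals the sign of the unique maximizing choice, provided the maximizer is unique. (Ties with equal signs would also do, but I will aim for uniqueness.)

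It is convenient to measure everything from the top. Write $s=D+3-j$ for the total number of degree drops, and define the cost of a choice as $\sum_i\bigl(w_i(d_i)-w_i(\ell_i)\bigr)$; the dominant term is then the choice of minimal cost. In $P_i$ for $i\ge1$, the first drop costs $\alpha_i+\beta_i$ and each later drop costs $\alpha_i$. In $P_0$, dropping to degree $2$ costs $\alpha_0+\beta_0$ (positive sign), to degree $1$ costs $2\alpha_0$ (negative sign), and to degree $0$ costs $3\alpha_0+\beta_0$ (positive sign).

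The target pattern is produced by a greedy schedule:
\begin{itemize}
\item $s=1$: drop $P_0$ to degree $2$ (positive).
\item $s=2$: drop $P_0$ to degree $1$ (negative).
\item $s=3$: drop $P_0$ to degree $0$ (positive; this is $t^{D}$).
\item Next $d_1-1$ steps: keep $P_0$ at degree $1$ and push $P_1$ down (negative).
\item When $P_1$ is exhausted, $P_0$ goes to degree $0$ and $P_1$ is at degree $0$, giving a positive coefficient at $t^{d_2+\cdots+d_k}$.
\item Repeat the last two items for $P_2$, then $P_3$, and so on.
\end{itemize}
In general, for $s=3+(d_1+\dots+d_{j-1})+u$ with $0\le u\le d_j$, the intended minimizer is the following. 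The factors $P_1,\dots,P_{j-1}$ are fully dropped to $0$. If $u=0$, then $P_0$ is at degree $0$; otherwise $P_0$ is at degree $1$ and $P_j$ is dropped by $u-1$. All later factors are untouched.

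To force this, I would choose $\beta_0>\alpha_0$, so that the negative level beats the positive level $2$ in $P_0$. I would also make the jump costs increase, $\alpha_0+\beta_0<\alpha_1+\beta_1<\alpha_2+\beta_2<\cdots$, with gaps that grow fast (say geometric in a large base). Finally, I would take the $\alpha_i$ tiny compared with all differences of the $\beta$'s. With these choices, opening a new factor $P_i$ is always more expensive than finishing off every earlier factor together with the extra $\alpha_0+\beta_0$ step in $P_0$. That comparison (opening $P_j$ costs $\alpha_j+\beta_j$, versus the $P_0$ switch plus earlier drops) is exactly what makes the step $u=0$ prefer ``$P_0$ at degree $0$'' over ``$P_0$ at degree $1$ and open $P_j$''.

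The main obstacle is proving that the intended choice is the unique minimizer for every $s$. I would argue by exchange. Consider any competing choice. If it opens some $P_i$ with $i>j$, replacing that drop by the still-available cheaper drops loses at least $\beta_i-\beta_{i-1}-O(\max\alpha)$. If it leaves some earlier factor $P_{i'}$ ($i'<j$) only partially dropped while dropping in $P_j$, moving drops back into $P_{i'}$ saves $\beta_j$ at the price of only small $\alpha$'s. Finally, the choice of level of $P_0$ is settled by comparing $2\alpha_0$, $\alpha_0+\beta_0$ and $3\alpha_0+\beta_0$ against the single cheapest remaining factor drop. I expect the delicate part to be writing these inequalities as one explicit choice of rationals (for instance $\beta_i=N^{i+1}$ and $\alpha_i=1/N$ for $i\ge1$, with $\alpha_0=N$ and $\beta_0=N+1$, for $N$ large). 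Care is needed with the top-degree ties ($w_0(2)$ versus $w_0(3)$), and with positivity of the $t^{D+3}$ and $t^{D+2}$ coefficients, both of which hold with a single term.
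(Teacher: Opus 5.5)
Your min-plus bookkeeping (track the top $r$-exponent; the sign is negative exactly when the dominant term has $\ell_0=1$) is the same framework as the paper's. The gap is in the parameter regime. The choices you describe do not produce the target pattern, and your claimed minimizers are not the true ones.

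First, $\alpha_0$ must be small. Take $k=1$, $d_1=4$ and your values $\alpha_0=N$, $\beta_0=N+1$, $\alpha_1=1/N$, $\beta_1=N^2$. The coefficient of $t^3$ in $P_1P_0$ is $r^{4N+1}+r^{2N+1/N}-r^{2N+1+2/N}+r^{3/N}$. For $N>1$ this is positive as $r\to\infty$, but the coefficient of $t^3$ should be negative. The winning choice keeps $P_0$ at degree $3$ and puts two more drops into the already opened factor, costing $2\alpha_1$ instead of your $2\alpha_0$. Comparing levels $1$ and $3$ of $P_0$ trades \emph{two} factor drops, not ``the single cheapest remaining factor drop''. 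So you need $\alpha_0<\alpha_j$ for all $j$; the paper takes $\alpha_0=\epsilon/3<\epsilon=\alpha_1<\alpha_2<\cdots$.

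Second, and independently of $\alpha_0$, strictly increasing jump costs $\alpha_0+\beta_0<\alpha_1+\beta_1<\alpha_2+\beta_2<\cdots$ with negligible $\alpha$'s cannot work. Take $k=2$, $d_1=2$, $d_2=3$, so $D=5$ and the coefficient of $t^2$ must be negative.
\begin{itemize}
\item It contains the positive term $r^{2\alpha_1+\beta_1}$, from $\ell_1=d_1$ and $\ell_0=\ell_2=0$.
\item Its negative terms are only $-r^{\alpha_0+\beta_0+\alpha_1}$ and $-r^{\alpha_0+\beta_0+\alpha_2}$.
\item The exponent difference equals the jump-cost gap $(\alpha_1+\beta_1)-(\alpha_0+\beta_0)$ minus at most $|\alpha_2-\alpha_1|$, which your regime makes positive. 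With your explicit values it is $N^2+2/N>2N+1+1/N$ for $N\ge 3$.
\end{itemize}
Hence $c_2>0$.

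In your cost language, the minimizer at $s=6$ is not your $(\ell_0,\ell_1,\ell_2)=(1,0,1)$ but $(0,2,0)$. It leaves $P_1$ untouched and empties $P_2$, saving $\beta_1+2\alpha_1$ at the price of only $\alpha_0+\beta_0+\alpha_2$. Your exchange step does not rule this out. It only treats partially dropped earlier factors, and shifting drops into $P_1$ here cannot close $P_2$, so no $\beta_2$ is ever saved.

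The missing idea is the paper's calibration:
\begin{itemize}
\item All jump costs are equal, $\alpha_i+\beta_i=1+\epsilon>1=\alpha_0+\beta_0$.
\item The $\alpha_i$ satisfy $\alpha_{i+1}=\alpha_i+\beta_i/d_i$, i.e.\ $w_i(d_i)=d_i\alpha_{i+1}$.
\end{itemize}
Then a full factor $P_i$ is worth no more than $d_i$ interior steps of any later factor, which makes the greedy decomposition optimal (Lemma~\ref{lem-greedy}). Consequently the maximal weight $W(x)$ of $\prod_{i\ge1}P_i$ increases by $1+\epsilon$ exactly at $x=E_j$ and by $\alpha_j<1$ elsewhere. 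Every sign then follows by comparing these increments with $w_0(1)=1$, together with $2\alpha_0<2\alpha_1$.

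A minor point: in your general description, $P_j$ should be dropped by $u+1$, not $u-1$, for the total number of drops to equal $s$.
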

\begin{proof}
The polynomial $P(t)$ has degree $D+3$, and we can write its expansion as
$$P(t) = \sum_{h=0}^{D+3} C_h(r) t^h,$$
Where $r \to +\infty$, and $\alpha_i, \beta_i > 0$ are constants to be determined.
Define the suffix sums $E_j = \sum_{i=j}^k d_i$ for $1 \le j \le k$.
We denote the target index set of positive coefficients as
$$ \mathbf{T} = \{0, D+2, D+3\} \cup \{E_j \mid 1 \le j \le k\}. $$
We need to prove that for any degree sequence $\{d_i\}$, there exists a universal construction of parameters ensuring
$$C_h(r) > 0\quad \text{for}\quad h \in \mathbf{T}\quad \text{and}\quad C_h(r) < 0\quad \text{for}\quad h \notin \mathbf{T}$$
for sufficiently large $r$.

To determine the sign of $C_h(r)$ as $r \to +\infty$, it suffices to track the maximum exponent of $r$ in the expansion, which we define as the \emph{$r$-weight}.
For $1 \le i \le k$, the polynomial $P_i(t)$ contributes a weight $w_i(\ell_i)$ for the term $t^{\ell_i}$ as follow:
\begin{align*}
w_i(\ell_i) =
\begin{cases}
\ell_i \alpha_i, & \text{if}\quad 0 \le \ell_i < d_i \\
 d_i \alpha_i + \beta_i, &\text{if}\quad \ell_i = d_i.
\end{cases}
\end{align*}
Let $X_i = w_i(d_i) = d_i\alpha_i + \beta_i$.
For the polynomial $P_0(t)$, the weights and corresponding signs $s_0(l_0)$ are
\begin{itemize}
    \item $w_0(0) = 0$, $s_0(0) = 1$;
    \item $w_0(1) = \alpha_0 + \beta_0$, $s_0(1) = -1$;
    \item $w_0(2) = 2\alpha_0$, $s_0(2) = 1$;
    \item $w_0(3) = 3\alpha_0 + \beta_0$, $s_0(3) = 1$.
\end{itemize}
Define the maximum weight obtainable from the product $\prod_{i=1}^k P_i(t)$ for a given degree $0\le x \le D$ as
\begin{align*}
W(x) = \max \left\{ \sum_{i=1}^k w_i(\ell_i) \;\middle|\; \sum_{i=1}^k \ell_i = x, \; 0 \le \ell_i \le d_i \right\}.
\end{align*}
The overall maximal weight for degree $h$ is given by maximizing $w_0(\ell_0) + W(h - \ell_0)$ over allowable $\ell_0 \in \{0, 1, 2, 3\}$.

We explicitly construct the parameter sequences $(\alpha_i)_{i=0}^k$ and $(\beta_i)_{i=0}^k$ ensuring the global validity of a greedy decomposition of $x$.
Choose $\epsilon$ such that
\begin{align*}
0< \epsilon \ll \prod_{j=1}^{k-1} \left( 1 - \frac{1}{d_j} \right)< 1\quad (\text{Since}\quad  d_j \ge 2).
\end{align*}
Let $\alpha_0 = \frac{\epsilon}{3}$ and $\beta_0 =1-\frac{\epsilon}{3}$.
Consequently, the weights are
$$w_0(0)=0,\quad w_1(0)=1, \quad w_0(2) =\frac{2 \epsilon}{3}, \quad w_0(3) =1+\frac{2 \epsilon}{3}.$$
Define the parameters $\alpha_i$ and $\beta_i$ for $1 \le i \le k$ recursively:
\begin{align}
& \alpha_1 = \epsilon, \label{eq-alpha-one}\\
& \beta_i = 1 + \epsilon - \alpha_i, \quad (\text{i.e., } \alpha_i + \beta_i = 1 + \epsilon), \label{eq-beta} \\
& \alpha_{i+1} = \alpha_i + \frac{\beta_i}{d_i}, \quad (\text{for } 1\leq i\leq k-1). \label{eq-alpha_rec}
\end{align}

\begin{lem} \label{lem-param-bounds}
The sequence $(\alpha_i)_{i=0}^{k}$ satisfies $0<\alpha_0< \epsilon= \alpha_1 < \alpha_2 < \dots < \alpha_k < 1$.
Furthermore, $1>\beta_i > \epsilon > 0$ for all $1\leq i\leq k$.
\end{lem}
\begin{proof}
Substituting \eqref{eq-beta} into \eqref{eq-alpha_rec}, we have $\alpha_{i+1} = \alpha_i + \frac{1+\epsilon-\alpha_i}{d_i}$, which is equivalent to
\begin{align*}
1+ \epsilon - \alpha_{i+1} = (1 + \epsilon - \alpha_i) \left( 1 - \frac{1}{d_i} \right).
\end{align*}
Iterating this from $i=1$ yields
\begin{align*}
1 + \epsilon - \alpha_k = (1 + \epsilon - \alpha_1) \prod_{j=1}^{k-1} \left( 1 - \frac{1}{d_j} \right).
\end{align*}
Rearranging gives $1 - \alpha_k = \prod_{j=1}^{k-1} \left( 1 - \frac{1}{d_j} \right) - \epsilon$.
By the initial choice of $\epsilon$, $1 - \alpha_k > 0$, implying $\alpha_k < 1$.
Since $\beta_i = 1 + \epsilon - \alpha_i > \epsilon > 0$, all parameters $\beta_i$ are positive and less than $1$.
\end{proof}

Lemma \ref{lem-param-bounds} implies that the sequence $(\alpha_i)$ is strictly monotonically increasing, $(\beta_i)$ is strictly monotonically decreasing, and the maximum weight for the $i$-th component satisfies
$$w_i(d_i) = d_i\alpha_i + \beta_i = d_i\alpha_{i+1}.$$

We define a decomposition $S^* = (\ell_1^*, \dots, \ell_k^*)$ for a degree $x$ to be \emph{greedy} if there exists an $m\in \{1,\ldots,k\}$ such that
$$\ell_i^* = d_i\quad\text{for}\quad i > m,\quad 0 \le \ell_m^* < d_m,\quad \ell_i^* = 0\quad \text{for}\quad i < m, \quad\text{and}\quad \sum_{i=1}^{k}\ell_i^*=x.$$
Let $S = (\ell_1, \dots, \ell_k)$ be an arbitrary valid decomposition satisfying $\sum_{i=1}^k \ell_i = x$.
The total weight of a decomposition $S$ is given by $W(S) = \sum_{i=1}^k w_i(\ell_i)$. The global maximum weight for a fixed sum $x$ is denoted as $W(x) = \max_S W(S)$.
\begin{lem}\label{lem-greedy}
For any integer $x$ such that $0 \le x \le D$, the greedy decomposition $S^*$ realizes the maximum weight, that is, $W(S^*) = W(x) \ge W(S)$ for any valid decomposition $S$.
\end{lem}
\begin{proof}
We now define
\begin{align*}
V = \sum_{i=m+1}^k (d_i - \ell_i) \geq 0.
\end{align*}
Since both decompositions sum to the same total $x$, we have
\begin{align*}
\sum_{i=m+1}^k d_i + \ell_m^* + \sum_{i=1}^{m-1} 0 = \sum_{i=m+1}^k \ell_i + \ell_m + \sum_{i=1}^{m-1} \ell_i.
\end{align*}
Rearranging the terms yields
\begin{align}\label{eq-conservation}
V = \sum_{i=m+1}^k (d_i - \ell_i) = \sum_{i=1}^{m-1} \ell_i + (\ell_m - \ell_m^*).
\end{align}

We consider the three cases as follows.

\textbf{Case I: ($i > m$)}
For each $i > m$, let $\delta_i = d_i - \ell_i \ge 0$.
We have
\begin{align*}
w_i(d_i) - w_i(\ell_i) =
\begin{cases}
0, & \text{if}\quad \delta_i = 0 \\
(d_i\alpha_i + \beta_i) - \ell_i\alpha_i = \delta_i\alpha_i + \beta_i, &\text{if}\quad \delta_i \ge 1.
\end{cases}
\end{align*}
Since $i \ge m+1$, the monotonic increasing property guarantees $\alpha_i \ge \alpha_{m+1} > \alpha_m$.
Recall that $\alpha_i + \beta_i =1+ \epsilon$ for all $i\geq 1$.
Then we obtain
\begin{align*}
\delta_i\alpha_i + \beta_i &= \delta_i\alpha_m + \delta_i(\alpha_i - \alpha_m) + \beta_i\\
&\geq \delta_i\alpha_m+ 1 \cdot (\alpha_i - \alpha_m) + \beta_i \\
&=\delta_i\alpha_m+ 1+\epsilon - \alpha_m
= \delta_i\alpha_m+\beta_m.
\end{align*}
Thus, for any $\delta_i \ge 0$, we have $w_i(d_i) - w_i(\ell_i) \ge \delta_i\alpha_m$. Furthermore, if $V > 0$, there exists at least one $j > m$ such that $\delta_j \ge 1$, which strictly contributes an additional $\beta_m$.
We now employ the characteristic function $\chi$: $\chi(\text{true})=1$ and $\chi(\text{false})=0$.
Therefore, we get
\begin{align}\label{eq-high-order}
\sum_{i=m+1}^k (w_i(d_i) - w_i(\ell_i)) \ge V\alpha_m + \chi(V > 0)\beta_m.
\end{align}

\textbf{Case II: ($i < m$)}
In this case, $\ell_i^* = 0$, so we seek to upper bound $w_i(\ell_i)$.
If $\ell_i < d_i$, then $w_i(\ell_i) = \ell_i\alpha_i$. Since $i \le m-1$, we have $\alpha_i \le \alpha_{m-1} < \alpha_m$. Then we have $w_i(\ell_i) \le \ell_i\alpha_m$.
If $\ell_i = d_i$, then the weight is $w_i(d_i) = d_i\alpha_i + \beta_i = d_i\alpha_{i+1}$. Because $i < m$  (i.e., $i+1 \le m$), it follows that $\alpha_{i+1} \le \alpha_m$. Hence, $w_i(d_i) \le d_i\alpha_m$.
Therefore, we obtain
\begin{align}\label{eq-low-order}
\sum_{i=1}^{m-1} (w_i(\ell_i^*) - w_i(\ell_i)) = -\sum_{i=1}^{m-1} w_i(\ell_i) \ge -\sum_{i=1}^{m-1} \ell_i\alpha_m.
\end{align}

\textbf{Case III: ($i = m$)} The condition $\ell_m^* < d_m$ imply $w_m(\ell_m^*) = \ell_m^*\alpha_m$.
For the arbitrary $\ell_m$, we have
$$w_m(\ell_m) = \ell_m\alpha_m + \chi(\ell_m = d_m)\beta_m.$$
Therefore, we get
\begin{align} \label{eq-pivot}
w_m(\ell_m^*) - w_m(\ell_m) = (\ell_m^* - \ell_m)\alpha_m - \chi(\ell_m = d_m)\beta_m.
\end{align}

Equations \eqref{eq-high-order}, \eqref{eq-low-order}, and \eqref{eq-pivot} establish a lower bound as follow
\begin{align*}
W(S^*) - W(S) &\ge (V\alpha_m + \chi(V > 0)\beta_m) + ((\ell_m^* - \ell_m)\alpha_m - \chi(\ell_m = d_m)\beta_m) - \sum_{i=1}^{m-1} \ell_i\alpha_m
\\ & =\left( V - (\ell_m - \ell_m^*) - \sum_{i=1}^{m-1} \ell_i \right) \alpha_m + (\chi(V > 0) - \chi(\ell_m = d_m))\beta_m.
\end{align*}
By Equation \eqref{eq-conservation}, the inequality become
\begin{align*}
W(S^*) - W(S) \ge (\chi(V > 0) - \chi(\ell_m = d_m))\beta_m.
\end{align*}

It remains to prove that $\chi(V > 0) - \chi(\ell_m = d_m) \ge 0$. It suffices to show the case $\chi(\ell_m = d_m)=1$ and $\chi(V > 0)=0$ cannot happen.
If not, then $V=0$ and $\ell_m = d_m$. Substituting $\ell_m = d_m$ into Equation \eqref{eq-conservation} yields the contradiction:
$$V = \sum_{i=1}^{m-1} \ell_i + (d_m - \ell_m^*)>0.$$

Furthermore, we obtain $W(S^*) \ge W(S)$.
This completes the proof that the greedy strategy guarantees the maximum weight, i.e., Lemma \ref{lem-greedy}.
\end{proof}

Define the marginal increment $\Delta(x) = W(x) - W(x-1)$.
\begin{lem}
For $x = E_j$ with $1\leq j \leq k$, we have $\Delta(E_j) = 1 + \epsilon$.
For $E_{j+1}<x <E_{j}$, we have $\Delta(x) = \alpha_j\le \alpha_k < 1$.
\end{lem}
\begin{proof}
By Lemma \ref{lem-greedy}, when $x = E_j$, we have $\Delta(E_j)=\alpha_j+\beta_j = 1 + \epsilon$.
For $E_{j+1}<x <E_{j}$, the greedy decomposition has $0 < \ell_j^* < d_j$.
Decreasing the degree by 1 removes one unit from $\ell_j^*$, reducing the weight strictly by $\alpha_j$. Lemma \ref{lem-param-bounds} guarantees $\alpha_j \le \alpha_k < 1$.
\end{proof}

We define the weights combining $P_0(t)$ and $\prod P_i(t)$ as $W_{\ell_0}(h) = w_0(\ell_0) + W(h - \ell_0)$, where $0\leq \ell_0\leq 3$.
It is clear that when $r\to +\infty$, the sign (positive or negative) of the coefficient $C_h(r)$ of polynomial $P(t)$ is determined by the $\max_{\ell_0} W_{\ell_0}(h)$.
If the corresponding $\ell_0$ of $\max_{\ell_0} W_{\ell_0}(h)$ equals $1$, then $C_h(r)<0$ as $r\to +\infty$; otherwise $C_h(r)>0$ as $r\to +\infty$.

We now consider the sign of $C_h(r)$ according to the following four cases.

\textbf{Case 1: $h = E_j$ with $1\leq j \leq k$.}
According to the
$$ W_0(E_j) - W_1(E_j) = W(E_j) -(\alpha_0+\beta_0 + W(E_j-1)) = \Delta(E_j) - 1 = \epsilon > 0,$$
we have $C_h(r)>0$ as $r\to +\infty$.

\textbf{Case 2: $h \in\{D+3,D+2,0\}$.}
\begin{itemize}
    \item When $h=D+3$, only $\ell_0=3$ with $x=D$ is valid. The coefficient $C_h(r)$ is positive as $r\to +\infty$.
    \item When $h=D+2$, the valid $\ell_0 \in \{2, 3\}$. The coefficient $C_h(r)$ is positive as $r\to +\infty$.
    \item When $h=0$, the constant term $C_0(r)$ is always $1$.
\end{itemize}

For $h \notin \mathbf{T}$, we must prove $W_1(h)$ strictly dominates all legal positive components $W_0(h)$, $W_2(h)$, $W_3(h)$.

\textbf{Case 3: $1 \le h \le D$ and $h \neq E_j$.}
\begin{itemize}
    \item $\ell_0=1$ versus $\ell_0=0$: We have
          $$W_1(h) - W_0(h) = \alpha_0+\beta_0 + W(h-1) - W(h) = 1 - \Delta(h) \ge 1 - \alpha_k > 0.$$
    \item $\ell_0=1$ versus $\ell_0=2$: We have
          $$W_1(h) - W_2(h) = \alpha_0+\beta_0 + W(h-1) - (2\alpha_0 + W(h-2)) = \beta_0-\alpha_0 + \Delta(h-1).$$
          Since $\Delta(h-1) \ge \alpha_1 > 2\alpha_0$, we get $W_1(h) - W_2(h) > 0$.
    \item $\ell_0=1$ versus $\ell_0=3$: We have
          $$W_1(h) - W_3(h) = \alpha_0+\beta_0 + W(h-1) - (3\alpha_0+\beta_0 + W(h-3)) = \Delta(h-1) + \Delta(h-2) - 2\alpha_0.$$ Since $\Delta(h-1)\geq \alpha_1$ and $\Delta(h-2)\geq \alpha_1$, we obtain $W_1(h) - W_3(h)\geq 2(\alpha_1-\alpha_0)>0$.
\end{itemize}
Thus, in this case, $C_h(r)<0$ holds as $r\to +\infty$.

\textbf{Case 4: $h = D+1$.} The negative component is $W_1(D+1) = \alpha_0+\beta_0 + W(D)$.
\begin{itemize}
    \item $\ell_0=1$ versus $\ell_0=2$: We obtain
          $$W_1(h) - W_2(h) = \alpha_0+\beta_0 + W(D) - (2\alpha_0 + W(D-1)) =1-\frac{2\epsilon}{3}+1+\epsilon > 0.$$
    \item $\ell_0=1$ versus $\ell_0=3$: We have
          \begin{align*}
          W_1(h) - W_3(h) = 1 + W(D) - (3\alpha_0+\beta_0 + W(D-2)) &= -2\alpha_0+W(D)-W(D-2)
          \\ &=-2\alpha_0+\beta_1+2\alpha_1>0.
          \end{align*}
\end{itemize}
Hence, in this case, we have $C_{D+1}(r)<0$ as $r\to +\infty$.

This completes the proof of Theorem \ref{Thm-main-Sgn-PP0}.
\end{proof}

Clearly, through Theorems~\ref{thm-eq-Only-Verified} and~\ref{Thm-main-Sgn-PP0}, we have proved Open Problem \ref{QuestionHHTY}, namely:
Given a positive integer $d\geq 3$ and integers $1 \leq i_1 < \cdots < i_q \leq d-2$, there exists a $d$-dimensional integral polytope $\mathcal{P}$ such that the coefficients of $t^{i_1}, \ldots, t^{i_q}$ in $i(\mathcal{P}, t)$ are negative, while the remaining coefficients are positive. Consequently, Theorem \ref{Theorem-Introdu-Sign-parrt} holds.

\section{Fundamental parallelepiped}\label{Section-Three-Fundam}

In solving the above sign pattern problem, an important tool is the discovery of the simplex family $\mathcal{S}_d(m)$. Before describing this discovery, we first introduce some necessary concepts concerning simplices and polytopes.

A $d$-dimensional convex polytope with exactly $d+1$ vertices is called a $d$-dimensional \emph{simplex}.
For example, the $3$-dimensional simplices are the tetrahedra.
The \emph{cone over $\mathcal{P}$} is defined as
$$\mathrm{cone}(\mathcal{P}) = \{ (\lambda, x) \in \mathbb{R}_{\geq 0} \times \mathbb{R}^d : x \in \lambda \mathcal P\}.$$

Let $\mathcal{P}$ be a $d$-dimensional integral simplex with vertices $v_0, v_1, \ldots, v_d$. The \emph{fundamental parallelepiped} of $\mathrm{cone}(\mathcal{P})$ is
$$\Pi(\mathrm{cone}(\mathcal{P})):=\{ \lambda_0(1,v_0)+\lambda_1(1,v_1)+\cdots +\lambda_d(1,v_d) : \lambda_i\in [0,1)\},$$
and its Ehrhart series \cite[Lemma 3.10]{BeckRobins} is given by
\begin{align}\label{formual--cone(P)hstar}
 \mathrm{Ehr}(\mathcal{P}, x) = \frac{\sum_{\omega \in \Pi(\mathrm{cone}(\mathcal{P}))\cap \mathbb{Z}^{d+1}} x^{\omega_0}}{(1-x)^{d+1}},
\end{align}
where $\omega_0$ is the first coordinate of $\omega=(\omega_0,\omega_1,\ldots,\omega_d)$.

The concept of a fundamental parallelepiped can be viewed within a broader algebraic framework.
For a nonsingular matrix $A \in \mathbb{Z}^{d \times d}$, the \emph{lattice generated by $A$} is defined as
$\mathcal{L}(A) =\{Ax : x\in\mathbb{Z}^d\}$.
The \emph{fundamental parallelepiped} of the lattice $\mathcal{L}(A)$ is defined as
$$\Pi(A) = \{A\xi : \xi \in [0,1)^d\}.$$

\begin{prop}{\em \cite[Theorem 2.5]{barvinok2025course}}\label{Properties of Lattices and Fundamental Parallelepipeds}
Let $A \in \mathbb{Z}^{d \times d}$ be a nonsingular matrix and $\mathcal{L}(A)$ the lattice generated by $A$. Then we have
\begin{enumerate}
    \item The quotient group $\mathbb{Z}^d/\mathcal{L}(A)$ is finite.
    \item The number of elements in the quotient group $\mathbb{Z}^d/\mathcal{L}(A)$ equals $|\det(A)|$.
    \item The set $\Pi(A)\cap\mathbb{Z}^d$  forms a complete set of representatives for the quotient group $\mathbb{Z}^d/\mathcal{L}(A)$.
\end{enumerate}
\end{prop}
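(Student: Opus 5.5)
The three assertions are the standard structure theory of a full-rank sublattice; the plan is to reduce everything to the Smith normal form of $A$.

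First, there are unimodular matrices $U,V\in\mathrm{GL}_d(\mathbb{Z})$ with $UAV=D=\mathrm{diag}(a_1,\dots,a_d)$ and positive integers $a_i$. Since $V$ is unimodular, $\mathcal{L}(AV)=\mathcal{L}(A)$. Multiplication by $U$ is an automorphism of $\mathbb{Z}^d$ carrying $\mathcal{L}(A)$ onto $\mathcal{L}(D)=a_1\mathbb{Z}\oplus\cdots\oplus a_d\mathbb{Z}$. Hence
$$\mathbb{Z}^d/\mathcal{L}(A)\cong \bigoplus_{i=1}^d \mathbb{Z}/a_i\mathbb{Z},$$
which is finite of order $\prod a_i=|\det D|=|\det A|$. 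This gives parts (1) and (2).

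For part (3), I would argue directly rather than through the Smith form, since $\Pi(A)$ is not preserved by $U$ and $V$.

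\emph{Every class has a representative in $\Pi(A)$.} Given $z\in\mathbb{Z}^d$, write $z=A\xi$ with $\xi=A^{-1}z\in\mathbb{Q}^d$. Split $\xi=\lfloor\xi\rfloor+\{\xi\}$ componentwise. Then $z-A\lfloor\xi\rfloor=A\{\xi\}$ is an integer vector, because both $z$ and $A\lfloor\xi\rfloor$ are integral. It lies in $\Pi(A)$ and is congruent to $z$ modulo $\mathcal{L}(A)$.

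\emph{Representatives in $\Pi(A)$ are unique.} Suppose $A\xi$ and $A\xi'$ are integral points of $\Pi(A)$ in the same class. Then $A(\xi-\xi')=Ay$ for some $y\in\mathbb{Z}^d$. Since $A$ is nonsingular, $\xi-\xi'=y$. Each coordinate of $\xi-\xi'$ lies in $(-1,1)$ and is an integer, so $y=0$ and the two points coincide.

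This proves (3). It also gives the cross-check $\#(\Pi(A)\cap\mathbb{Z}^d)=|\det A|$.

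The main obstacle is the existence of the Smith normal form, which I would quote as standard: it follows from the structure theorem for finitely generated modules over the PID $\mathbb{Z}$, or from row and column reduction by the Euclidean algorithm.

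To avoid that dependence for part (2), an alternative is available. By part (3), $|\mathbb{Z}^d/\mathcal{L}(A)|=\#(\Pi(A)\cap\mathbb{Z}^d)$. Now count lattice points in a large cube $[0,N)^d$, which is tiled up to boundary effects by translates $\Pi(A)+v$ with $v\in\mathcal{L}(A)$. There are roughly $N^d/|\det A|$ such translates, each containing the same number of integer points. Comparing with $N^d$ and letting $N\to\infty$ yields $\#(\Pi(A)\cap\mathbb{Z}^d)=|\det A|$, which also reproves (1). The boundary-error estimate in this volume argument is the only delicate point, and it is routine.
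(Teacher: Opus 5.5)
The paper gives no proof of this proposition; it is quoted from Barvinok's text, so there is no internal argument to compare yours against. Your proof is correct and complete, granting the existence of Smith normal form.

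\emph{Parts (1) and (2).} The relation $UAV=D$ identifies $\mathbb{Z}^d/\mathcal{L}(A)$ with $\bigoplus_i \mathbb{Z}/a_i\mathbb{Z}$, which is finite of order $|\det A|$.

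\emph{Part (3).} This part is self-contained. For existence you use only the reduction $z\mapsto z-A\lfloor A^{-1}z\rfloor$. For uniqueness you use only the fact that $(-1,1)^d\cap\mathbb{Z}^d=\{0\}$.

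These are exactly the two devices the paper uses right afterwards. The passage $\xi\mapsto\xi-\lfloor\xi\rfloor$ drives Lemma~\ref{lem-SNF-of-Pi(cone)} and the proof of Theorem~\ref{thm-Delta}. There, $AV=\mathrm{diag}(1,\ldots,1,n)$ plays the role of your diagonal form. Your remark that $\Pi(A)$ is not preserved by $U$ and $V$ is precisely why that lemma is needed.

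\emph{Volume-counting alternative.} This route to (2) is also sound as sketched. The translates $\Pi(A)+v$ with $v\in\mathcal{L}(A)$ tile $\mathbb{R}^d$. Each contains the same number of integer points. Only $O(N^{d-1})$ of them meet the boundary of $[0,N)^d$. Note, however, that this route additionally needs $\mathrm{vol}\,\Pi(A)=|\det A|$, whereas the Smith normal form route needs no volume input.
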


The following lemma appears in \cite{Barvinok1993}, but without a detailed proof. Here we provide one.

\begin{lem}{\em \cite{Barvinok1993}}\label{lem-SNF-of-Pi(cone)}
Let $A, B \in \mathbb{Z}^{d \times d}$ be two nonsingular matrices such that $B=UAV$ for some unimodular matrices $U, V \in \mathbb{Z}^{d \times d}$.
Let
\begin{align*}
\Pi(A)\cap \mathbb{Z}^d = \{A\xi \cap \mathbb{Z}^d : \xi \in [0,1)^d\} \quad \text{and}\quad
\Pi(B)\cap \mathbb{Z}^d = \{B\xi \cap \mathbb{Z}^d : \xi \in [0,1)^d\}.
\end{align*}
Then
\begin{align*}
\Pi(A) \cap \mathbb{Z}^d= \{A(VB^{-1}b - \lfloor VB^{-1}b \rfloor) : b \in \Pi(B)\cap \mathbb{Z}^d\}.
\end{align*}
\end{lem}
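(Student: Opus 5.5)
The plan is to build an explicit bijection between $\Pi(B)\cap\mathbb{Z}^d$ and $\Pi(A)\cap\mathbb{Z}^d$ through the quotient groups $\mathbb{Z}^d/\mathcal{L}(B)$ and $\mathbb{Z}^d/\mathcal{L}(A)$. By Proposition \ref{Properties of Lattices and Fundamental Parallelepipeds}, each of these sets is a complete set of representatives for its quotient group. Both groups have order $|\det A| = |\det B|$, since $U,V$ are unimodular.

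First, observe that $\mathcal{L}(AV)=\mathcal{L}(A)$ because $V\mathbb{Z}^d=\mathbb{Z}^d$. Also $B=UAV$, so $U^{-1}$ maps $\mathcal{L}(B)$ isomorphically onto $\mathcal{L}(AV)=\mathcal{L}(A)$. Since $U^{-1}$ preserves $\mathbb{Z}^d$, the map $b\mapsto U^{-1}b$ induces a group isomorphism $\mathbb{Z}^d/\mathcal{L}(B)\to\mathbb{Z}^d/\mathcal{L}(A)$.

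Next, fix $b\in\Pi(B)\cap\mathbb{Z}^d$ and set $\eta=VB^{-1}b\in\mathbb{Q}^d$. Then
\[
A\eta = AVB^{-1}b = AV(UAV)^{-1}b = U^{-1}b,
\]
which is an integer vector. Define $\xi=\eta-\lfloor\eta\rfloor\in[0,1)^d$. Then
\[
A\xi = U^{-1}b - A\lfloor\eta\rfloor,
\]
which is an integer vector lying in $\Pi(A)$ and congruent to $U^{-1}b$ modulo $\mathcal{L}(A)$. So $A\xi$ is exactly the unique representative in $\Pi(A)\cap\mathbb{Z}^d$ of the class of $U^{-1}b$.

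Finally, injectivity follows from the isomorphism. Distinct $b,b'\in\Pi(B)\cap\mathbb{Z}^d$ lie in distinct classes of $\mathbb{Z}^d/\mathcal{L}(B)$, so $U^{-1}b$ and $U^{-1}b'$ lie in distinct classes modulo $\mathcal{L}(A)$. Their representatives in $\Pi(A)$ are therefore distinct. The two sets have the same cardinality $|\det A|$, so the map is a bijection onto $\Pi(A)\cap\mathbb{Z}^d$, which gives the claimed equality of sets.

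The main obstacle is the class-tracking step: verifying that reducing $\eta$ modulo $1$ corresponds to subtracting an element of $\mathcal{L}(A)$, and that the class is preserved. This is the identity $A\lfloor\eta\rfloor\in\mathcal{L}(A)$ combined with the uniqueness part (item 3) of Proposition \ref{Properties of Lattices and Fundamental Parallelepipeds}.
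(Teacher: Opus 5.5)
Your proposal is correct and follows essentially the same route as the paper: the paper uses the isomorphism $\phi(b+\mathcal{L}(B)) = AVB^{-1}b+\mathcal{L}(A)$ and verifies it with an explicit inverse. Your observation that $AVB^{-1}=U^{-1}$ makes well-definedness and bijectivity immediate, and the reduction $\xi=\eta-\lfloor\eta\rfloor$, the uniqueness of representatives in $\Pi(A)\cap\mathbb{Z}^d$, and the injectivity-plus-cardinality argument match (and slightly make explicit) the paper's conclusion.
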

\begin{proof}
Let $G_1 = \mathbb{Z}^d / \mathcal{L}(B)$ and $G_2 = \mathbb{Z}^d / \mathcal{L}(A)$. By Proposition~\ref{Properties of Lattices and Fundamental Parallelepipeds}, $\Pi(B) \cap \mathbb{Z}^d$ and $\Pi(A) \cap \mathbb{Z}^d$ are complete sets of representatives for $G_1$ and $G_2$, respectively. Moreover, $|G_1| = |\det(B)| = |\det(A)| = |G_2|$.

Define a map $\phi: G_1 \to G_2$ by
\[
\phi(b + \mathcal{L}(B)) = A V B^{-1} b + \mathcal{L}(A).
\]

\begin{enumerate}
  \item $\phi$ is a well-defined group homomorphism.
If $b_1 - b_2 \in \mathcal{L}(B)$, then $b_1 - b_2 = B z$ for some $z \in \mathbb{Z}^d$. Thus
\[
A V B^{-1}(b_1 - b_2) = A V z \in \mathcal{L}(A),
\]
so $\phi(b_1 + \mathcal{L}(B)) = \phi(b_2 + \mathcal{L}(B))$. The homomorphism property is immediate from linearity.
  \item  $\phi$ is a group isomorphism.
Define $\psi: G_2 \to G_1$ by
\[
\psi(a + \mathcal{L}(A)) = B V^{-1} A^{-1} a + \mathcal{L}(B).
\]
It is straightforward to verify that $\psi \circ \phi = \text{id}_{G_1}$ and $\phi \circ \psi = \text{id}_{G_2}$.
\end{enumerate}
Finally, for any $b \in \Pi(B) \cap \mathbb{Z}^d$, write
\[
V B^{-1} b = \left\lfloor V B^{-1} b \right\rfloor + \xi, \quad \xi \in [0,1)^d.
\]
Then
\[
\phi(b + \mathcal{L}(B)) = A \xi + \mathcal{L}(A),
\]
which means $A\xi \in \Pi(A) \cap \mathbb{Z}^d$. This completes the proof.
\end{proof}

\section{Simplex family: $\Delta(0,q)$}\label{Section-Four-Detla}

\subsection{Formula for the $h^*$-Polynomials}

We now introduce a special class of simplices, denoted by $\Delta(0,q)$, which exhibits numerous elegant properties. Most notably, this class of simplices is central to the study of Ehrhart sign pattern problems.
\begin{dfn}
Let $q=(q_1,\ldots,q_{d-1},n) \in \mathbb{Z}^d$ with $n\neq 0$, we define the simplex
\begin{align}\label{define--Delat(0,q)}
\Delta(0,q):=\mathrm{conv}\left\{\mathbf{0},\mathbf{e}_1,\ldots,\mathbf{e}_{d-1}, n\mathbf{e}_d+\sum_{i=1}^{d-1}q_i\mathbf{e}_i \right\},
\end{align}
where $\mathbf{e}_i$ denote the $i$-th unit coordinate vector of $\mathbb{R}^d$ for $1\leq i\leq d$; and $\mathbf{0}$ denote the origin of $\mathbb{R}^d$.
\end{dfn}

Similar families of simplices have been studied in \cite{HNFsimplex} and \cite{Braun-Davis-Solus}.
However, the simplices here are slightly different from theirs.

\begin{thm}\label{thm-Delta}
The $h^*$-polynomial of $\Delta(0,q)$ is given by
\begin{align*}
h_{\Delta(0,q)}^*(x)=\sum_{j=0}^{n-1}x^{\left\lceil{\frac{q_1 j}{n}}\right\rceil+\cdots+\left\lceil{\frac{q_d j}{n}}\right\rceil}
\quad \text{ with }\quad q_d=1-\sum_{i=1}^{d-1}q_i.
\end{align*}
\end{thm}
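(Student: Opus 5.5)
We will read the $h^*$-polynomial off the lattice points of the fundamental parallelepiped via \eqref{formual--cone(P)hstar}. First we reduce to $n>0$. If $n<0$, the reflection $x_d\mapsto -x_d$ is unimodular, so it preserves the Ehrhart polynomial. It maps $\Delta(0,q)$ to $\Delta(0,q')$ with $q'=(q_1,\ldots,q_{d-1},-n)$, and it leaves $q_d=1-\sum_{i<d}q_i$ unchanged. So from now on we assume $n>0$, which is the case in which the sum $\sum_{j=0}^{n-1}$ in the statement has $n$ terms.

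The vertices of $\Delta(0,q)$ lifted to height $1$ are $(1,\mathbf{0})$, $(1,\mathbf{e}_i)$ for $1\le i\le d-1$, and $(1,v)$ with $v=(q_1,\ldots,q_{d-1},n)$. A point of $\Pi(\mathrm{cone}(\Delta(0,q)))$ has the form
\[
\omega=\lambda_0(1,\mathbf{0})+\sum_{i=1}^{d-1}\lambda_i(1,\mathbf{e}_i)+\lambda_d(1,v),\qquad \lambda_i\in[0,1).
\]
Its coordinates are as follows:
\begin{itemize}
\item the height is $\omega_0=\lambda_0+\lambda_1+\cdots+\lambda_d$;
\item for $1\le i\le d-1$, the $i$-th spatial coordinate is $\lambda_i+\lambda_d q_i$;
\item the last coordinate is $\lambda_d n$.
\end{itemize}

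We now impose integrality one coordinate at a time. The condition $\lambda_d n\in\mathbb{Z}$ with $\lambda_d\in[0,1)$ forces $\lambda_d=j/n$ for a unique $j\in\{0,\ldots,n-1\}$. Given $j$, for each $i<d$ the condition $\lambda_i+jq_i/n\in\mathbb{Z}$ with $\lambda_i\in[0,1)$ determines $\lambda_i$ uniquely:
\[
\lambda_i=\left\lceil \frac{jq_i}{n}\right\rceil-\frac{jq_i}{n}.
\]
Finally, integrality of the height determines $\lambda_0\in[0,1)$ uniquely. Using $\sum_{i<d}q_i=1-q_d$ we get
\[
\sum_{i=1}^{d-1}\lambda_i+\frac{j}{n}=\sum_{i=1}^{d-1}\left\lceil\frac{jq_i}{n}\right\rceil+\frac{jq_d}{n},
\]
so integrality of $\omega_0$ forces
\[
\lambda_0=\left\lceil \frac{jq_d}{n}\right\rceil-\frac{jq_d}{n}.
\]
Adding everything up, the height of the lattice point indexed by $j$ is $\omega_0=\sum_{i=1}^{d}\lceil jq_i/n\rceil$.

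Conversely, every $j\in\{0,\ldots,n-1\}$ gives parameters $\lambda_0,\ldots,\lambda_d\in[0,1)$ and hence a lattice point of the parallelepiped. Distinct $j$ give distinct $\lambda_d$, so these points are pairwise distinct. This yields exactly $n=|\det|$ points, which is consistent with Proposition~\ref{Properties of Lattices and Fundamental Parallelepipeds}. Substituting into \eqref{formual--cone(P)hstar} gives the stated formula for $h^*_{\Delta(0,q)}(x)$.

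The only delicate points are the sign convention for $n$ and the ceiling/fractional-part bookkeeping for $\lambda_0$, where the identity $q_d=1-\sum_{i<d}q_i$ is used. Neither should be a serious obstacle.
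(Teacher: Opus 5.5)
Your proof is correct, and it reaches the lattice points of $\Pi(\mathrm{cone}(\Delta(0,q)))$ by a more direct route than the paper.

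The paper builds a unimodular $V$ with $AV=\mathrm{diag}\{1,\ldots,1,n\}$. It then invokes the transfer Lemma~\ref{lem-SNF-of-Pi(cone)} to pull back the obvious lattice points of the diagonal parallelepiped. This gives the points $A(\xi_j-\lfloor\xi_j\rfloor)$ with $\xi_j=\frac{j}{n}(-q_d,-q_1,\ldots,-q_{d-1},1)^T$.

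You instead solve the integrality conditions one coordinate at a time. This works because the matrix of lifted vertices has identity columns except for the last one, and its only non-unit diagonal entry is $n$. Your coefficient vector $(\lambda_0,\lambda_1,\ldots,\lambda_d)$ is exactly $\xi_j-\lfloor\xi_j\rfloor$, so the two computations agree formula for formula.

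What your route buys:
\begin{itemize}
\item It is self-contained: you need neither the transfer lemma nor Proposition~\ref{Properties of Lattices and Fundamental Parallelepipeds}.
\item Completeness comes from the uniqueness of each $\lambda_i$ in $[0,1)$, and distinctness from the fact that the last coordinate of the point indexed by $j$ is $j$ itself.
\item It explicitly handles $n<0$ through the reflection $x_d\mapsto -x_d$, a case the paper's statement quietly excludes, since the sum $\sum_{j=0}^{n-1}$ only makes sense for $n>0$.
\end{itemize}

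The paper's route, in exchange, is the systematic template: via a Smith normal form it extends to simplices whose vertex matrix has no such convenient near-identity shape.
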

\begin{proof}
We construct a nonsingular matrix $A$ and a unimodular matrix $V$ as follows:
\begin{align*}
A = \begin{pmatrix}
1 & 1 & \cdots & 1 & 1 \\
0 & 1 & \cdots & 0 & q_1 \\
\vdots & \vdots & \ddots & \vdots & \vdots \\
0 & 0 & \cdots & 1 & q_{d-1} \\
0 & 0 & \cdots & 0 & n
\end{pmatrix}, \quad
V = \begin{pmatrix}
1 & -1 & \cdots & -1 & -q_d \\
0 & 1 & \cdots & 0 & -q_1 \\
\vdots & \vdots & \ddots & \vdots & \vdots \\
0 & 0 & \cdots & 1 & -q_{d-1} \\
0 & 0 & \cdots & 0 & 1
\end{pmatrix}.
\end{align*}
Direct computation yields $AV = \mathrm{diag}\{1, \ldots, 1, n\}$. By Lemma~\ref{lem-SNF-of-Pi(cone)} (here, $B=AV$), we obtain
\begin{align*}
&\Pi(\mathrm{cone}(\Delta(0,q)))\cap\mathbb{Z}^{d+1}=\Pi(A)\cap\mathbb{Z}^{d+1}
\\=& \left\{ A(\xi_j - \lfloor \xi_j \rfloor) :
\xi_j = V\mathrm{diag}\left\{0, \ldots, 0, \frac{j}{n}\right\}
= \frac{j}{n} \begin{pmatrix}
-q_d \\ -q_1 \\ \vdots \\ -q_{d-1} \\ 1
\end{pmatrix}, \ j = 0, \ldots, n-1 \right\}.
\end{align*}
We observe that the first coordinates of the vectors $(A\xi_j)$ and $(-A\lfloor \xi_j \rfloor)$, denoted by $(A\xi_j)_0$ and $(-A\lfloor \xi_j \rfloor)_0$ respectively, satisfy $(A\xi_j)_0 = 0$ and
\begin{align*}
(-A\lfloor \xi_j \rfloor)_0
&= -\left(\left\lfloor \frac{-q_d j}{n} \right\rfloor +\left\lfloor \frac{-q_1 j}{n}\right\rfloor+\cdots
+\left\lfloor \frac{-q_{d-1} j}{n}\right\rfloor+\left\lfloor \frac{j}{n}\right\rfloor \right)
\\&= -\left( \left\lfloor \frac{j}{n} \right\rfloor + \sum_{i=1}^{d} \left\lfloor \frac{-q_i j}{n} \right\rfloor \right)
= \sum_{i=1}^{d} \left\lceil \frac{q_i j}{n} \right\rceil.
\end{align*}
The conclusion now follows from Equation~\eqref{formual--cone(P)hstar}.
\end{proof}

We now consider the $h^*$-polynomials for three special cases of $\Delta(0,q)$.

\begin{cor}
Let $a,s\in\mathbb{Z}_{\geq 0}$.
Let $d = 2k - 1$ for $k \geq 2$, and consider the $d$-dimensional simplex defined as
$$\mathcal{R}^{\mathrm{odd}}_{s,k,a}:= \mathrm{conv}\{0, \mathbf e_1, \ldots, \mathbf e_{d-1}, (\underbrace{a, \ldots, a}_{k-1} , \underbrace{-a, \ldots, -a}_{k-1}, s + 1)\} \subset \mathbb{R}^d,$$
The $h^*$-polynomial of this simplex is given by
\begin{align*}
h^*_{\mathcal{R}^{\mathrm{odd}}_{s,k,a}}(x) =(s+1-b)x^k+(b-1)x+1, \quad \text{where}\quad b = \gcd(a, s+1).
\end{align*}
\end{cor}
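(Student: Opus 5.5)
Apply Theorem~\ref{thm-Delta} with $n=s+1$, $q_1=\cdots=q_{k-1}=a$ and $q_k=\cdots=q_{2k-2}=-a$. The sum of the first $d-1=2k-2$ coordinates is $0$, so $q_d=1$. The theorem then gives
\begin{align*}
h^*_{\mathcal{R}^{\mathrm{odd}}_{s,k,a}}(x)=\sum_{j=0}^{s} x^{e(j)},\qquad e(j)=(k-1)\left(\left\lceil \frac{aj}{s+1}\right\rceil+\left\lceil \frac{-aj}{s+1}\right\rceil\right)+\left\lceil \frac{j}{s+1}\right\rceil .
\end{align*}
The remaining task is to evaluate $e(j)$ for each $0\le j\le s$.

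\textbf{The case $j=0$.} Here $e(0)=0$, which gives the constant term $1$.

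\textbf{The case $1\le j\le s$.} Since $0<j/(s+1)<1$, we have $\lceil j/(s+1)\rceil=1$. For any real $y$, $\lceil y\rceil+\lceil -y\rceil$ equals $0$ if $y\in\mathbb{Z}$ and $1$ otherwise. Hence
\[
e(j)=1 \text{ if } (s+1)\mid aj, \qquad e(j)=k \text{ otherwise}.
\]

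\textbf{Counting.} Write $b=\gcd(a,s+1)$. Then $(s+1)\mid aj$ exactly when $\frac{s+1}{b}\mid j$. The indices $j\in\{1,\dots,s\}$ satisfying this are $j=t\frac{s+1}{b}$ for $t=1,\dots,b-1$, so there are $b-1$ of them.

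If $a=0$, then $b=s+1$ and every $j$ satisfies the divisibility condition. The count is $s=b-1$, so the formula still holds.

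The remaining $s-(b-1)=s+1-b$ indices contribute $x^k$.

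\textbf{Conclusion.} Adding the three contributions gives $(s+1-b)x^k+(b-1)x+1$, as claimed.

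The only step needing care is the identity for $\lceil y\rceil+\lceil -y\rceil$ together with the divisibility count. The degenerate case $a=0$ is the one point to check separately, and it is handled above. The rest is immediate from Theorem~\ref{thm-Delta}.
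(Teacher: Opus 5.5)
Your proof is correct and is essentially the paper's argument: apply Theorem~\ref{thm-Delta} with $n=s+1$ and $q_d=1$, observe that for $1\le j\le s$ the exponent is $(k-1)\bigl(\lceil \tfrac{aj}{s+1}\rceil-\lfloor \tfrac{aj}{s+1}\rfloor\bigr)+1\in\{1,k\}$, and count the $b-1$ indices $j$ that are multiples of $(s+1)/b$. Your separate check of $a=0$ is harmless but not needed, since the criterion $(s+1)\mid aj \iff \frac{s+1}{b}\mid j$ already covers that case.
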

\begin{proof}
By Theorem~\ref{thm-Delta}, we have
\begin{align*}
h^*_{\mathcal{R}^{\mathrm{odd}}_{s,k,a}}(x) &= \sum_{i=0}^{s}x^{\left\lceil{\frac{ai}{s+1}}\right\rceil+\cdots+\left\lceil{\frac{ ai}{s+1}}\right\rceil+\left\lceil{\frac{-ai}{s+1}}\right\rceil+\cdots+\left\lceil{\frac{-a i}{s+1}}\right\rceil+\left\lceil{\frac{(1-(k-1)a+(k-1)a) i}{s+1}}\right\rceil}
\\&=1+\sum_{i=1}^{s} {x^{(k-1)\left(\left\lceil{\frac{ai}{s+1}}\right\rceil-\left\lfloor{\frac{ai}{s+1}}\right\rfloor\right)} x^{\left\lceil{\frac{i}{s+1}}\right\rceil}}
\\&=1+\sum_{i=1 \atop (s+1) \mid ai}^{s} x^{(k-1)\left(\left\lceil{\frac{ai}{s+1}}\right\rceil-\left\lfloor{\frac{ai}{s+1}}\right\rfloor\right)}\cdot x
+\sum_{i=1\atop (s+1) \nmid ai }^{s} x^{(k-1)\left(\left\lceil{\frac{ai}{s+1}}\right\rceil-\left\lfloor{\frac{ai}{s+1}}\right\rfloor\right)}\cdot x.
\end{align*}
Let $a = b p_1$ and $s+1 = b p_2$, where $\gcd(p_1, p_2) = 1$. Then $s+1 \mid ai$ if and only if $p_2 \mid i$. This implies that the possible values for $i$ are exactly $p_2, 2p_2, \dots, (b-1)p_2$.
we obtain
\begin{align*}
h^*_{\mathcal{R}^{\mathrm{odd}}_{s,k,a}}(x) =1 + (b-1)x + (s+1-b)x^k.
\end{align*}
This completes the proof.
\end{proof}

\begin{cor}
Let $a,s\in\mathbb{Z}_{\geq 0}$.
Let $d = 2k$ for $k \geq 2$, and consider the $d$-dimensional simplex defined as
\begin{align*}
\mathcal{R}^{\mathrm{even}}_{s,k,a}:= \mathrm{conv}\{0, \mathbf e_1, \ldots, \mathbf e_{d-1}, (1,\underbrace{a, \ldots, a}_{k-1} , \underbrace{ -a, \ldots, -a}_{k-1}, s + 1)\} \subset \mathbb{R}^d,
\end{align*}
The $h^*$-polynomial of this simplex is given by
\begin{align*}
h^*_{\mathcal{R}^{\mathrm{even}}_{s,k,a}}(x) = h^*_{\mathcal{R}^{\mathrm{odd}}_{s,k,a}}(x).
\end{align*}
\end{cor}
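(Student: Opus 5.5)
The plan is to apply Theorem~\ref{thm-Delta} directly, exactly as in the proof of the odd corollary. Here the last vertex is $(q_1,\ldots,q_{d-1},n)$ with $n=s+1$ and $q_1=1$. For $2\le i\le k$ we have $q_i=a$, and for $k+1\le i\le 2k-1=d-1$ we have $q_i=-a$. The entries $\pm a$ cancel in the sum, so
\[
q_d=1-\sum_{i=1}^{d-1}q_i=1-1=0 .
\]
Theorem~\ref{thm-Delta} then gives
\[
h^*_{\mathcal{R}^{\mathrm{even}}_{s,k,a}}(x)=\sum_{j=0}^{s}x^{\left\lceil\frac{j}{s+1}\right\rceil+(k-1)\left(\left\lceil\frac{aj}{s+1}\right\rceil+\left\lceil\frac{-aj}{s+1}\right\rceil\right)+\left\lceil\frac{0\cdot j}{s+1}\right\rceil}.
\]

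Next I compare this exponent with the one in the odd case. In the odd case $q_d=1$ and there is no coordinate equal to $1$ among the $q_i$, $i\le d-1$. So the exponent there was $\lceil j/(s+1)\rceil+(k-1)\bigl(\lceil aj/(s+1)\rceil-\lfloor aj/(s+1)\rfloor\bigr)$. In the even case the role of the term $\lceil j/(s+1)\rceil$ is played by $q_1=1$ instead of $q_d$, and the extra term $\lceil 0\rceil$ vanishes. Using $\lceil -y\rceil=-\lfloor y\rfloor$, the two exponents therefore coincide term by term for every $j=0,\ldots,s$.

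Hence the two sums are identical, and $h^*_{\mathcal{R}^{\mathrm{even}}_{s,k,a}}(x)=h^*_{\mathcal{R}^{\mathrm{odd}}_{s,k,a}}(x)=(s+1-b)x^k+(b-1)x+1$ with $b=\gcd(a,s+1)$, by the previous corollary. There is essentially no obstacle. The only point needing care is the bookkeeping of $q_d$: one must check that it equals $0$ here rather than $1$, which follows because the leading $1$ in the vertex exactly compensates. Note also that $d-1=2k-1$ coordinates are listed, matching $1+(k-1)+(k-1)$, and this count should be confirmed explicitly.
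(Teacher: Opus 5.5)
Your proposal is correct and follows the paper's proof: apply Theorem~\ref{thm-Delta} with $q_1=1$ and $q_d=0$, and observe that the exponent $\lceil j/(s+1)\rceil+(k-1)\bigl(\lceil aj/(s+1)\rceil+\lceil -aj/(s+1)\rceil\bigr)$ is literally the same as in the odd case, where the $\lceil j/(s+1)\rceil$ term came from $q_d=1$. One small wording point: when $a=1$ the odd simplex does have coordinates equal to $1$ among $q_1,\dots,q_{d-1}$, so your claim that it has none is false in that case, but the term-by-term comparison does not depend on it.
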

\begin{proof}
By Theorem~\ref{thm-Delta}, we have
\begin{align*}
h^*_{\mathcal{R}^{\mathrm{even}}_{s,k,a}}(x) &= \sum_{i=0}^{s}x^{\left\lceil{\frac{i}{s+1}}\right\rceil
+\left\lceil{\frac{ai}{s+1}}\right\rceil+\cdots+\left\lceil{\frac{ai}{s+1}}\right\rceil+\left\lceil{\frac{-ai}{s+1}}\right\rceil
+\cdots+\left\lceil{\frac{-a i}{s+1}}\right\rceil+\left\lceil{\frac{(1-1-(k-1)a+(k-1)a) i}{s+1}}\right\rceil}
\\&=1+\sum_{i=1}^{s} {x^{(k-1)\left(\left\lceil{\frac{ai}{s+1}}\right\rceil-\left\lfloor{\frac{ai}{s+1}}\right\rfloor\right)} x^{\left\lceil{\frac{i}{s+1}}\right\rceil}}
\\&=h^*_{\mathcal{R}^{\mathrm{odd}}_{s,k,a}}(x).
\end{align*}
This completes the proof.
\end{proof}

Using an analogous argument, we derive the following result.
\begin{cor}{\em (Extended Reeve simplices)}\label{Extend-Reeve-Simplic}
Let $s\in\mathbb{Z}_{\geq 0}$.
Let $d\geq 3$ be an integer, and consider the $d$-dimensional simplex in $\mathbb{R}^d$ defined as
\begin{align*}
\mathcal{M}_{s,d}:= \mathrm{conv}\{0, \mathbf e_1, \ldots, \mathbf e_{d-1}, \omega\} \subseteq \mathbb{R}^d,
\end{align*}
where
\begin{align*}
\omega=
\begin{cases}
  ( \underbrace{1,\ldots,1}_{k-1}, \underbrace{s, \ldots, s}_{k-1}, s + 1), & \text{if $d=2k-1$}, \\
  ( \underbrace{1,\ldots,1}_{k}, \underbrace{s, \ldots, s}_{k-1}, s + 1), & \text{if $d=2k$}.
\end{cases}
\end{align*}
Then we have
$$h^*_{\mathcal{M}_{s,d}}(x) = sx^k + 1.$$
\end{cor}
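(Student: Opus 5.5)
The plan is to apply Theorem~\ref{thm-Delta} directly with $n=s+1$ and compute the exponent $\sum_{i=1}^d \lceil q_i j/(s+1)\rceil$ for each $j=0,1,\ldots,s$. The term $j=0$ always contributes exponent $0$, so it gives the constant term $1$. It then suffices to show that every $j$ with $1\le j\le s$ gives exponent exactly $k$; the sum over these $s$ values of $j$ is then $s x^k$. (If $s=0$, the formula is trivially $1$.)

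\emph{Odd case} $d=2k-1$. Here $q_1=\cdots=q_{k-1}=1$ and $q_k=\cdots=q_{2k-2}=s$. Hence
\[
q_d=1-(k-1)-(k-1)s=1-(k-1)(s+1).
\]
For $1\le j\le s$ we use $0<j/(s+1)<1$ and evaluate the three kinds of terms.
\begin{itemize}
\item Each $q_i=1$ gives $\lceil j/(s+1)\rceil=1$.
\item Each $q_i=s$ gives $\lceil sj/(s+1)\rceil=\lceil j-j/(s+1)\rceil=j$.
\item The last term gives $\lceil j/(s+1)-(k-1)j\rceil=1-(k-1)j$, since $(k-1)j$ is an integer.
\end{itemize}
Summing, the exponent is
\[
(k-1)\cdot 1+(k-1)j+1-(k-1)j=k.
\]

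\emph{Even case} $d=2k$. Now there are $k$ entries equal to $1$ and $k-1$ entries equal to $s$. Hence
\[
q_d=1-k-(k-1)s=-(k-1)(s+1).
\]
The last term is therefore $\lceil -(k-1)j\rceil=-(k-1)j$, an integer with no fractional part. The exponent is
\[
k\cdot 1+(k-1)j-(k-1)j=k.
\]

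In both cases $h^*_{\mathcal{M}_{s,d}}(x)=1+sx^k$. The only step needing care is the ceiling of the $q_d$-term. It is handled by splitting off the integer part $-(k-1)j$ and using that $j/(s+1)$ lies strictly in $(0,1)$ for $1\le j\le s$. There is no real obstacle: the argument is a direct substitution into Theorem~\ref{thm-Delta}, exactly analogous to the two preceding corollaries.
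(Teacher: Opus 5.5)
Your proposal is correct and follows the paper's proof: you substitute directly into Theorem~\ref{thm-Delta} with $n=s+1$ and show that each $j\in\{1,\ldots,s\}$ contributes exponent exactly $k$. You also write out the even case $d=2k$ (where $q_d=-(k-1)(s+1)$), which the paper only dismisses as ``a similar argument.''
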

\begin{proof}
When $d$ is odd, by Theorem~\ref{thm-Delta}, we have
\begin{align*}
h^*_{\mathcal{M}_{s,d}}(x) &=1+\sum_{i=1}^{s}x^{(k-1)\left(\left\lceil{\frac{i}{s+1}}\right\rceil +\left\lceil{\frac{si}{s+1}}\right\rceil\right)} x^{\left\lceil{\frac{(1-(k-1)-(k-1)s)i}{s+1}}\right\rceil}
\\&=1+\sum_{i=1}^{s}x^{k-1} x^{(k-1)\left\lceil{\frac{(s+1)i-i}{s+1}}\right\rceil} x^{\left\lceil{\frac{(-k+1)(s+1)i+i}{s+1}}\right\rceil}
\\&=1+\sum_{i=1}^{s}x^{k-1} x^{(k-1)i} x^{(-k+1)i+1}
\\&=1+sx^k.
\end{align*}
For the case when $d$ is even, a similar argument applies.
This completes the proof.
\end{proof}

When $d$ is odd, the result of Corollary \ref{Extend-Reeve-Simplic} reduces to \cite[Example 2.5]{Ferroni23}.
Such simplices $\mathcal{M}_{s,d}$ are referred to as \emph{generalized Reeve simplices} and denoted by $\mathcal{R}_{s,k}$.
When $d=3$, these coincide with the Reeve tetrahedra.
The $h^*$-polynomial of $\mathcal{R}_{s,k}$ in \cite[Example 2.5]{Ferroni23} was derived using a result from \cite{Batyrev-Johannes}.

An \emph{empty simplex} is a simplex which contains no lattice point except for its vertices.
A $d$-dimensional simplex having $h^*$-polynomial $h_dx^d+\cdots+h_1x+h_0$ is empty if and only if $h_1=h_d=0$.
Batyrev and Hofscheier \cite{Batyrev-Johannes} proved that for an odd-dimensional empty simplex of dimension $d$, the $h^*$-polynomials look like $1+qx^{\frac{d+1}{2}}$.
Both simplices $\mathcal{R}^{\mathrm{even}}_{s,k,a}$ and $\mathcal{M}_{s,d}$ are empty.

\subsection{Fast algorithm for the $h^*$-polynomials of $\Delta(0,q)$}
Given $q=(q_1,\ldots,q_{d-1},n) \in \mathbb{Z}^d$ with $n\neq 0$, by Theorem \ref{thm-Delta}, the $h^*$-polynomial of $\Delta(0,q)$ is
\begin{align*}
h_{\Delta(0,q)}^*(x)=\sum_{j=0}^{n-1}x^{\left\lceil{\frac{q_1 j}{n}}\right\rceil+\cdots+\left\lceil{\frac{q_d j}{n}}\right\rceil},
\quad \text{ where }\quad q_d=1-\sum_{i=1}^{d-1}q_i.
\end{align*}
This formula is hard to use when $n$ is large.

For further exploration, we develop a fast algorithm for computing the $h^*$-polynomial of $\Delta(0,q)$.
Our algorithm only depend on the numerical values of $q_1, \dots, q_{d-1}$, and is independent of $n$ (no less than $|q_i|$).

Let us introduce several functions.
Let
\begin{align*}
a_i=\frac{n}{q_i}\in \mathbb{Q}\quad \text{and} \quad \varphi_{a_i}(j)=\left\lceil \frac{j}{a_i}\right\rceil \in\mathbb{Z} \quad \text{for} \quad 1\leq i\leq d.
\end{align*}
Note that by assumption, $|a_i|\geq 1$. It is evident that for a fixed $a_i$, $\varphi_{a_i}(j)$ is a piecewise constant function in $j\in \mathbb{R}$.
Its difference function is defined by
\begin{align*}
\nabla\varphi_{a_i}(j)=\varphi_{a_i}(j)-\varphi_{a_i}(j-1) =\left\lceil \frac{j}{a_i}\right\rceil-\left\lceil \frac{j-1}{a_i}\right\rceil  \in\mathbb{Z} \quad \text{for} \quad 1\leq i\leq d.
\end{align*}
Observe that the difference function is almost $0$ everywhere. In particular, we have
\begin{lem}\label{Lemma-c1-1}
For $1\leq i\leq d$ and $m\geq 1$, let
\begin{align*}
c_{i,m} = \begin{cases}
\lfloor (m-1)a_i+1\rfloor & \text{\emph{if}}\quad a_i \geq 1; \\
\lceil -ma_i\rceil & \text{\emph{if}}\quad a_i\leq -1.
\end{cases}
\end{align*}
For $a_i\geq 1$, we have $\nabla\varphi_{a_i}(c_{i,m})=1$ for $m\geq 1$, and $\nabla\varphi_{a_i}(j)=0$ for all positive integers $j\neq c_{i,m}$.
For $a_i\leq -1$, we have $\nabla\varphi_{a_i}(c_{i,m})=-1$ for $m\geq 1$, and $\nabla\varphi_{a_i}(j)=0$ for all positive integers $j\neq c_{i,m}$.
\end{lem}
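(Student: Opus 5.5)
The plan is to treat the two signs of $a_i$ separately and, in each case, pin down exactly the positive integers $j$ at which the ceiling $\lceil j/a_i\rceil$ jumps when $j$ increases by one.

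\textbf{Case $a_i\geq 1$.} Here $1/a_i\in(0,1]$, so $\varphi_{a_i}(j)=\lceil j/a_i\rceil$ is nondecreasing in $j$. Moreover
\[
\frac{j}{a_i}-\frac{j-1}{a_i}=\frac{1}{a_i}\leq 1,
\]
so $\nabla\varphi_{a_i}(j)\in\{0,1\}$. A jump to the value $m$ occurs at $j$ exactly when
\[
\frac{j-1}{a_i}\leq m-1<\frac{j}{a_i}.
\]
The left inequality is $\lceil (j-1)/a_i\rceil\leq m-1$; the right one gives $\lceil j/a_i\rceil\geq m$. The condition is therefore $(m-1)a_i<j\leq (m-1)a_i+1$. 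This half-open interval has length $1$, so it contains exactly one integer, namely $\lfloor (m-1)a_i+1\rfloor=c_{i,m}$. Since $a_i\geq 1$, this integer is at least $1$ for every $m\geq 1$, and the values are strictly increasing in $m$. Every jump in $j\geq 1$ has the form $m=\lceil j/a_i\rceil\geq 1$ for some such $m$. Hence the positive integers with $\nabla\varphi_{a_i}(j)=1$ are exactly the $c_{i,m}$, and $\nabla\varphi_{a_i}(j)=0$ elsewhere.

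\textbf{Case $a_i\leq -1$.} Write $b=-a_i\geq 1$. Then
\[
\left\lceil \frac{j}{a_i}\right\rceil=-\left\lfloor \frac{j}{b}\right\rfloor,
\qquad\text{so}\qquad
\nabla\varphi_{a_i}(j)=-\left(\left\lfloor \frac{j}{b}\right\rfloor-\left\lfloor \frac{j-1}{b}\right\rfloor\right).
\]
Since $1/b\leq 1$, the bracketed difference lies in $\{0,1\}$. It equals $1$ exactly when some integer $m$ satisfies $(j-1)/b<m\leq j/b$, that is, $mb\leq j<mb+1$. This forces $j=\lceil mb\rceil=\lceil -ma_i\rceil=c_{i,m}$. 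For $j\geq 1$ the relevant $m$ satisfy $m\geq 1$, because $m=0$ would give $j=0$. This yields the stated $-1$ values at the points $c_{i,m}$ and $0$ at all other positive integers.

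There is no serious obstacle. The only delicate point is the boundary bookkeeping: choosing the half-open intervals correctly so that exactly one integer lies in each. One should also note that $m=0$ is excluded because only positive $j$ are considered, and that the assumption $|a_i|\geq 1$ is what bounds the jumps by $1$ in absolute value.
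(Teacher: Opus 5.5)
Your proof is correct and is essentially the paper's argument. The paper likewise observes that $\nabla\varphi_{a_i}$ equals $1$ exactly on the disjoint intervals $((m-1)a_i,(m-1)a_i+1]$ when $a_i\geq 1$, and equals $-1$ exactly on $[-ma_i,-ma_i+1)$ when $a_i\leq -1$, and then restricts to integers; you have supplied the details it leaves implicit, with one small addition needed: negative $m$ must be excluded too, which holds because they would force $j<mb+1\leq 0$.
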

\begin{proof}
This conclusion follows from the fact that:
when $a_i\geq 1$ and $j$ is a positive real number, the function $\nabla\varphi_{a_i}(j)$ takes value $1$ on the non-overlapping intervals $((m-1)a_i, (m-1)a_i+1]$ and $0$ elsewhere;
when $a_i\leq -1$ and $j$ is a positive real number, the function $\nabla\varphi_{a_i}(j)$ takes value $-1$ on the non-overlapping intervals $[-ma_i, -ma_i+1)$ and $0$ elsewhere. The proof is completed by taking $j$ to be a positive integer.
\end{proof}

By Lemma \ref{Lemma-c1-1}, we have
\begin{align}\label{Formula-Nabla-aIJ}
\sum_{j=1}^{n-1} \nabla\varphi_{a_i}(j) x^j =
\begin{cases}
x^{c_{i,1}}+x^{c_{i,2}}+\cdots +x^{c_{i,q_i-1}} & \text{if}\quad a_i =1; \\
x^{c_{i,1}}+x^{c_{i,2}}+\cdots +x^{c_{i,q_i}} & \text{if}\quad a_i > 1; \\
-x^{c_{i,1}}-x^{c_{i,2}}-\cdots -x^{c_{i,-q_i-1}} & \text{if}\quad a_i \leq -1.
\end{cases}
\end{align}
Let
$$\Phi(j)=\sum_{i=1}^d\varphi_{a_i}(j), \quad j\in\mathbb{N}.$$
Note that $\Phi(0)=0$.
We consider the following two generating functions:
\begin{align*}
F(x)=\sum_{i=1}^d \sum_{j=1}^{n-1}\nabla\varphi_{a_i}(j) x^j =\sum_{j=1}^{n-1} \sum_{i=1}^d \nabla\varphi_{a_i}(j) x^j
\quad \text{and}\quad
G(x)=\sum_{j=0}^{n-1} \Phi(j) x^j=\sum_{j=0}^{n-1} \sum_{i=1}^d\varphi_{a_i}(j) x^j.
\end{align*}

\begin{lem}\label{Lemma-FX-to-GX}
The polynomial $G(x)$ can be reconstructed from the polynomial $F(x)$. In other words, $G(x)$ is determined uniquely by the polynomial $F(x)$.
\end{lem}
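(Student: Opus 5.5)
Since $\Phi(j)$ is a sum of the $\varphi_{a_i}(j)$, it is a partial sum of its own differences. Setting $\nabla\Phi(j)=\Phi(j)-\Phi(j-1)=\sum_{i=1}^d\nabla\varphi_{a_i}(j)$ for $j\ge1$, the definition of $F$ reads
\[
F(x)=\sum_{j=1}^{n-1}\nabla\Phi(j)\,x^j .
\]
So $F$ is the generating function of the differences of the sequence $\Phi(0),\Phi(1),\ldots,\Phi(n-1)$, and $G$ is the generating function of the sequence itself. The plan is to telescope this relation and express $G$ by a partial-sum (multiplication by $1/(1-x)$) operation applied to $F$, truncated at degree $n-1$.

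First, using $\varphi_{a_i}(0)=\lceil 0\rceil=0$ we get $\Phi(0)=0$. Telescoping then gives
\[
\Phi(j)=\sum_{s=1}^{j}\nabla\Phi(s)\quad\text{for }0\le j\le n-1.
\]
Next, multiply $F(x)$ by $\frac{1}{1-x}=\sum_{u\ge0}x^u$. The coefficient of $x^j$ in $F(x)/(1-x)$ is $\sum_{s=1}^{j}\nabla\Phi(s)$ whenever $j\le n-1$, because $F$ has no terms of degree above $n-1$ or of degree $0$. Hence
\[
G(x)=\left[\frac{F(x)}{1-x}\right]_{\le n-1},
\]
where $[\,\cdot\,]_{\le n-1}$ denotes truncation to degrees at most $n-1$. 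Equivalently,
\[
(1-x)G(x)=F(x)-\Phi(n-1)\,x^{n},
\]
and $\Phi(n-1)=F(1)$ is itself read off from $F$. Either form shows that the coefficients of $G$ are determined uniquely by those of $F$: $G(x)=\bigl(F(x)-F(1)x^n\bigr)/(1-x)$, the division being exact since the numerator vanishes at $x=1$.

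The only point needing care is the indexing: $F$ is summed over $1\le j\le n-1$ while $G$ runs over $0\le j\le n-1$, and the formula relies on the vanishing constant term $\Phi(0)=0$. Conceptually this step is routine. The real point for the algorithm, which I would remark on afterwards, is that by Formula \eqref{Formula-Nabla-aIJ} the polynomial $F$ is a sparse sum of $\pm x^{c_{i,m}}$ involving roughly $\sum_i|q_i|$ monomials. The reconstruction of $G$ (and hence of the exponents $\Phi(j)$ feeding the $h^*$-polynomial of Theorem \ref{thm-Delta}) can therefore be organized as piecewise-constant runs between consecutive breakpoints $c_{i,m}$, with the value on each run known. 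This is the source of the speed-up independent of $n$.
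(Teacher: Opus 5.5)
Your proposal is correct and is the same argument as the paper's: both use $\Phi(0)=0$ and telescope $[x^j]F(x)=\Phi(j)-\Phi(j-1)$ to get $\Phi(j)=\sum_{\ell=1}^{j}[x^\ell]F(x)$. Your identity $(1-x)G(x)=F(x)-F(1)\,x^{n}$ is a correct, compact generating-function restatement of that same telescoping.
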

\begin{proof}
By $\Phi(0)=0$, we know that the constant term of $G(x)$ is $0$.
We only need to obtain the coefficient of $x^j$ ($1\leq j\leq n-1$) in $G(x)$, i.e., $\Phi(j)$.
Let $[x^j]F(x)$ denote the coefficient of $x^j$ in the polynomial $F(x)$.
According to
\begin{align*}
\Phi(j)-\Phi(j-1)&=[x^j]F(x),
\\ \Phi(j-1)-\Phi(j-2)&=[x^{j-1}]F(x),
\\ \cdots
\\ \Phi(1)-\Phi(0)&=[x^1]F(x),
\end{align*}
we obtain $\Phi(j)=\sum_{\ell=1}^j[x^\ell]F(x)$.
This completes the proof.
\end{proof}

\begin{lem}\label{Lemma-GX-to-HStar}
The polynomial $h_{\Delta(0,q)}^*(x)$ can be reconstructed from the polynomial $G(x)$. In other words, $h_{\Delta(0,q)}^*(x)$ is determined uniquely by the polynomial $G(x)$.
\end{lem}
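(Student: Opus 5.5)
The plan is to read the $h^*$-polynomial directly off the coefficients of $G(x)$. The tool is Theorem~\ref{thm-Delta}, rewritten in terms of the functions $\varphi_{a_i}$. Since $a_i=n/q_i$, we have $\varphi_{a_i}(j)=\lceil q_i j/n\rceil$. Hence, for each $0\le j\le n-1$, the exponent of the $j$-th summand in Theorem~\ref{thm-Delta} is
\[
\sum_{i=1}^d \left\lceil \frac{q_i j}{n}\right\rceil=\sum_{i=1}^d\varphi_{a_i}(j)=\Phi(j).
\]
This requires $q_i\neq 0$ so that $a_i$ is defined. If some $q_i=0$, the corresponding term $\lceil 0\cdot j/n\rceil$ is $0$, so we simply omit that index from the sum; this does not change $\Phi$.

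With this rewriting, Theorem~\ref{thm-Delta} gives
\[
h_{\Delta(0,q)}^*(x)=\sum_{j=0}^{n-1}x^{\Phi(j)}.
\]
By definition, $G(x)=\sum_{j=0}^{n-1}\Phi(j)x^j$, so the list of coefficients $\Phi(0),\Phi(1),\ldots,\Phi(n-1)$ of $G(x)$ is exactly the list of exponents appearing in $h^*$.

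The reconstruction is then explicit. Given $G(x)$, we extract $\Phi(j)=[x^j]G(x)$ for each $0\le j\le n-1$, with $[x^0]G(x)=\Phi(0)=0$. We then form $\sum_{j=0}^{n-1}x^{[x^j]G(x)}$. Equivalently, the coefficient $h_k^*$ equals the number of indices $j\in\{0,\ldots,n-1\}$ with $[x^j]G(x)=k$. The number of terms, $n$, is known because $G$ is indexed from $0$ to $n-1$, or simply because $n$ is part of the data $q$. Hence $h^*$ is uniquely determined by $G$.

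There is no real obstacle here. The only point needing care is the identification of $\varphi_{a_i}(j)$ with $\lceil q_ij/n\rceil$. This holds when $q_i$ and $n$ have the same sign and also when they have opposite signs, since $j/a_i=q_ij/n$ as rational numbers in every case. We also need $q_d$ to be included in $\Phi$, which it is, since $\Phi$ sums over $1\le i\le d$.

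Combined with Lemma~\ref{Lemma-FX-to-GX}, this gives the chain $F\to G\to h^*$. Since $F$ depends only on the short lists $c_{i,m}$ from \eqref{Formula-Nabla-aIJ}, this chain is the basis of the fast algorithm.
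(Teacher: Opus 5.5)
Your proposal is correct and takes essentially the same approach as the paper: you identify the exponent in Theorem~\ref{thm-Delta} with $\Phi(j)=[x^j]G(x)$, and then read off $h^*_\ell$ as the number of indices $j\in\{0,\dots,n-1\}$ with $[x^j]G(x)=\ell$. Your remark about indices with $q_i=0$ is a harmless refinement of the paper's implicit assumption $|a_i|\ge 1$.
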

\begin{proof}
It is clear that
\begin{align*}
h_{\Delta(0,q)}^*(x)&=\sum_{j=0}^{n-1}x^{\sum_{i=1}^d\varphi_{a_i}(j)}
 = \sum_{\ell=0}^d \left|\left\{j: \sum_{i=1}^d\varphi_{a_i}(j)=\ell,\quad 0\leq j\leq n-1 \right\}\right| x^\ell
\\& = \sum_{\ell=0}^d \left|\left\{j: \Phi(j)=\ell,\quad 0\leq j\leq n-1 \right\}\right| x^\ell
\\& = \sum_{\ell=0}^d \left|\left\{j: [x^j]G(x)=\ell,\quad 0\leq j\leq n-1 \right\}\right|  x^\ell.
\end{align*}
This completes the proof.
\end{proof}

\begin{exa}
Let $d=7$, $n=20$, and $q=(1,5,6,8,-3,-7,20)$. Thus we get $q_7=1-(1+5+6+8-3-7)=-9$.
A routine calculation yields
$$F(x)=4x-x^3+x^4-x^7+x^8-x^9+2x^{11}-2x^{12}+2x^{13}-x^{14}-x^{15}+2x^{17}-x^{18}.$$
Furthermore, we obtain
\begin{align*}
G(x)&=0x^0+4x^1+4x^2+3x^3+4x^4+4x^5+4x^6+3x^7+4x^8+3x^9+3x^{10}+5x^{11}
\\ & \quad \quad +3x^{12}+5x^{13}+4x^{14}+3x^{15}+3x^{16}+5x^{17}+4x^{18}+4x^{19}.
\end{align*}
Therefore, the $h^*$-polynomial of $\Delta(0,q)$ is
\begin{align*}
h_{\Delta(0,q)}^*(x)=1+7x^3+9x^4+3x^5.
\end{align*}

However, if we  explicitly  construct $G(x)$, we still have $n$ terms to deal with. Our point is that
we can construct the $h^*$-polynomial directly from $F(x)$, which has at most $|q_1|+\cdots + |q_d|$ terms.

Indeed, if $F(x) = \sum_{j=1}^{s-1} c_{j} x^{i_j}$, then
$h_{\Delta(0,q)}^*(x)$ can be computed by the following facts: $[x^0] G(x)=0$, and the coefficient remains $0$ until $[x^{i_1}] G(x)=c_1$;
then the coefficients remains $c_1$ until $[x^{i_2}] G(x)=c_1+c_2$, and so on. In general, $[x^{i_j}] G(x)=c_1+c_2+\cdots +c_j$. Then
\begin{align}\label{Equ-LXT-H*}
h_{\Delta(0,q)}^*(x)=(i_1-i_0)x^0 + (i_2-i_1)x^{c_1}+ \cdots + (i_s-i_{s-1}) x^{c_1+\cdots+c_{s-1}},
\end{align}
where we treat $i_0=0, \ i_s=n$.
\end{exa}

\begin{algorithm}\label{TheAlgorithm-HStar}
\DontPrintSemicolon
\KwInput{$q=(q_1,\ldots,q_{d-1},n) \in \mathbb{Z}^d$ with $q_d=1-(q_1+q_2+\cdots+q_{d-1})$ and $n\geq |q_i|$.}
\KwOutput{The $h^*$-polynomial of $\Delta(0,q)$.}

Computing $a_i=\frac{n}{q_i}$ for $1\leq i\leq d$.

Computing the polynomial $F(x)$ by Equation \eqref{Formula-Nabla-aIJ}.

Return the $h^*$-polynomial of $\Delta(0,q)$ by Equation \ref{Equ-LXT-H*}.

\caption{Computing the $h^*$-polynomial of $\Delta(0,q)$.}
\end{algorithm}

\begin{thm}
Let $q=(q_1,\ldots,q_{d-1},n) \in \mathbb{Z}^d$ with $q_d=1-(q_1+q_2+\cdots+q_{d-1})$ and $n\geq |q_i|$.
Then Algorithm \ref{TheAlgorithm-HStar} correctly computes the $h^*$-polynomial of $\Delta(0,q)$
with $ \mathcal O(\sum_{i=1}^{d-1}|q_i|)$ arithmetic operations.
\end{thm}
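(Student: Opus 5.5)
Correctness follows by chaining the three reductions already in place. By Theorem~\ref{thm-Delta}, $h^*_{\Delta(0,q)}(x)=\sum_{j=0}^{n-1}x^{\Phi(j)}$ with $\Phi(j)=\sum_{i=1}^d\lceil q_ij/n\rceil=\sum_i\varphi_{a_i}(j)$. Lemma~\ref{Lemma-FX-to-GX} gives $\Phi(j)=\sum_{\ell\le j}[x^\ell]F(x)$, and Lemma~\ref{Lemma-GX-to-HStar} turns the value distribution of $\Phi$ on $\{0,\dots,n-1\}$ into $h^*$.

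Since $\Phi$ is a step function that changes only at exponents occurring in $F$, I would list the exponents of $F$ in increasing order $i_1<\dots<i_{s-1}$, after merging equal exponents from different $i$ and discarding those with total coefficient $0$. Then $\Phi$ equals $c_1+\dots+c_{t}$ on the integer interval $[i_t,i_{t+1})$, which contains $i_{t+1}-i_t$ integers, with $i_0=0$ and $i_s=n$. Summing $x^{\Phi(j)}$ over each interval gives exactly Equation~\eqref{Equ-LXT-H*}. Terms with equal exponents simply add, so the formula is valid even if several partial sums coincide.

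The one point to check is that Equation~\eqref{Formula-Nabla-aIJ} correctly lists the jumps of each $\varphi_{a_i}$ inside $1\le j\le n-1$. By Lemma~\ref{Lemma-c1-1}, the jumps of $\varphi_{a_i}$ occur exactly at the points $c_{i,m}$. I need to confirm which $m$ give $c_{i,m}\le n-1$:
\begin{itemize}
\item For $a_i=n/q_i\ge1$ we have $c_{i,m}=\lfloor (m-1)n/q_i\rfloor+1$, which is at most $n-1$ iff $m\le q_i$, with the exception that when $q_i\mid n$ and $m=q_i$ it can equal $n$ only if $a_i=1$. This matches the three cases listed.
\item For $a_i\le -1$ we have $c_{i,m}=\lceil mn/|q_i|\rceil\le n-1$ iff $m\le |q_i|-1$.
\end{itemize}
The case $q_i=0$ contributes nothing, and $q_d$ is handled in the same way. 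The hypothesis $n\ge|q_i|$ guarantees $|a_i|\ge1$, so Lemma~\ref{Lemma-c1-1} applies. This boundary bookkeeping is the main place where care is needed.

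For the complexity count:
\begin{itemize}
\item Computing the $a_i$ costs $O(d)$.
\item Generating the exponents $c_{i,m}$ costs $O(|q_i|)$ operations for each $i$, so $F$ has at most $\sum_{i=1}^d|q_i|$ terms. Since $|q_d|\le 1+\sum_{i<d}|q_i|$, this is $O(\sum_{i=1}^{d-1}|q_i|)$ (absorbing $d$, as $d$ is bounded by the number of nonzero $q_i$ plus lower-order terms, or treating $d$ as part of the input size).
\item Merging the $d$ sorted lists of exponents and accumulating the partial sums and interval lengths is linear in the number of terms. One can avoid a logarithmic factor by noting that all exponents lie in $[1,n-1]$ but are generated monotonically per $i$. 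Alternatively, count the merge as arithmetic operations in the comparison model, or accept the bound up to the heap factor.
\end{itemize}
I expect the main obstacle to be stating this sorting step honestly. I would first try a bucketing argument, and otherwise state the bound for arithmetic operations excluding comparisons.
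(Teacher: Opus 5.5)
Your proposal is correct and follows the same route as the paper. Correctness comes from chaining Equation~\eqref{Formula-Nabla-aIJ} with Lemmas~\ref{Lemma-FX-to-GX} and~\ref{Lemma-GX-to-HStar}, and the complexity bound comes from $F(x)$ having at most $\sum_{i=1}^d|q_i|$ terms together with $|q_d|\le 1+\sum_{i<d}|q_i|$. Your explicit check of which $c_{i,m}$ land in $[1,n-1]$ and your caveat about the cost of merging the exponent lists are details the paper passes over silently; its count implicitly ignores comparisons, and a bucket sort over $[1,n-1]$ would reintroduce an $\mathcal{O}(n)$ cost, so your comparison-free accounting is the fallback that actually matches the stated bound.
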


\begin{proof}
It follows from Equation \eqref{Formula-Nabla-aIJ}, together with Lemmas \ref{Lemma-FX-to-GX} and \ref{Lemma-GX-to-HStar}, that Algorithm \ref{TheAlgorithm-HStar} correctly computes the $h^*$-polynomial of $\Delta(0,q)$.

Furthermore, it is straightforward to see that the overall time complexity of Algorithm \ref{TheAlgorithm-HStar} is dominated by Step 3, which, by Equation \ref{Equ-LXT-H*}, is bounded by $\mathcal{O}\left(\sum_{i=1}^{d} |q_i|\right)$ arithmetic operations. The theorem then follows from the expression of $q_d$.
\end{proof}

Algorithm \ref{TheAlgorithm-HStar} has been implemented by us in \texttt{Maple} \cite{Maple} and is available at:
\url{https://github.com/TygerLiu/TygerLiu.github.io/tree/main/Procedure/Sign-pattern-problem}.

A modified version of Algorithm \ref{TheAlgorithm-HStar} is used in later sections to compute the $h^*$-polynomial of a particular family of simplices. This plays a key role in the investigation of the sign patterns of the Ehrhart polynomial.

\section{Simplex family: $\Delta(0,q^{(m)})$}\label{Section-Five}

We  now introduce a family of parametric $\Delta(0,q)$-simplices, namely $\Delta(0,q^{(m)})$. This family has nicer $h^*$-polynomials.

\subsection{Characteristic polynomials}
Consider $q=(q_1, \dots, q_{d-1},n)\in \mathbb{Z}^d$, where $n$ is a positive integer. We further require
$q_i \mid n$ for all $i = 1, \dots, d$, with the convention $q_{d}=1-\sum_{j=1}^{d-1}q_j$.
For a positive integer $m$, define $q^{(m)} = (q_1, \dots, q_{d-1}, mn)$. We study the family of $\Delta(0,q^{(m)})$-simplices.
We will often use the notation $a^+ = \max(0,a)$ for any integer $a$.

\begin{thm}\label{thm-dec-UnEX}
There exist unique polynomials $L_1(x)$ and $L_2(x)$ such that
\begin{align*}
h^*_{\Delta(0,q^{(m)})}(x)=mx\cdot L_1(x) +L_2(x),
\end{align*}
where
\begin{align*}
L_1(x)=\sum_{j=0}^{n-1}x^{\left\lceil{\frac{q_1j+q_1^+}{n}}\right\rceil+\cdots+\left\lceil{\frac{q_d j+q_d^+}{n}}\right\rceil-1}
\end{align*}
and
\begin{align*}
L_2(x)=h^*_{\Delta(0,q)}(x)-\sum_{j=0}^{n-1}x^{\left\lceil{\frac{q_1j+q_1^+}{n}}\right\rceil
  +\cdots+\left\lceil{\frac{q_d j+q_d^+}{n}}\right\rceil}.
\end{align*}
\end{thm}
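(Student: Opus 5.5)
The plan is to apply Theorem~\ref{thm-Delta} directly to $q^{(m)}$ and then split the summation index into blocks of length $m$. Since $q_d=1-\sum_{i=1}^{d-1}q_i$ depends only on $q_1,\dots,q_{d-1}$, it is the same for $q$ and $q^{(m)}$. Theorem~\ref{thm-Delta} therefore gives
\begin{align*}
h^*_{\Delta(0,q^{(m)})}(x)=\sum_{j=0}^{mn-1}x^{\sum_{i=1}^d\left\lceil \frac{q_i j}{mn}\right\rceil}.
\end{align*}
I would write each $j$ uniquely as $j=ms+t$ with $0\le s\le n-1$ and $0\le t\le m-1$. Then
\begin{align*}
\frac{q_i j}{mn}=\frac{q_i s}{n}+\frac{q_i t}{mn}.
\end{align*}
The terms with $t=0$ reproduce exactly $h^*_{\Delta(0,q)}(x)$.

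The key step is to show that for $1\le t\le m-1$ the exponent is independent of $t$, namely
\begin{align*}
\left\lceil \frac{q_i s}{n}+\frac{q_i t}{mn}\right\rceil=\left\lceil \frac{q_i s+q_i^+}{n}\right\rceil \quad\text{for each } i.
\end{align*}
This is where the hypothesis $q_i\mid n$ is used, and it splits into three cases.
\begin{itemize}
\item If $q_i=0$, both sides are $0$.
\item If $q_i>0$, put $k=n/q_i\in\mathbb{Z}^+$. Then $q_i s/n=s/k$ lies on the grid $\frac1k\mathbb{Z}$, and the perturbation $t/(mk)$ lies strictly in $(0,1/k)$. Writing $s=ak+b$ with $0\le b<k$, the left side equals $a+1=\lceil (s+1)/k\rceil=\lceil (q_is+q_i)/n\rceil$.
\item If $q_i<0$, put $k=n/|q_i|$. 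Now $q_is/n=-s/k$ is a grid point of $\frac1k\mathbb{Z}$ shifted down by less than $1/k$. Such a shift cannot pass below the integer $\lceil -s/k\rceil$, because that integer is the grid point at or above $-s/k$. Hence the ceiling stays $\lceil q_is/n\rceil=\lceil (q_is+q_i^+)/n\rceil$.
\end{itemize}
I expect this case analysis, especially the negative case, to be the only delicate point.

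Summing over $t$, each $s$ contributes $(m-1)$ copies of $x^{E(s)}$, where $E(s)=\sum_i\lceil (q_is+q_i^+)/n\rceil$. This gives
\begin{align*}
h^*_{\Delta(0,q^{(m)})}(x)=h^*_{\Delta(0,q)}(x)+(m-1)\sum_{s=0}^{n-1}x^{E(s)},
\end{align*}
which rearranges to $mx\,L_1(x)+L_2(x)$ with $L_1,L_2$ exactly as in the statement. For $xL_1(x)=\sum_s x^{E(s)}$ to be a genuine polynomial identity with $L_1$ a polynomial, one needs $E(s)\ge 1$. This holds because $\sum_i (q_is+q_i^+)/n\ge (s+\sum_i q_i^+)/n$ and $\sum_i q_i^+\ge \sum_i q_i=1$; a sum of ceilings is at least the ceiling of the sum, so $E(s)\ge\lceil (s+1)/n\rceil\ge 1$.

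Uniqueness follows because the identity holds for every positive integer $m$ while $L_1,L_2$ do not depend on $m$. Comparing the coefficient of each power of $x$ as a linear function of $m$, and using any two distinct values of $m$, forces the coefficients of $xL_1$ and of $L_2$.
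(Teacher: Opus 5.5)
Your proposal is correct and is essentially the paper's proof: apply Theorem~\ref{thm-Delta} to $q^{(m)}$, separate the indices that are multiples of $m$ (which give $h^*_{\Delta(0,q)}(x)$) from the $n$ blocks of $m-1$ intermediate indices, use $q_i\mid n$ to show that each ceiling is constant on a block and equals $\lceil (q_is+q_i^+)/n\rceil$, then rearrange and get uniqueness by letting $m$ vary. Two small remarks: in the case $q_i<0$, the integer the shifted value must stay strictly above is $\lceil -s/k\rceil-1$ (a point of $\frac1k\mathbb{Z}$ strictly below $-s/k$, hence at most $-s/k-1/k$), not $\lceil -s/k\rceil$ itself as your wording says; and your check that $E(s)\ge 1$, which the paper only establishes in a later proposition, is a welcome addition.
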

\begin{proof}
Since $q_k \mid n$, let $a_k = n/q_k$ (which is an integer) with $1\leq k\leq d$. By Theorem~\ref{thm-Delta}, we have
\begin{align*}
h^*_{\Delta(0,q^{(m)})}(x)&= \sum_{i=0}^{mn-1}x^{\left\lceil{\frac{q_1 i}{mn}}\right\rceil+\cdots+\left\lceil{\frac{q_d i}{mn}}\right\rceil}\\
&= 1+\sum_{j=0}^{n-1}\sum_{i=mj+1}^{m(j+1)-1}x^{\left\lceil{\frac{q_1 i}{mn}}\right\rceil+\cdots+\left\lceil{\frac{q_d i}{mn}}\right\rceil} +\sum_{j=1}^{n-1}x^{\left\lceil{\frac{q_1 mj}{mn}}\right\rceil+\cdots+\left\lceil{\frac{q_d mj}{mn}}\right\rceil}\\
&= 1+\sum_{j=0}^{n-1}\sum_{i=mj+1}^{mj+m-1}x^{\left\lceil{\frac{ i}{ma_1}}\right\rceil+\cdots+\left\lceil{\frac{ i}{ma_d}}\right\rceil} +\sum_{j=1}^{n-1}x^{\left\lceil{\frac{q_1 j}{n}}\right\rceil+\cdots+\left\lceil{\frac{q_d j}{n}}\right\rceil}.
\end{align*}
It is clear that for every $1\leq k\leq d$ and $0\leq j\leq n-1$, we get
$$x^{\left\lceil{\frac{mj+1}{ma_k}}\right\rceil}=x^{\left\lceil{\frac{mj+2}{ma_k}}\right\rceil}
=\cdots=x^{\left\lceil{\frac{mj+m-1}{ma_k}}\right\rceil}.$$
Moreover, when $a_k>0$, (now $q_k^+=q_k$), we have
$$x^{\left\lceil{\frac{mj+m-1}{ma_k}}\right\rceil}=x^{\left\lceil{\frac{mj+m}{ma_k}}\right\rceil}
=x^{\left\lceil{\frac{q_kmj+mq_k^+}{mn}}\right\rceil},$$
while for $a_k<0$, (now $q_k^+=0$), we have
$$x^{\left\lceil{\frac{mj+1}{ma_k}}\right\rceil}=x^{\left\lceil{\frac{mj}{ma_k}}\right\rceil}
=x^{\left\lceil{\frac{q_kmj+mq_k^+}{mn}}\right\rceil}.$$
Therefore, we obtain
\begin{align*}
h^*_{\Delta(0,q^{(m)})}(x)&= \sum_{j=0}^{n-1}(m-1)\left(x^{\left\lceil{\frac{q_1 mj+mq_1^+}{mn}}\right\rceil+\cdots+\left\lceil{\frac{q_d mj+mq_d^+}{mn}}\right\rceil}\right) + h^*_{\Delta(0,q)}(x)
\\=&m\sum_{j=0}^{n-1}x^{\left\lceil{\frac{q_1j+q_1^+}{n}}\right\rceil+\cdots+\left\lceil{\frac{q_d j+q_d^+}{n}}\right\rceil}+
\left( h^*_{\Delta(0,q)}(x)-\sum_{j=0}^{n-1}x^{\left\lceil{\frac{q_1j+q_1^+}{n}}\right\rceil+\cdots+\left\lceil{\frac{q_d j+q_d^+}{n}}\right\rceil}\right)
\\=&mx\cdot L_1(x) +L_2(x).
\end{align*}

Since $m$ is arbitrary and $h^*_{\Delta(0,q^{(m)})}(x)$ is a polynomial with constant term $1$, it follows that $L_1(x)$ and $L_2(x)$ are uniquely determined polynomials.
\end{proof}

We call $L_1(x)$ and $L_2(x)$ the \emph{characteristic polynomials} of the $\Delta(0,q^{(m)})$ simplex family.
An efficient algorithm similar to Algorithm \ref{TheAlgorithm-HStar} also exists for computing the polynomial $L_1(x)$. It can be obtained through a minor adaptation of Algorithm \ref{TheAlgorithm-HStar}, the details of which we omit here. We have implemented this algorithm in \texttt{Maple}, which is available at the same address:
\url{https://github.com/TygerLiu/TygerLiu.github.io/tree/main/Procedure/Sign-pattern-problem}.

Let $L(x)=xL_1(x)$. We now provide an alternative proof that $L(x)$ is a polynomial of degree at most $d$ and at least $1$.
\begin{prop}
Let $n$ be a positive integer.
For any given vector $q=(q_1, \dots, q_{d-1},n)\in \mathbb{Z}^d$, assume that $q_i \mid n$ for every $i = 1, \dots, d$,
where we set
$$q_{d}=1-\sum_{j=1}^{d-1}q_j\quad \text{and}\quad q_{d+1}=-1.$$
Then
$$L(x)=\sum_{j=0}^{n-1}x^{\left\lceil{\frac{q_1j+q_1^+}{n}}\right\rceil+\cdots+\left\lceil{\frac{q_d j+q_d^+}{n}}\right\rceil}$$
is a polynomial of degree at most $d$ and at least $1$.
\end{prop}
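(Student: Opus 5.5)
The plan is to bound each exponent
$$E(j)=\sum_{i=1}^{d}\left\lceil \frac{q_ij+q_i^+}{n}\right\rceil,\qquad 0\le j\le n-1,$$
term by term, showing $1\le E(j)\le d$ for every $j$. Since $L(x)$ is a sum of $n\ge 1$ monomials $x^{E(j)}$ with nonnegative coefficients, there is no cancellation. The two-sided bound on $E(j)$ then gives at once that $L(x)$ has degree between $1$ and $d$. It is also the natural reason $L_1(x)=L(x)/x$ is a genuine polynomial, as asserted in Theorem~\ref{thm-dec-UnEX}.

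\textbf{Upper bound.} For $q_i>0$ and $0\le j\le n-1$,
$$q_ij+q_i^+=q_i(j+1)\le q_in,$$
so the $i$-th ceiling is at most $q_i$. That is not the bound we want, so instead I will write each term as a fractional-part correction. Put
$$\left\lceil \frac{q_ij+q_i^+}{n}\right\rceil=\frac{q_ij}{n}+\theta_i(j).$$
When $q_i>0$, the relation $q_i\mid n$ (with $a_i=n/q_i$ an integer) gives
$$\left\lceil \frac{j+1}{a_i}\right\rceil-\frac{j}{a_i}\in\Bigl(0,\,1\Bigr].$$
This holds because $\lceil (j+1)/a_i\rceil\le (j+a_i)/a_i$, using that $j+1$ is an integer and $a_i$ is an integer. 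When $q_i<0$, $\lceil q_ij/n\rceil-q_ij/n\in[0,1)$. Summing over $i$ and using $\sum_{i=1}^d q_i=1$ gives
$$E(j)=\frac{j}{n}+\sum_i\theta_i(j)<1+d,$$
so $E(j)\le d$, since $E(j)$ is an integer. Here $j/n<1$ is exactly where $q_d=1-\sum q_j$ enters.

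\textbf{Lower bound.} The same identity gives
$$E(j)=\frac{j}{n}+\sum_i\theta_i(j)\ge \sum_{q_i>0}\theta_i(j).$$
At least one $q_i$ is positive, since their sum is $1$, and for that index $\theta_i(j)>0$. Hence $E(j)>0$ and so $E(j)\ge1$. This handles $j=0$ too: there $E(0)=\#\{i:q_i>0\}\ge1$. The auxiliary $q_{d+1}=-1$ in the statement can be used to absorb the $j/n$ term if one prefers a symmetric formulation: $\lceil -j/n\rceil=0$ lets one rewrite $E(j)$ as a sum over $d+1$ ceilings of total linear weight zero. I would remark on this but not rely on it.

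\textbf{Main obstacle.} The delicate step is the claim $\theta_i(j)\le 1$ for $q_i>0$, that is, $\lceil (j+1)/a_i\rceil\le j/a_i+1$. It needs the divisibility $q_i\mid n$: then $(j+1)/a_i$ has denominator $a_i$, and the ceiling exceeds it by at most $(a_i-1)/a_i$. Without divisibility, this bound could fail and push the exponent to $d+1$.
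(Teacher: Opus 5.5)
Your proof is correct and is essentially the paper's argument. The paper adjoins $q_{d+1}=-1$ so that the weights sum to zero, and writes the exponent as $\sum_{q_i>0}(1-\{q_ij/n\})+\sum_{q_i<0}\{|q_i|j/n\}+\{j/n\}$; this is exactly your $\sum_i\theta_i(j)+j/n$, with the $j/n$ term kept separate. It then concludes $0<E(j)<d+1$ from the same per-term bounds, using $q_i\mid n$ precisely to get $\theta_i(j)\le 1$ when $q_i>0$.
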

\begin{proof}
Since $q_{d+1}=-1$, we have $q_1+\cdots +q_{d}+q_{d+1}=0$ and
$$\left\lceil \frac{q_{d+1}j+q_{d+1}^+}{n}\right\rceil=0 \quad \text{for all } 0\leq j\leq n-1.$$
Let $\{u_1,u_1,\ldots,u_k\}$ and $\{-v_{k+1},\ldots,-v_{d+1}\}$ be the sets of positive and negative integers in $\{q_1,\ldots,q_d,q_{d+1}\}$, respectively.
Then we have
\begin{align*}
\sum_{i=1}^d \left\lceil{\frac{q_ij+q_i^+}{n}}\right\rceil
=\sum_{i=1}^k \left\lceil{\frac{u_i(j+1)}{n}}\right\rceil+ \sum_{i=k+1}^{d+1}\left\lceil{\frac{-v_i j}{n}}\right\rceil\quad \text{for all }\quad 0\leq j\leq n-1.
\end{align*}
For any $a>0$ and $j\geq 0$, it is known that the floor function satisfies
$\left\lceil \frac{j+1}{a}\right\rceil =1+\left\lfloor \frac{j}{a}\right\rfloor$.
By $q_i\mid n$ for all $i$, we obtain
\begin{align*}
\sum_{i=1}^k \left\lceil{\frac{u_i(j+1)}{n}}\right\rceil+ \sum_{i=k+1}^{d+1}\left\lceil{\frac{-v_i j}{n}}\right\rceil
=k+\sum_{i=1}^k \left\lfloor{\frac{u_i j}{n}}\right\rfloor- \sum_{i=k+1}^{d+1}\left\lfloor{\frac{v_i j}{n}}\right\rfloor
\quad \text{for all }\quad 0\leq j\leq n-1.
\end{align*}
For any $a\in \mathbb{R}$, let $\{ a\}=a-\lfloor a\rfloor$.
Since $u_1+\cdots+u_k-v_{k+1}-\cdots -v_{d+1}=0$, it follows that
\begin{align*}
\sum_{i=1}^k \left\lceil{\frac{u_i(j+1)}{n}}\right\rceil+ \sum_{i=k+1}^{d+1}\left\lceil{\frac{-v_i j}{n}}\right\rceil
&=k+\sum_{i=1}^k \left(\frac{u_i j}{n}- \left\{\frac{u_i j}{n}\right\}\right)
- \sum_{i=k+1}^{d+1}\left(\frac{v_i j}{n}- \left\{\frac{v_i j}{n}\right\}\right)
\\&=\sum_{i=1}^k \left(1- \left\{\frac{u_i j}{n}\right\}\right)+\sum_{i=k+1}^{d+1}\left\{\frac{v_i j}{n}\right\}.
\end{align*}
According to
$$0< 1- \left\{\frac{u_i j}{n}\right\} \leq 1, \quad 0\leq \left\{\frac{v_i j}{n}\right\} <1, \quad \text{and}\quad \sum_{i=1}^k u_i-\sum_{i=k+1}^{d+1}v_i=0, $$
for $0\leq j\leq n-1$, we obtain
\begin{align*}
1\leq \sum_{i=1}^k \left(1- \left\{\frac{u_i j}{n}\right\}\right)+\sum_{i=k+1}^{d+1}\left\{\frac{v_i j}{n}\right\}<k+(d+1-k-1+1)=d+1.
\end{align*}
This completes the proof.
\end{proof}

\subsection{Associated properties}
Follow the notation in the previous subsection.

\begin{prop}\label{Prop-L1L2-pro}
The polynomials $L_1(x)$ and $L_2(x)$ satisfy
\begin{align*}
L_1(1)=n, \quad L_2(0)=1,\quad\text{and}\quad L_2(1)=0.
\end{align*}
\end{prop}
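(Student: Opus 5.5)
The three identities come directly from the explicit formulas in Theorem~\ref{thm-dec-UnEX}, together with the fact that an $h^*$-polynomial evaluated at $x=1$ gives the normalized volume.

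\textbf{The value $L_1(1)=n$.} By Theorem~\ref{thm-dec-UnEX},
$$L_1(x)=\sum_{j=0}^{n-1}x^{e_j-1},\qquad e_j=\sum_{i=1}^d\left\lceil \frac{q_ij+q_i^+}{n}\right\rceil .$$
The preceding proposition shows $e_j\ge 1$ for every $j$, so each summand is a genuine monomial. Hence $L_1$ is a sum of exactly $n$ monomials with coefficient $1$, and $L_1(1)=n$.

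\textbf{The value $L_2(1)=0$.} By Theorem~\ref{thm-Delta}, $h^*_{\Delta(0,q)}(x)$ is a sum of $n$ monomials, so $h^*_{\Delta(0,q)}(1)=n$. The subtracted sum in the definition of $L_2$ is exactly $xL_1(x)$, which also equals $n$ at $x=1$. Therefore $L_2(1)=n-n=0$.

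As a consistency check, the decomposition gives $h^*_{\Delta(0,q^{(m)})}(1)=mn+0=mn$. This agrees with the normalized volume $|\det|=mn$, i.e.\ with Proposition~\ref{Properties of Lattices and Fundamental Parallelepipeds} applied to the matrix $A$ from the proof of Theorem~\ref{thm-Delta}.

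\textbf{The value $L_2(0)=1$.} We have $L_2(0)=h^*_{\Delta(0,q)}(0)-0$, because every exponent $e_j$ of the subtracted sum is at least $1$; this again uses the lower bound from the preceding proposition. The constant term of $h^*_{\Delta(0,q)}$ is $1$, coming from the $j=0$ term of Theorem~\ref{thm-Delta}. It is the only such term, because for $1\le j\le n-1$ the exponent
$$\sum_i\lceil q_ij/n\rceil\ge\sum_i q_ij/n=j/n>0$$
is an integer and hence at least $1$. (Alternatively, this follows from $h_0^*=1$ in \eqref{formula-h^*-prop}.)

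\textbf{Main obstacle.} The only nontrivial ingredient is the lower bound $e_j\ge 1$ for all $0\le j\le n-1$, which the preceding proposition supplies. If one wanted to avoid relying on it, one could check directly that $e_0=\sum_i q_i^+>0$, since $\sum_i q_i=1$ forces some $q_i>0$. For $j\ge 1$, one would combine the inequality $\lceil (q_ij+q_i^+)/n\rceil\ge\lceil q_ij/n\rceil$ with the argument above.
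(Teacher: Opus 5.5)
Your proof is correct, but for $L_1(1)=n$ and $L_2(1)=0$ it takes a different route from the paper.

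\textbf{The paper's argument.} It does not evaluate the explicit formulas. It uses that $h^*_{\Delta(0,q^{(m)})}(1)$ counts the lattice points of the fundamental parallelepiped, namely $mn$. Hence $mL_1(1)+L_2(1)=mn$ for every $m$. Since $L_1,L_2$ are independent of $m$, this forces $L_1(1)=n$ and $L_2(1)=0$. For $L_2(0)=1$ the paper simply invokes $h_0^*=1$ for $\Delta(0,q^{(m)})$.

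\textbf{Your argument.} You count terms directly. $L_1$ is a sum of $n$ monomials, and $L_2=h^*_{\Delta(0,q)}-xL_1$ is a difference of two $n$-term sums.

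\textbf{Comparison.} Your version is more elementary and self-contained: it needs no volume interpretation and no variation in $m$. It also makes explicit the fact $e_j\ge 1$, which the paper's step for $L_2(0)=1$ uses only implicitly, through $xL_1(x)$ vanishing at $x=0$. The paper's version is shorter. It would apply to any decomposition $h^*=mxL_1+L_2$ with $m$-independent $L_1,L_2$, even without an explicit formula for them.

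\textbf{A small slip.} In your closing aside, $e_0=\sum_i\lceil q_i^+/n\rceil$, not $\sum_i q_i^+$. Under $q_i\mid n$ this equals the number of positive $q_i$. Your conclusion $e_0\ge 1$ is unaffected.
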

\begin{proof}
By Equation~\eqref{formula-h^*-prop}, we have $h^*_{\Delta(0,q^{(m)})}(0)=1$, this imples $L_2(0)=1$.
By Equation \eqref{formual--cone(P)hstar} and \cite[Lemma 9.2]{BeckRobins}, we know that $$h^*_{\Delta(0,q^{(m)})}(1)=\#(\Pi(\mathrm{cone}(\Delta(0,q^{(m)})))\cap \mathbb{Z}^{d+1})=mn.$$
Since $h^*_{\Delta(0,q^{(m)})}(1)=m\cdot L_1(1) +L_2(1)=mn$ for any $m$, we have $L_1(1) = n$ and $L_2(1) = 0$.
\end{proof}

\begin{prop}\label{Proposition-Symmetric-L1}
The characteristic polynomial $L_1(x)=\sum_{i=0}^{d-1} c_i x^i$ is a symmetric polynomial, i.e., $c_i=c_{d-1-i}$ for all $i=0,\ldots,d-1$.
\end{prop}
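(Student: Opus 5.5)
The plan is to exhibit an involution on the index set $\{0,1,\ldots,n-1\}$ that reflects the exponents. Set $L(x)=xL_1(x)=\sum_{j=0}^{n-1}x^{e(j)}$ with
$$e(j)=\sum_{i=1}^{d}\left\lceil \frac{q_ij+q_i^+}{n}\right\rceil .$$
The coefficient $c_i$ of $x^i$ in $L_1$ is the number of $j$ with $e(j)=i+1$. Hence the claim $c_i=c_{d-1-i}$ is equivalent to the statement that the exponent multiset $\{e(j)\}$ is invariant under $e\mapsto d+1-e$. It suffices to find a bijection $j\mapsto j'$ of $\{0,\ldots,n-1\}$ with $e(j)+e(j')=d+1$. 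I will take $j'=n-1-j$.

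As in the preceding proposition, I adjoin $q_{d+1}=-1$. Its term $\lceil (q_{d+1}j+q_{d+1}^+)/n\rceil=0$, so $e(j)=\sum_{i=1}^{d+1}\lceil (q_ij+q_i^+)/n\rceil$, and now $\sum_{i=1}^{d+1}q_i=0$. I will show that each single index $i$ satisfies
$$\left\lceil \frac{q_ij+q_i^+}{n}\right\rceil+\left\lceil \frac{q_i(n-1-j)+q_i^+}{n}\right\rceil=q_i+1 .$$
Summing over $i=1,\ldots,d+1$ then gives $e(j)+e(n-1-j)=\sum_i q_i+(d+1)=d+1$, as required. Throughout I use that $a_i=n/|q_i|$ is a positive integer; here $q_i\neq 0$ is implicit in $q_i\mid n$, and $q_{d+1}=-1$ trivially divides $n$.

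The per-index identity splits into two cases.

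\emph{Case $q_i>0$.} The first term is $\lceil (j+1)/a_i\rceil=1+\lfloor j/a_i\rfloor$. The second term is $\lceil (n-j)/a_i\rceil=q_i-\lfloor j/a_i\rfloor$, since $n=q_ia_i$ and $\lceil -j/a_i\rceil=-\lfloor j/a_i\rfloor$. The sum is $q_i+1$.

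\emph{Case $q_i=-v<0$.} Here $q_i^+=0$ and $n=va_i$. The first term is $-\lfloor j/a_i\rfloor$. The second term is $-\lfloor (va_i-1-j)/a_i\rfloor$. Writing $j=ta_i+r$ with $0\le r<a_i$ gives $\lfloor (va_i-1-j)/a_i\rfloor=v-1-t$, because $(a_i-1-r)/a_i\in[0,1)$. The sum is therefore $-t-(v-1-t)=1-v=q_i+1$. The case $q_{d+1}=-1$ (so $a=n$, both terms $0$) is consistent with this.

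The main obstacle is bookkeeping rather than any conceptual difficulty. First, one must handle the shift by $q_i^+$ correctly, so that the pairing is $j\leftrightarrow n-1-j$ rather than $j\leftrightarrow n-j$ (the latter would fail at $j=0$). Second, one must use the divisibility $q_i\mid n$ in exactly the ceiling/floor conversions above; without it the per-index identity can fail. Once the per-index identity is checked, symmetry of $L_1$ follows immediately from the involution.
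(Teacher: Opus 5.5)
Your proof is correct and follows essentially the same route as the paper: you use the involution $j\mapsto n-1-j$, prove $A(j)+A(n-1-j)=d+1$, and rely on $q_i\mid n$ only for the per-coordinate floor/ceiling conversions. The differences are packaging. You check the single per-index identity $\lceil (q_ij+q_i^+)/n\rceil+\lceil (q_i(n-1-j)+q_i^+)/n\rceil=q_i+1$, where the paper splits the same computation into $A(n-1-j)=1-B(j)$ and $A(j)-B(j)=d$. Adjoining $q_{d+1}=-1$ is harmless but unnecessary, since summing over $i\le d$ already gives $\sum_{i=1}^d q_i+d=d+1$.
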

\begin{proof}
Let $L(x)=xL_1(x)$. To prove that the polynomial $L_1(x)$ is symmetric, we need to show that $L(x) = x^{d+1} L(1/x)$.
Recall that
\begin{align*}
L(x) = \sum_{j=0}^{n-1} x^{A(j)},\quad \text{where}\quad
A(j) = \left\lceil{\frac{q_1j+q_1^+}{n}}\right\rceil+\cdots+\left\lceil{\frac{q_d j+q_d^+}{n}}\right\rceil.
\end{align*}
Since $q_i \mid n$ for all $i$ and $q_d = 1 - (q_1 + \cdots + q_{d-1})$, we have $\sum_{i=1}^{d} q_i = 1$.
Let $\{q_{k_1},\ldots,q_{k_s}\}$ and $\{q_{k_{s+1}},\ldots,q_{k_d}\}$ be the sets of positive and negative integers in $\{q_1,\ldots,q_d\}$, respectively.
Then we have
\begin{align*}
A(n-1-j) &= \left\lceil{\frac{q_1(n-1-j)+q_1^+}{n}}\right\rceil+\cdots+\left\lceil{\frac{q_d (n-1-j)+q_d^+}{n}}\right\rceil
\\&= \sum_{i=1}^{d} q_i + \sum_{i=1}^{s} \left\lceil -\frac{q_{k_i} j}{n} \right\rceil + \sum_{i=s+1}^{d} \left\lceil -\frac{q_{k_i} (j+1)}{n} \right\rceil.
\end{align*}
Using the identity $\lceil -r \rceil = -\lfloor r \rfloor$ and the fact that $\sum_{i=1}^{d} q_i = 1$, we obtain
$$A(n-1-j) = 1 - \left( \sum_{i=1}^{s} \left\lfloor \frac{q_{k_i} j}{n} \right\rfloor + \sum_{i=s+1}^{d} \left\lfloor \frac{q_{k_i} (j+1)}{n} \right\rfloor \right).$$
Let
$$B(j) = \sum_{i=1}^{s} \left\lfloor \frac{q_{k_i} j}{n} \right\rfloor + \sum_{i=s+1}^{d} \left\lfloor \frac{q_{k_i} (j+1)}{n} \right\rfloor.$$
Since $q_i \mid n$, we set $a_i = n/q_i$ (which is an integer).
Now consider the difference $A(j) - B(j)$:
$$A(j) - B(j) = \left( \sum_{i=1}^{s} \left\lceil \frac{q_{k_i} (j+1)}{n} \right\rceil + \sum_{i=s+1}^{d} \left\lceil \frac{q_{k_i} j}{n} \right\rceil \right) - \left( \sum_{i=1}^{s} \left\lfloor \frac{q_{k_i} j}{n} \right\rfloor + \sum_{i=s+1}^{d} \left\lfloor \frac{q_{k_i} (j+1)}{n} \right\rfloor \right)=d,$$
since for $i = 1, \dots, s$, ($a_{k_i}>0$), we have
$$\left\lceil \frac{q_{k_i} (j+1)}{n} \right\rceil - \left\lfloor \frac{q_{k_i} j}{n} \right\rfloor = \left\lceil \frac{j+1}{a_{k_i}} \right\rceil - \left\lfloor \frac{j}{a_{k_i}} \right\rfloor = 1,$$
and for $i = s+1, \dots, d$,  ($a_{k_i}<0$), we have
$$\left\lceil \frac{q_{k_i} j}{n} \right\rceil - \left\lfloor \frac{q_{k_i} (j+1)}{n} \right\rfloor = \left\lceil \frac{j}{a_{k_i}} \right\rceil - \left\lfloor \frac{j+1}{a_{k_i}} \right\rfloor = 1.$$
It follows that
$$A(j) + A(n-1-j) = A(j) + (1 - B(j)) = 1 + (A(j) - B(j)) = 1 + d.$$
We now obtain
\begin{align*}
L(x)= \sum_{j=0}^{n-1} x^{A(j)} = \sum_{j=0}^{n-1} x^{A(n-1-j)} = \sum_{j=0}^{n-1} x^{d+1 - A(j)}
= x^{d+1} \sum_{j=0}^{n-1} x^{-A(j)} = x^{d+1} L(1/x).
\end{align*}
Therefore, $L(x)$ (i.e., $L_1(x)$) is a symmetric polynomial. The conclusion holds.
\end{proof}

Note that the polynomial $L_1(x)$ need not be unimodal. For example, when
$$(q_1,q_2,q_3,q_4,q_5,q_6,q_7)=(1,2,3,3,4,-5,-7)$$
and $n=420$, we have $L_1(x)=159x^4+102x^3+159x^2$.

\begin{prop}
The characteristic polynomial $L_1(x)$ is unimodal for $d=3,4$.
\end{prop}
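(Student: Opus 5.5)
By Proposition~\ref{Proposition-Symmetric-L1}, $L_1(x)=\sum_{i=0}^{d-1}c_ix^i$ is symmetric, and its coefficients are nonnegative integers because $L_1$ is a sum of monomials. I therefore plan to reduce unimodality to the single inequality $c_0\le c_1$. For $d=3$ we have $L_1(x)=c_0+c_1x+c_0x^2$, and for $d=4$ we have $L_1(x)=c_0+c_1x+c_1x^2+c_0x^3$. In both cases $L_1$ is unimodal exactly when $c_0\le c_1$, or else $c_0=0$. So it suffices to show that $c_0>0$ implies $c_0\le c_1$.

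To compare $c_0$ and $c_1$ I will not work with the floor-function expression for $A(j)$ directly. Instead I will let $m$ vary and use the decomposition of Theorem~\ref{thm-dec-UnEX}:
$$h^*_{\Delta(0,q^{(m)})}(x)=m\,xL_1(x)+L_2(x).$$
Reading off coefficients gives $h^*_i=m\,c_{i-1}+[x^i]L_2(x)$ for $1\le i\le d$, where $L_2$ does not depend on $m$. In particular $h^*_d=m\,c_{d-1}+[x^d]L_2=m\,c_0+[x^d]L_2$, using symmetry. If $c_0>0$, then $h^*_d>0$ for all sufficiently large $m$. By \eqref{formula-h^*-prop}, this means $\Delta(0,q^{(m)})$ has an interior lattice point.

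Next I will invoke Hibi's lower bound theorem for $h^*$-vectors. It states that if a lattice polytope $\mathcal P$ of dimension $d$ has an interior lattice point, then $h^*_1\le h^*_i$ for every $1\le i\le d-1$. Applying it with $i=2$ (valid since $d\ge3$) to $\Delta(0,q^{(m)})$ for large $m$ gives
$$m\,c_0+[x^1]L_2\;\le\; m\,c_1+[x^2]L_2 .$$
Dividing by $m$ and letting $m\to\infty$ yields $c_0\le c_1$, which completes the argument in both dimensions.

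The main point needing care is that Hibi's inequality is not among the results stated earlier in the paper, so it has to be cited. One must also check that the coefficient shift is correct, namely that $h^*_i$ pairs with $c_{i-1}$ because of the factor $x$. If one preferred a self-contained proof, the fallback would be a direct injection from $\{j: A(j)=1\}$ into $\{j:A(j)=2\}$. This would use the fractional-part expression for $A(j)$ from the preceding proposition. I expect that route to be much messier, which is why I would try the asymptotic argument via Hibi's theorem first.
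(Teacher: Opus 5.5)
Your proof is correct and takes essentially the same route as the paper: symmetry of $L_1$ reduces unimodality to $c_0\le c_1$, and Hibi's lower bound $h_1^*\le h_2^*$, applied to $\Delta(0,q^{(m)})$ as $m\to\infty$, forces that inequality. Your version is slightly more careful than the paper's, since you explicitly check Hibi's interior-lattice-point hypothesis (via $h_d^*=mc_0+[x^d]L_2(x)>0$ for large $m$ when $c_0>0$) and treat $c_0=0$ separately; the paper's proof passes over both points.
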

\begin{proof}
Let $h^*_{\Delta(0,q^{(m)})}(x)=mx\cdot L_1(x) +L_2(x)=1+h_1^*x+\cdots+h_d^*x^d$.
By Hibi’s lower bound theorem \cite{Hibi1994}, we have $h_1^*\leq h_i^* \text{ for all } 1\leq i\leq d-1$.
Let $mxL_1(x)=m(c_1x+c_2x^2+\cdots+c_dx^d)$.
For $d=3$, if $L_1(x)$ is not unimodal, then it must admit $c_1>c_2<c_3$. For sufficiently large $m$, this yields $h_1^*>h_2^*$, leading to a contradiction.
For $d=4$, if $L_1(x)$ is not unimodal, then it must admit $c_1>c_2=c_3<c_4$ (by Proposition \ref{Proposition-Symmetric-L1}). For sufficiently large $m$, this yields $h_1^*>h_2^*$, leading to a contradiction.
Then the conclusion follows.
\end{proof}

We now present two interesting families $\Delta(0,q^{(m)})$ with closed formula.
\begin{cor}
Let $q=(-1,-1, \dots, -1,d)\in \mathbb{Z}^d$.
Then we have
$$h^*_{\Delta(0,q^{(m)})}(x)=m\sum_{i=1}^d x^i+(1-x^d).$$
\end{cor}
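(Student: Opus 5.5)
The plan is to apply Theorem~\ref{thm-dec-UnEX} directly, since this is the general decomposition $h^*_{\Delta(0,q^{(m)})}(x)=mx\,L_1(x)+L_2(x)$ specialized to one vector $q$. Everything reduces to evaluating the ceiling sums that define $L(x)=xL_1(x)$ and $h^*_{\Delta(0,q)}(x)$ (the latter via Theorem~\ref{thm-Delta}).

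First I check the hypotheses. Here $q_1=\cdots=q_{d-1}=-1$ and $n=d$, so
$$q_d=1-\sum_{i=1}^{d-1}q_i=1+(d-1)=d.$$
Every $q_i\in\{-1,d\}$ divides $n=d$, so Theorem~\ref{thm-dec-UnEX} applies. We also have $q_i^+=0$ for $i\le d-1$ and $q_d^+=d$.

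Next I compute $L(x)$. For $0\le j\le d-1$ the exponent is
$$A(j)=(d-1)\left\lceil \frac{-j}{d}\right\rceil+\left\lceil \frac{dj+d}{d}\right\rceil .$$
Since $0\le j<d$, we have $\lceil -j/d\rceil=0$, and $\lceil (dj+d)/d\rceil=j+1$ exactly. Hence $A(j)=j+1$ and
$$L(x)=\sum_{j=0}^{d-1}x^{j+1}=\sum_{i=1}^d x^i .$$

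Then I compute $h^*_{\Delta(0,q)}(x)$ from Theorem~\ref{thm-Delta}. The exponent is
$$(d-1)\left\lceil \frac{-j}{d}\right\rceil+\left\lceil \frac{dj}{d}\right\rceil=0+j=j,$$
so $h^*_{\Delta(0,q)}(x)=1+x+\cdots+x^{d-1}$.

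Finally, by the definition of $L_2$ in Theorem~\ref{thm-dec-UnEX},
$$L_2(x)=h^*_{\Delta(0,q)}(x)-L(x)=(1+x+\cdots+x^{d-1})-(x+\cdots+x^d)=1-x^d.$$
Combining with $mx\,L_1(x)=mL(x)=m\sum_{i=1}^d x^i$ gives the claimed formula.

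As a consistency check, $L_1(1)=d=n$, $L_2(0)=1$ and $L_2(1)=0$, matching Proposition~\ref{Prop-L1L2-pro}. I expect no real obstacle here; the only care needed is evaluating the ceilings at the boundary cases $j=0$ and $j=d-1$, and both are exact.
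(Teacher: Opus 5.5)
Your proposal is correct and follows the paper's own proof. Both specialize Theorem~\ref{thm-dec-UnEX} with $n=q_d=d$ and use $\lceil -j/d\rceil=0$ for $0\le j\le d-1$, which gives $xL_1(x)=\sum_{i=1}^d x^i$ and $h^*_{\Delta(0,q)}(x)=\sum_{j=0}^{d-1}x^j$, hence $L_2(x)=1-x^d$. Your explicit check of the divisibility hypothesis and your consistency check against Proposition~\ref{Prop-L1L2-pro} are small additions that the paper leaves implicit.
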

\begin{proof}
Since $n=d$ and $q_{d}=d$, we have
\begin{align*}
  xL_1(x) & =\sum_{j=0}^{d-1} x^{\lceil \frac{-j}{d}\rceil(d-1) + \lceil j+1\rceil} = \sum_{i=1}^d x^i \\
  L_2(x) & =\sum_{j=0}^{d-1} x^{\lceil \frac{-j}{d}\rceil(d-1) + \lceil j\rceil} - \sum_{i=1}^d x^i = 1-x^d.
\end{align*}
The conclusion follows.
\end{proof}

A simplex whose $h^*$-polynomial is $\sum_{i=0}^d x^i$ is constructed in \cite[Example 2.7]{Ferroni23}.

\begin{cor}
Let $q=(-2^0,-2^1, \dots, -2^{d-2},2^{d-1})\in \mathbb{Z}^d$.
Then we have
$$h^*_{\Delta(0,q^{(m)})}(x)=((m-1)x+1)(1+x)^{d-1}.$$
\end{cor}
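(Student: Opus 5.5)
The plan is to apply Theorem~\ref{thm-dec-UnEX} directly, so that $h^*_{\Delta(0,q^{(m)})}(x)=m\,L(x)+L_2(x)$ with $L(x)=xL_1(x)$ and $L_2(x)=h^*_{\Delta(0,q)}(x)-L(x)$. Both $L(x)$ and $h^*_{\Delta(0,q)}(x)$ will then be evaluated in closed form through binary digit sums.

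\textbf{Checking the hypotheses.} Here $n=2^{d-1}$ and $q_i=-2^{i-1}$ for $1\le i\le d-1$. Hence
\[
q_d=1+\sum_{i=1}^{d-1}2^{i-1}=2^{d-1}=n .
\]
Every $q_i$ divides $n$, so Theorem~\ref{thm-dec-UnEX} applies. Since $q_i^+=0$ for $i\le d-1$ and $q_d^+=n$, the exponent in $L(x)$ for $0\le j\le n-1$ is
\[
A(j)=\sum_{i=1}^{d-1}\left\lceil \frac{-j}{2^{d-i}}\right\rceil+\left\lceil\frac{nj+n}{n}\right\rceil
= j+1-\sum_{k=1}^{d-1}\left\lfloor \frac{j}{2^k}\right\rfloor .
\]

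\textbf{Key identity.} By Legendre's formula for the $2$-adic valuation of $j!$, we have $j-\sum_{k\ge1}\lfloor j/2^k\rfloor=s_2(j)$, the number of ones in the binary expansion of $j$. Because $j<2^{d-1}$, the terms with $k\ge d$ vanish, so the finite sum already gives $s_2(j)$. It follows that $A(j)=1+s_2(j)$.

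As $j$ runs over $0,\ldots,2^{d-1}-1$, it runs over all $(d-1)$-bit strings. Therefore
\[
L(x)=x\sum_{j}x^{s_2(j)}=x(1+x)^{d-1}.
\]

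\textbf{The case $m=1$.} By Theorem~\ref{thm-Delta}, $h^*_{\Delta(0,q)}(x)=\sum_{j=0}^{n-1}x^{E(j)}$, where
\[
E(j)=\sum_{i=1}^{d-1}\lceil -j/2^{d-i}\rceil+\lceil j\rceil = s_2(j).
\]
Hence $h^*_{\Delta(0,q)}(x)=(1+x)^{d-1}$. Consequently
\[
L_2(x)=(1+x)^{d-1}-x(1+x)^{d-1},
\]
and therefore
\[
h^*_{\Delta(0,q^{(m)})}(x)=mx(1+x)^{d-1}+(1-x)(1+x)^{d-1}=((m-1)x+1)(1+x)^{d-1}.
\]

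\textbf{Main obstacle and sanity checks.} The only real step is recognizing the ceiling sums as a binary digit sum via Legendre's formula. Everything else is substitution into results already established. As consistency checks, the formula gives $L_1(1)=2^{d-1}=n$ and $L_2(1)=0$, as required by Proposition~\ref{Prop-L1L2-pro}, and $L_1(x)=(1+x)^{d-1}$ is symmetric, as required by Proposition~\ref{Proposition-Symmetric-L1}.
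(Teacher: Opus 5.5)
Your proof is correct and follows essentially the same route as the paper: you apply Theorem~\ref{thm-dec-UnEX}, reduce both $xL_1(x)$ and $h^*_{\Delta(0,q)}(x)$ to sums of $x^{1+s_2(j)}$ and $x^{s_2(j)}$ over $0\le j\le 2^{d-1}-1$, and combine. The only difference is that you cite Legendre's formula for the identity $j-\sum_{k\ge 1}\lfloor j/2^k\rfloor=s_2(j)$, whereas the paper verifies this binary digit-sum identity directly by expanding $j$ in base $2$.
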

\begin{proof}
Since $n=2^{d-1}$ and $q_{d}=2^{d-1}$, we have
\begin{align*}
xL_1(x)&=\sum_{j=0}^{2^d-1} x^{\left\lceil\frac{-j}{2^{d-1}}\right\rceil+\left\lceil\frac{-2j}{2^{d-1}}\right\rceil+\cdots + \left\lceil\frac{-2^{d-2}j}{2^{d-1}}\right\rceil +\left\lceil\frac{2^{d-1}j+2^{d-1}}{2^{d-1}}\right\rceil}
\\& =\sum_{j=0}^{2^d-1} x^{-\left(\left\lfloor \frac{j}{2^{d-1}}\right\rfloor+\left\lfloor \frac{j}{2^{d-2}}\right\rfloor +\cdots + \left\lfloor \frac{j}{2}\right\rfloor\right)+(j+1)}.
\end{align*}
Let $j=\sum_{k=0}^{d-2} b_k 2^k$ be the binary representation of $j$ ($0\leq j\leq 2^{d-1}-1$), where $b_k\in\{0,1\}$.
Therefore, we get
$$\sum_{m=1}^{d-1}\left\lfloor \frac{j}{2^m}\right\rfloor = \sum_{m=1}^{d-1}\sum_{k=m}^{d-2} b_k 2^{k-m}
=\sum_{k=1}^{d-2} b_k \sum_{m=1}^k 2^{k-m}=\sum_{k=1}^{d-2} b_k(2^k-1).$$
It follows that
$$xL_1(x)=\sum_{j=0}^{2^d-1} x^{-\sum_{k=1}^{d-2} b_k(2^k-1)+\sum_{k=0}^{d-2} b_k 2^k + 1}
=\sum_{j=0}^{2^d-1} x^{1+\sum_{k=0}^{d-2} b_k}.$$
Because $\sum_{k=0}^{d-2} b_k$ is the number of 1's in the binary representation of $j$, we have
$$xL_1(x)=x\sum_{j=0}^{2^d-1} x^{\sum_{k=0}^{d-2} b_k}=x\sum_{i=0}^{d-1}\binom{d-1}{i} x^i=x(1+x)^{d-1}.$$
On the other hand, the polynomial $L_2(x)$ is given by
\begin{align*}
L_2(x)&=\sum_{j=0}^{2^d-1} x^{-(\lfloor \frac{j}{2^{d-1}}\rfloor+\lfloor \frac{j}{2^{d-2}}\rfloor +\cdots + \lfloor \frac{j}{2}\rfloor)+j}-x(1+x)^{d-1}
\\ &= (1+x)^{d-1}-x(1+x)^{d-1}
\\ &= (1-x)(1+x)^{d-1}.
\end{align*}
Thus, the $h^*$-polynomial of $\Delta(0,q^{(m)})$ is given by
$$h^*_{\Delta(0,q^{(m)})}(x)=mxL_1(x)+L_2(x)=((m-1)x+1)(1+x)^{d-1}.$$
This completes the proof.
\end{proof}

\section{Eulerian simplex family: $\mathcal{R}_d(m)$}\label{Section-Six}

\subsection{A new formula for Eulerian polynomials}

Let $\mathfrak{S}_d$ be the set of all permutations on $d$ elements. For a permutation $\pi \in \mathfrak{S}_d$, a \emph{descent} is an index $j \in \{1, \dots, d-1\}$ such that $\pi_j > \pi_{j+1}$, and we write $\mathrm{des}(\pi)$ for the number of descents of $\pi$. The \emph{Eulerian number} $A(d,i)$ counts permutations in $\mathfrak{S}_d$ with exactly $i-1$ descents. The \emph{Eulerian polynomial} is then
$$A_d(x) = \sum_{i=1}^d A(d,i) x^i = \sum_{\pi \in \mathfrak{S}_d} x^{1+\mathrm{des}(\pi)}.$$

It is well known \cite[P292, Exercise 3]{Comtet} that all roots of $A_d(x)/x$ are simple, real, and strictly negative.

\begin{prop}{\em \cite[Proposition 1.1.4]{RP.Stanley}}\label{Generating Function for Eulerian Polynomials}
For every $d\geq 0$, the Eulerian polynomials satisfy the following generating function identity
$$\sum_{t \geq 0} t^d x^t = \frac{A_d(x)}{(1-x)^{d+1}}.$$
\end{prop}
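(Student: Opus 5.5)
The statement is the classical generating function identity $\sum_{t\ge 0} t^d x^t = A_d(x)/(1-x)^{d+1}$, and I will prove it via the unit cube rather than by induction on $d$. The point is that $t^d = i([0,1]^d,t)$ counts lattice points in $t[0,1]^d$, so the left side is the Ehrhart series of the unit cube. (For $d=0$ the identity reads $\sum_t x^t = 1/(1-x)$, since $0^0=1$ and $A_0(x)=1$, so I assume $d\ge 1$.) The plan is to triangulate the cube into unimodular simplices and compute the Ehrhart series as a disjoint sum over half-open simplices.

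\textbf{Step 1 (triangulation).} I use the standard triangulation of $[0,1]^d$ into the $d!$ simplices
\[
\Delta_\pi=\{y: 1\ge y_{\pi_1}\ge \cdots\ge y_{\pi_d}\ge 0\}, \qquad \pi\in\mathfrak{S}_d .
\]
Each $\Delta_\pi$ is unimodular, with fundamental parallelepiped containing only the origin. Every point of the cube lies in the simplex determined by sorting its coordinates.

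\textbf{Step 2 (half-open decomposition).} To get a disjoint union, I break ties canonically. Given $y\in[0,1]^d$, I sort its coordinates decreasingly, ordering equal coordinates by increasing index. This assigns $y$ to a unique $\pi$, and $y$ satisfies $y_{\pi_j}>y_{\pi_{j+1}}$ exactly at the descents $j$ of $\pi$. Thus the cube is the disjoint union of the half-open simplices $\Delta_\pi$ with the facets $\{y_{\pi_j}=y_{\pi_{j+1}}\}$ removed for $j$ a descent.

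\textbf{Step 3 (series of each piece).} In coordinates $z_0=1-y_{\pi_1}$, $z_j=y_{\pi_j}-y_{\pi_{j+1}}$ for $1\le j\le d-1$, and $z_d=y_{\pi_d}$, the $t$-th dilate of $\Delta_\pi$ becomes $\{z\in\mathbb{Z}_{\ge0}^{d+1}:\sum z_j=t\}$. Removing $k$ facets means imposing $z_j\ge1$ at $k=\mathrm{des}(\pi)$ coordinates. The generating function of that piece is therefore $x^{\mathrm{des}(\pi)}/(1-x)^{d+1}$.

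\textbf{Step 4 (summing).} Summing over $\pi\in\mathfrak{S}_d$ gives
\[
\sum_{t\ge0}(t+1)^d x^t=\frac{\sum_{\pi} x^{\mathrm{des}(\pi)}}{(1-x)^{d+1}}.
\]
Multiplying by $x$ and reindexing $t+1\mapsto t$ (the $t=0$ term vanishes since $d\ge 1$) yields the claim with $A_d(x)=\sum_\pi x^{1+\mathrm{des}(\pi)}$.

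The main obstacle is Step 2: verifying that the tie-breaking rule really gives a partition and that the removed facets are exactly those indexed by descents. The check is that for $j$ not a descent ($\pi_j<\pi_{j+1}$) equality $y_{\pi_j}=y_{\pi_{j+1}}$ is permitted, while at a descent equality would force the sorted order to put $\pi_{j+1}$ before $\pi_j$. If this bookkeeping becomes messy, I will fall back on induction: apply $x\frac{d}{dx}$ to $\sum_t t^{d-1}x^t$ and check the recurrence $A_d(x)=(1+(d-1)x)\,x\,A_{d-1}(x)\cdot x^{-1}+x(1-x)A_{d-1}'(x)$ via insertion of $d$ into a permutation of $[d-1]$.
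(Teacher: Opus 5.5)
Your main argument is correct, and it takes a different route from the paper only in the sense that the paper gives no proof at all: it simply quotes Stanley. The natural comparison is therefore with the standard textbook argument behind that citation, which is purely combinatorial. Every $f\colon[d]\to[t]$ has a unique ``compatible'' permutation $w$ with $f(w_1)\ge\cdots\ge f(w_d)$ and strict inequality at the descents of $w$. For fixed $w$ there are $\binom{t-\mathrm{des}(w)+d-1}{d}$ such $f$, with generating function $x^{1+\mathrm{des}(w)}/(1-x)^{d+1}$.

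Your half-open triangulation of the cube is the geometric form of exactly this argument. Your tie-breaking rule in Step~2 is the compatibility condition, and your coordinates $z_j$ in Step~3 are the substitution that turns weakly decreasing sequences with prescribed strict steps into weak compositions. The combinatorial version is shorter and avoids your index shift, because it counts $\{1,\dots,t\}^d$ and lands directly on $t^d$. Your version has the merit of living inside the paper's own Ehrhart framework: it computes $h^*_{[0,1]^d}(x)=\sum_{\pi}x^{\mathrm{des}(\pi)}=A_d(x)/x$. That explains why Corollary~\ref{Ehrhart-Series-of-Sdm} at $m=1$ gives $i(\mathcal{S}_d(1),t)=(t+1)^d$, i.e.\ $\mathcal{S}_d(1)$ has the Ehrhart polynomial of the unit cube.

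There are two slips to fix, neither of which affects the main line.

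\begin{enumerate}
\item Your opening claim $t^d=i([0,1]^d,t)$ is false. In fact $i([0,1]^d,t)=(t+1)^d$, while $t^d$ counts $\{1,\dots,t\}^d$. Step~4 uses the correct count, and multiplying by $x$ repairs the shift, so only that sentence is wrong.
\item The fallback recurrence as written, $(1+(d-1)x)A_{d-1}(x)+x(1-x)A_{d-1}'(x)$, gives $2x$ at $d=2$ instead of $A_2(x)=x+x^2$. That is the recurrence for $\sum_\pi x^{\mathrm{des}(\pi)}$. For $A_d(x)=\sum_\pi x^{1+\mathrm{des}(\pi)}$, applying $x\frac{d}{dx}$ to the case $d-1$ yields $A_d(x)=dx\,A_{d-1}(x)+x(1-x)A_{d-1}'(x)$.
\end{enumerate}

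Finally, your handling of $d=0$ through the convention $A_0(x)=1$ is the right one. Neither form of the paper's definition gives this value (the sum over $i$ is empty, and the empty permutation contributes $x$), so the case $d=0$ genuinely needs that convention.
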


Let $[0,d]_\Z=\{0,1,\ldots,d\}$. There exists a one-to-one correspondence between the $\mathfrak{S}_d$ and $[0,d!-1]_\Z$.
A pair $(\pi_i,\pi_j)$ is called an \emph{inversion} of $\pi$ if $i<j$ and $\pi_i>\pi_j$.
Let $c_i=\#\{i<j \mid \pi_i>\pi_j \}$ for $1\leq i\leq d$.
The sequence $I(\pi)=(c_1,c_2,\ldots,c_d)$ is called the \emph{inversion table} (or \emph{Lehmer code}) of $\pi$.
Therefore, for a given $\pi$, the number of descents of $\pi$ equals the number of pairs with $c_i>c_{i+1}$.

\begin{lem}{\em \cite[Proposition 1.3.12]{RP.Stanley}}\label{Permu-to-inverTab}
Let
$$\mathcal{I}_d=\{(c_1,\ldots, c_d)\mid 0\leq c_i\leq d-i \}=[0,d-1]_\Z\times [0,d-2]_\Z\times \cdots \times [0,0]_\Z.$$
The map $I: \mathfrak{S}_d\rightarrow \mathcal{I}_d$ that sends each permutation to its inversion table is a bijection.
\end{lem}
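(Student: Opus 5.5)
The plan is to build the inverse map explicitly and then show it is a two-sided inverse of $I$. Since both sets are finite, it would also suffice to show that $I$ is injective and that $|\mathfrak{S}_d|=|\mathcal{I}_d|$. The construction is the more transparent route.

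First, $I$ lands in $\mathcal{I}_d$. For $\pi\in\mathfrak{S}_d$, the entry $c_i$ counts indices $j$ with $i<j\le d$ and $\pi_i>\pi_j$. There are only $d-i$ indices $j>i$, so $0\le c_i\le d-i$ and $I(\pi)\in\mathcal{I}_d$. Counting gives
$$|\mathcal{I}_d|=d\cdot(d-1)\cdots 1=d!=|\mathfrak{S}_d|,$$
so injectivity alone would already finish the proof.

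Next, the inverse. Given $(c_1,\ldots,c_d)\in\mathcal{I}_d$, recover $\pi$ from left to right. Suppose $\pi_1,\ldots,\pi_{i-1}$ have been chosen. The set of unused values $U_i=[1,d]\setminus\{\pi_1,\ldots,\pi_{i-1}\}$ has $d-i+1$ elements. By definition, $c_i$ equals the number of later entries smaller than $\pi_i$. The later entries are exactly $U_i\setminus\{\pi_i\}$, so $c_i$ is the number of elements of $U_i$ smaller than $\pi_i$. Hence $\pi_i$ must be the $(c_i+1)$-th smallest element of $U_i$. This element exists precisely because $0\le c_i\le d-i=|U_i|-1$.

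This rule defines a map $J:\mathcal{I}_d\to\mathfrak{S}_d$, and the same observation gives both compositions:
\begin{itemize}
\item $J\circ I=\mathrm{id}$: every $\pi$ is forced by its table, which is also injectivity of $I$.
\item $I\circ J=\mathrm{id}$: the permutation built from $(c_i)$ has, at position $i$, exactly $c_i$ smaller values among the later positions.
\end{itemize}
So $I$ is a bijection.

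The only step that needs care is the identity "number of later entries smaller than $\pi_i$ $=$ rank of $\pi_i$ in $U_i$ minus one", which rests on the later entries being exactly $U_i\setminus\{\pi_i\}$. Once that is stated, the rest is bookkeeping.
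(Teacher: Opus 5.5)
Your proof is correct and follows essentially the same route as the paper, which cites Stanley and then describes exactly this inverse: at each step take the $(c_i+1)$-th smallest remaining element as $\pi_i$. Your observation that the later entries are exactly $U_i\setminus\{\pi_i\}$ is the one fact the paper leaves implicit, and it is what justifies that this recipe is a two-sided inverse of $I$.
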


For a given $(c_1,c_2,\ldots, c_d)$, we briefly describe how to return $\pi=I^{-1}((c_1,c_2,\ldots, c_d))\in \mathfrak{S}_d$.
To construct this permutation, begin with $\pi=()$ and the initial available set $\{1,2,\ldots,d\}$.
We construct $\pi$ by successively choosing the $(c_i+1)$-th smallest remaining element as $\pi_i$ and removing it from the set.
For example, given $(c_1,c_2,c_3,c_4,c_5)=(2,0,0,1,0)$, we return $\pi=31254$.

\begin{lem}
Given a permutation $\pi\in\mathfrak{S}_d$, suppose the inversion table of $\pi$ is $I(\pi)=(c_1,c_2,\ldots,c_d)$.
Let
$$\aleph(\pi)=c_1\cdot (d-1)!+c_2\cdot (d-2)!+\cdots +c_{d-1}\cdot 1! + c_d\cdot 0!.$$
The map $\aleph: \mathfrak{S}_d\rightarrow [0,d!-1]_\Z$ that sends each permutation to $[0,d!-1]_\Z$ is a bijection.
\end{lem}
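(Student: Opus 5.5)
The map $\aleph$ is the composite of two maps. The first is the inversion-table map $I:\mathfrak{S}_d\to\mathcal{I}_d$, which is a bijection by Lemma~\ref{Permu-to-inverTab}. The second is the map
\[
\mathcal{I}_d\to[0,d!-1]_\Z,\qquad (c_1,\ldots,c_d)\mapsto \sum_{i=1}^d c_i\,(d-i)!.
\]
So it suffices to show that this second map, the factorial number system (factoradic) map, is a bijection.

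\textbf{Well-definedness.} The image lies in $[0,d!-1]_\Z$. Since $0\le c_i\le d-i$, the sum is nonnegative. It is also bounded above:
\[
\sum_{i=1}^d c_i(d-i)!\le \sum_{k=0}^{d-1}k\cdot k!=\sum_{k=0}^{d-1}\bigl((k+1)!-k!\bigr)=d!-1,
\]
using the telescoping identity $k\cdot k!=(k+1)!-k!$.

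\textbf{Cardinality.} Both sets have the same size, since
\[
|\mathcal{I}_d|=\prod_{i=1}^d (d-i+1)=d!=|[0,d!-1]_\Z|.
\]
Hence injectivity will suffice.

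\textbf{Injectivity.} Suppose two tables $(c_i)\ne(c_i')$ have the same image, and let $i_0$ be the first index where they differ, say $c_{i_0}>c'_{i_0}$. Then the difference of the images is at least
\[
(d-i_0)!-\sum_{i>i_0}(d-i)(d-i)! .
\]
By the same telescoping identity, $\sum_{i>i_0}(d-i)(d-i)!=(d-i_0)!-1$, so the difference is at least $1>0$, a contradiction.

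Alternatively, surjectivity can be shown constructively. Given $N\in[0,d!-1]_\Z$, set $c_1=\lfloor N/(d-1)!\rfloor$, which is at most $d-1$ because $N<d!$. Then recurse on the remainder $N-c_1(d-1)!<(d-1)!$.

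The only step requiring any care is the telescoping bound, which is used twice. No real obstacle is expected.
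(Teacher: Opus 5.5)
Your proof is correct and takes essentially the paper's route: compose the inversion-table bijection of Lemma~\ref{Permu-to-inverTab} with the factorial-base map $(c_1,\ldots,c_d)\mapsto\sum_i c_i(d-i)!$. The paper proves surjectivity by repeated Euclidean division (your alternative) and calls the range bound clear; your main line instead proves that bound and injectivity with the telescoping identity $k\cdot k!=(k+1)!-k!$, then uses the count $|\mathcal{I}_d|=d!$, which fills in details the paper leaves implicit.
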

\begin{proof}
It is clear that $\aleph(\pi)\in [0,d!-1]_\Z$.
For any integer $j\in [0,d!-1]_\Z$, by repeatedly applying the Euclidean division algorithm with respect to $(d-1)!, (d-2)!, \dots, 1!$, we obtain the factorial base coefficients: $c_1, c_2, \dots, c_{d-1}$.
The proof is now complete by Lemma \ref{Permu-to-inverTab}.
\end{proof}

\begin{thm}
Let $0\leq N < d!$ with $d\geq 2$. Then we have
\begin{align}\label{Descent-Aleph-formul}
\mathrm{des}(\aleph^{-1}(N))=N-\sum_{i=0}^{d-2} \left\lfloor \frac{N}{i!(i+2)} \right\rfloor.
\end{align}
\end{thm}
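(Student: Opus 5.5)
The plan is to prove \eqref{Descent-Aleph-formul} by induction on $N$, comparing the increments of both sides as $N$ goes from $N-1$ to $N$. The base case $N=0$ is immediate: the inversion table is $(0,\dots,0)$, so $\aleph^{-1}(0)$ is the identity permutation with no descents, and the right-hand side is also $0$.

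\textbf{Step 1: rewrite descents in factorial-base digits.} Write $N=\sum_{k=0}^{d-1} b_k\, k!$ with $b_k=c_{d-k}\in[0,k]$, so that $b_0=0$. The pair $(c_i,c_{i+1})$ becomes $(b_{k+1},b_k)$ with $k=d-1-i$. By the remark before Lemma \ref{Permu-to-inverTab}, $\mathrm{des}(\aleph^{-1}(N))$ equals the number of pairs with $c_i>c_{i+1}$, so
$$\mathrm{des}(\aleph^{-1}(N))=\#\{0\le k\le d-2 : b_{k+1}>b_k\}.$$

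\textbf{Step 2: increment of the right-hand side.} Since $\lfloor N/m\rfloor-\lfloor (N-1)/m\rfloor=\chi(m\mid N)$, the right-hand side increases by
$$1-\#\{0\le i\le d-2 : i!(i+2)\mid N\}.$$
For $N\ge1$, let $j\ge1$ be the smallest index with $b_j\ge1$. I will determine, for each $i$, whether $i!(i+2)$ divides $N$:
\begin{itemize}
\item If $i\le j-2$, then every $k!/i!$ with $k\ge j$ is divisible by $(i+1)(i+2)$, so $i!(i+2)\mid N$.
\item If $i=j-1$, then $N/(j-1)!\equiv b_j\,j\equiv -b_j \pmod{j+1}$, and $1\le b_j\le j$, so there is no divisibility.
\item If $i=j$ (possible only when $j\le d-2$), then $N/j!\equiv b_j+(j+1)b_{j+1}\equiv b_j-b_{j+1}\pmod{j+2}$. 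Both digits lie in $[0,j+1]$, so divisibility holds if and only if $b_j=b_{j+1}$.
\item If $i>j$, then $i!\nmid N$.
\end{itemize}
Hence the right-hand side increases by $1-(j-1)-\chi(b_j=b_{j+1})$, where the last term is taken to be $0$ when $j=d-1$.

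\textbf{Step 3: increment of the descent count.} Passing from $N$ to $N-1$ in factorial base changes the digits to $b_k=k$ for $1\le k<j$ and $b_j-1$ in place of $b_j$, leaving the higher digits unchanged.
\begin{itemize}
\item For $N-1$, the pairs $k=0,\dots,j-2$ give $j-1$ descents. The pair $(j-1,j)$ gives none, since $b_j-1\le j-1$. The pair $(j,j+1)$ contributes $\chi(b_{j+1}\ge b_j)$.
\item For $N$, the pairs $k\le j-2$ give none, since all these digits are $0$. The pair $(j-1,j)$ gives one descent. The pair $(j,j+1)$ contributes $\chi(b_{j+1}>b_j)$.
\end{itemize}
All other pairs agree. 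So the descent count increases by $1-(j-1)-\chi(b_{j+1}=b_j)$, with the same convention when $j=d-1$. This matches Step 2 and completes the induction.

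I expect the main obstacle to be the divisibility case analysis in Step 2, in particular the boundary cases $i=j-1$ and $i=j$. These are exactly where the modular reductions $j\equiv-1\pmod{j+1}$ and $j+1\equiv-1\pmod{j+2}$ must be matched against the digit bounds $b_k\le k$.
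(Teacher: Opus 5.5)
Your argument is correct, and it takes a genuinely different route from the paper.

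The paper inducts on $d$. It strips off the leading inversion-table entry $c_1=m=\lfloor N/(d-1)!\rfloor$ and writes $\mathrm{des}(\aleph^{-1}(N))$ as $\chi(c_1>c_2)$ plus the descent number of the permutation in $\mathfrak{S}_{d-1}$ with inversion table $(c_2,\dots,c_d)$, whose code is $r=N-m(d-1)!$. After applying the formula for $d-1$ to $r$, it is left to prove $m-\lfloor N/((d-2)!\,d)\rfloor=\chi(c_1>c_2)$. That step uses $i!(i+2)\mid (d-1)!$ for $i\le d-3$, the telescoping identity $\sum_{i=0}^{d-3}\frac{1}{i!(i+2)}=1-\frac{1}{(d-1)!}$, and a fractional-part case analysis of the one new floor term.

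You instead fix $d$, induct on $N$, and work at the lowest nonzero factorial digit $b_j$. The first difference of each floor term is a divisibility indicator. Your case split ($i\le j-2$ always divides, $i=j-1$ never, $i=j$ iff $b_j=b_{j+1}$, $i>j$ never) matches exactly the local change in digit ascents caused by the borrow $N\mapsto N-1$. I checked every case, including the boundary cases $j=1$ and $j=d-1$. The only step you leave unjustified is $i!\nmid N$ for $i>j$. It holds because $N \bmod i! = \sum_{k=j}^{i-1} b_k\,k!$, which lies in $[1,\,i!-1]$.

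Your version is more local and self-contained. It needs neither the telescoping sum nor the closed-form evaluation of $\lfloor N/((d-2)!\,d)\rfloor$, and it shows which modulus $i!(i+2)$ is responsible for which change in the descent count. The paper's version instead mirrors the recursive structure of the inversion table. It shows directly how the formula for $\mathfrak{S}_{d-1}$ extends to $\mathfrak{S}_d$ through the single new floor term with $i=d-2$.
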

\begin{proof}
The proof is by induction on $d$.
When $d=2,3$, the equality is easily verified.
Assuming the formula holds for $d-1$, we prove it for $d$.
For convenience, we denote the left-hand side of Equation \eqref{Descent-Aleph-formul} by $T_d(N)$.

Let $m=\left\lfloor \frac{N}{(d-1)!}\right\rfloor$ and $r=N-m\cdot (d-1)!$. We know that $0\leq m< d$ and $0\leq r <(d-1)!$.
Assume $\aleph^{-1}(N)=\pi$ and $I(\pi)=(c_1,c_2,\ldots,c_d)$, given that $c_1=m$ and $c_2=\left\lfloor \frac{r}{(d-2)!}\right\rfloor$.
Since the number of descents of $\pi$ equals the number of pairs with $c_i>c_{i+1}$,
we obtain
$$T_d(N)=\chi(c_1>c_2)+T_{d-1}(r).$$

The inequality $c_1>c_2$ is equivalent to $r<m\cdot (d-2)!$, which means
$$N<m\cdot (d-1)!+m\cdot (d-2)!=md\cdot (d-2)!.$$
Therefore, we obtain
\begin{align*}
T_d(N)&=\chi(N<md\cdot (d-2)!)+T_{d-1}(r)
\\ &=\chi(N<md\cdot (d-2)!)+ r- \sum_{i=0}^{d-3} \left\lfloor \frac{r}{i!(i+2)}\right\rfloor.
\end{align*}
To prove
$$T_d(N)=m\cdot (d-1)!+r- \sum_{i=0}^{d-2}\left\lfloor \frac{N}{i!(i+2)}\right\rfloor,$$
it suffices to show that
$$m\cdot (d-1)!- \sum_{i=0}^{d-2}\left\lfloor \frac{m\cdot (d-1)!+r}{i!(i+2)}\right\rfloor+\sum_{i=0}^{d-3} \left\lfloor \frac{r}{i!(i+2)}\right\rfloor=\chi(N<md\cdot (d-2)!).$$
We denote the left-hand side of the above equation by $\mathrm{LHS}$.
Since for $0\leq i\leq d-3$, $\frac{m\cdot (d-1)!}{i!(i+2)}$ is an integer, we have
\begin{align*}
\mathrm{LHS}&=m\cdot (d-1)!- \left\lfloor \frac{m\cdot (d-1)! +r}{(d-2)!d}\right\rfloor - \sum_{i=0}^{d-3} \frac{m\cdot (d-1)!}{i!(i+2)}
\\ &= m\cdot (d-1)!- \left\lfloor \frac{m\cdot (d-1)! +r}{(d-2)!d}\right\rfloor -m\cdot (d-1)! \left( \sum_{i=0}^{d-3} \left(\frac{1}{(i+1)!}-\frac{1}{(i+2)!}\right)\right)
\\ &= m\cdot (d-1)!- \left\lfloor \frac{N}{(d-2)!d}\right\rfloor -m\cdot (d-1)!\left(1-\frac{1}{(d-1)!}\right)
\\ &= m - \left\lfloor \frac{N}{(d-2)!d}\right\rfloor.
\end{align*}

By $N=m\cdot (d-1)!+r$, we set $\frac{N}{(d-1)!}=m+\delta$. Thus $0\leq \delta=\frac{r}{(d-1)!}<1$.
We consider the following two cases:

1.) When $m=0$, we have $\mathrm{LHS}=0=\chi(N<md\cdot (d-2)!)$.

2.) When $m\geq 1$, we consider the value of
$$\left\lfloor \frac{N}{(d-2)!d}\right\rfloor=\left\lfloor \frac{N}{(d-1)!}\cdot \frac{d-1}{d}\right\rfloor
=\left\lfloor \frac{m(d-1)}{d} +\frac{r}{(d-1)!}\cdot\frac{d-1}{d} \right\rfloor
=\left\lfloor \frac{m(d-1)}{d} +\delta \cdot\frac{d-1}{d} \right\rfloor.$$
It is clear that the integer part of $\frac{m(d-1)}{d}$ is $m-1$, and its fractional part is $1-\frac{m}{d}$.
Therefore, we have
$$\left\lfloor \frac{N}{(d-2)!d}\right\rfloor=m-1+\left\lfloor 1-\frac{m}{d} +\delta \cdot\frac{d-1}{d} \right\rfloor.$$
Obviously, $\left\lfloor 1-\frac{m}{d} +\delta \cdot\frac{d-1}{d} \right\rfloor\in \{0,1\}$.
Observe that $1-\frac{m}{d} +\delta \cdot\frac{d-1}{d}\geq 1$ is equivalent to $\delta\geq \frac{m}{d-1}$. It follows that
\begin{align*}
\left\lfloor \frac{N}{(d-2)!d}\right\rfloor=
\begin{cases}
m-1 & \text{ if } \delta<\frac{m}{d-1}, \\
m & \text{ if } \delta\geq \frac{m}{d-1}.
\end{cases}
\end{align*}
Since $\delta<\frac{m}{d-1}$ is equivalent to $r<m\cdot (d-2)!$ (i.e., $N<md\cdot (d-2)!$), we obtain
\begin{align*}
\mathrm{LHS}&=
\begin{cases}
1 & \text{ if } N<md\cdot (d-2)!, \\
0 & \text{ if } N\geq md\cdot (d-2)!
\end{cases}
\\& =\chi(N<md\cdot (d-2)!).
\end{align*}
We have thus finished the proof.
\end{proof}

A direct consequence of the above result is an explicit formula for the Eulerian polynomials.
\begin{cor}\label{Cor-Adx=descent}
The Eulerian polynomial $A_d(x)$ is given by
$$A_d(x)=\sum_{\pi\in \mathfrak{S}_d} x^{1+\mathrm{des}(\pi)}= \sum_{j=0}^{d!-1} x^{j+1-\sum_{i=0}^{d-2}\left\lfloor \frac{j}{i!(i+2)}\right\rfloor}.$$
\end{cor}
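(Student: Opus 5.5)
This is a direct reindexing of the definition $A_d(x)=\sum_{\pi\in\mathfrak{S}_d}x^{1+\mathrm{des}(\pi)}$ along the bijection $\aleph$, followed by the descent formula \eqref{Descent-Aleph-formul}.

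\textbf{Case $d\ge 2$.} The first equality is the definition of $A_d(x)$. For the second, use that $\aleph:\mathfrak{S}_d\to[0,d!-1]_\Z$ is a bijection, and substitute $\pi=\aleph^{-1}(j)$:
\begin{align*}
\sum_{\pi\in\mathfrak{S}_d}x^{1+\mathrm{des}(\pi)}=\sum_{j=0}^{d!-1}x^{1+\mathrm{des}(\aleph^{-1}(j))}.
\end{align*}
Now apply \eqref{Descent-Aleph-formul} with $N=j$, which is legitimate because $0\le j<d!$:
\begin{align*}
1+\mathrm{des}(\aleph^{-1}(j)) = j+1-\sum_{i=0}^{d-2}\left\lfloor \frac{j}{i!(i+2)}\right\rfloor .
\end{align*}
Substituting this into each exponent gives the claimed formula.

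\textbf{Case $d=1$.} The preceding theorem assumes $d\ge 2$, so this case is checked separately. The inner sum is empty and $d!-1=0$, so the right-hand side is $x^{0+1}=x=A_1(x)$.

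\textbf{Main point to check.} There is no real obstacle. The only thing to confirm is that the reindexing is valid, namely that $\aleph$ is a bijection. This is exactly the earlier lemma, which rests on the Lehmer code bijection of Lemma \ref{Permu-to-inverTab} together with the uniqueness of factorial-base expansions.
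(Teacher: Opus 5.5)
Your proposal is correct and takes the same route as the paper, which obtains the corollary as a direct consequence by reindexing $\sum_{\pi\in\mathfrak{S}_d}x^{1+\mathrm{des}(\pi)}$ along the bijection $\aleph$ and substituting the descent formula of the preceding theorem. Your separate check of $d=1$ covers a case the paper leaves implicit: the descent theorem is stated only for $d\ge 2$, and there the right-hand side reduces to $x=A_1(x)$.
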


\subsection{Eulerian simplex family}

Based on our previously implemented algorithm for computing polynomial $L_1(x)$, we can construct some Eulerian simplex families for small $d$ through a bounded search. This also inspires us to construct a family of Eulerian simplex family in any dimension.

\begin{thm}\label{Prop-Sk}
For every positive integer $m$ and $d\geq 1$, the following family $\mathcal{S}_d(m)$ of polytopes fall into the Eulerian simplex family $\mathcal{R}_d(m)$ of dimension $d$:
\begin{align*}
\mathcal{S}_d(m)=
\begin{cases}
\ell_{m}=[0,m]=\{\alpha\in \mathbb{R} \mid 0\leq \alpha\leq m\} & \text{ if }\quad d=1, \\
\mathrm{conv}\{\mathbf{0}^d, \mathbf{e}_1^d, \mathbf{e}_2^d, \ldots, \mathbf{e}_{d-1}^d, (q_1(d), q_2(d), \ldots, q_{d-1}(d),d!\cdot m)\} & \text{ if }\quad d\geq 2,
\end{cases}
\end{align*}
where $\mathbf{e}_i^d$ denote the $i$-th unit coordinate vector of $\mathbb{R}^d$; $\mathbf{0}^d$ denote the origin of $\mathbb{R}^d$; and
$$q_i(d)=\frac{-d!}{i!+(i-1)!}\quad \text{for} \quad 1\leq i\leq d-1.$$
\end{thm}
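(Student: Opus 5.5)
The plan is to recognize $\mathcal{S}_d(m)$ as a member of the parametric family $\Delta(0,q^{(m)})$ from Section~\ref{Section-Five}, and then identify its characteristic polynomial $xL_1(x)$ with the Eulerian polynomial via Corollary~\ref{Cor-Adx=descent}.

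\textbf{Case $d=1$.} Here $\mathcal{S}_1(m)=[0,m]$ has Ehrhart polynomial $mt+1$. Its Ehrhart series is $\sum_t (mt+1)x^t=\frac{1+(m-1)x}{(1-x)^2}$. Hence $h^*(x)=mx+(1-x)$. Since $A_1(x)=x$ and $T(x)=1-x$ satisfies $T(1)=0$ and $\deg T\le 1$, this case is done.

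\textbf{Setting up $d\ge 2$.} Put $n=d!$ and $q_i=q_i(d)=-d!/\bigl((i-1)!(i+1)\bigr)$ for $1\le i\le d-1$, so that $\mathcal{S}_d(m)=\Delta(0,q^{(m)})$ with $q=(q_1,\dots,q_{d-1},n)$. Two hypotheses of Theorem~\ref{thm-dec-UnEX} must be checked.

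\emph{Divisibility $q_i\mid n$ for $i\le d-1$.} We have $(i-1)!(i+1)\mid (i+1)!\mid d!$ because $i+1\le d$.

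\emph{Computing $q_d=1-\sum_{i=1}^{d-1}q_i$.} Use the telescoping identity $\frac{1}{(i-1)!(i+1)}=\frac{1}{i!}-\frac{1}{(i+1)!}$, the same one used in the proof of \eqref{Descent-Aleph-formul}. This gives
$$q_d=1+d!\sum_{i=1}^{d-1}\Bigl(\tfrac{1}{i!}-\tfrac{1}{(i+1)!}\Bigr)=1+d!\Bigl(1-\tfrac{1}{d!}\Bigr)=d!=n,$$
which also divides $n$.

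\textbf{Applying Theorem~\ref{thm-dec-UnEX}.} It yields $h^*_{\mathcal{S}_d(m)}(x)=m\,xL_1(x)+L_2(x)$. By Proposition~\ref{Prop-L1L2-pro}, $L_2(1)=0$, and $L_2$ is independent of $m$. Its degree is at most $d$, since $L_2$ is a difference of an $h^*$-polynomial and $L(x)=xL_1(x)$, both of degree $\le d$ by the preceding proposition. So it suffices to show $xL_1(x)=A_d(x)$, after which we take $T=L_2$.

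\textbf{Computing the exponent $A(j)$ of $L(x)=xL_1(x)$.} For $0\le j\le n-1$:
\begin{itemize}
\item For $i\le d-1$ we have $q_i<0$, so $q_i^+=0$ and $\lceil q_ij/n\rceil=-\lfloor j/((i-1)!(i+1))\rfloor$.
\item For $i=d$, $q_d=n$ gives $\lceil (nj+n)/n\rceil=j+1$.
\end{itemize}
Therefore
$$A(j)=j+1-\sum_{i=1}^{d-1}\left\lfloor\frac{j}{(i-1)!(i+1)}\right\rfloor=j+1-\sum_{i=0}^{d-2}\left\lfloor\frac{j}{i!(i+2)}\right\rfloor,$$
after shifting the index $i\mapsto i+1$. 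Summing $x^{A(j)}$ over $0\le j\le d!-1$ gives exactly the right-hand side of Corollary~\ref{Cor-Adx=descent}, hence $L(x)=A_d(x)$.

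\textbf{Main obstacle.} The substantive input is the descent formula \eqref{Descent-Aleph-formul}, which is already proved. What remains is bookkeeping that I would double-check carefully:
\begin{itemize}
\item the sign conventions $q_i^+=0$ for negative $q_i$ and $q_d^+=n$;
\item the telescoping evaluation giving $q_d=d!$;
\item the degree bound on $L_2$;
\item the small case $d=2$, where $q_1=-1$ and $\mathcal{S}_2(m)$ has vertices $0$, $e_1$, $(-1,2m)$, as a sanity check.
\end{itemize}
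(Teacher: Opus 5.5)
Your proposal is correct and follows the paper's proof. You compute $q_d(d)=d!$ by telescoping, invoke Theorem~\ref{thm-dec-UnEX} to reduce to $xL_1(x)=A_d(x)$, evaluate each exponent as $j+1-\sum_{i=0}^{d-2}\lfloor j/(i!(i+2))\rfloor$, and conclude by Corollary~\ref{Cor-Adx=descent}. You also make explicit what the paper leaves implicit, and all of it checks out: the $d=1$ case, the divisibility $q_i(d)\mid d!$ needed for Theorem~\ref{thm-dec-UnEX}, and that $T=L_2$ satisfies $T(1)=0$ by Proposition~\ref{Prop-L1L2-pro} and $\deg T\le d$.
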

\begin{proof}
Let $q_d(d)=1-\sum_{j=1}^{d-1}q_j(d)$. Then we have
\begin{align*}
q_d(d)&=1-d!\cdot \sum_{j=1}^{d-1}\frac{-1}{j!+(j-1)!}=1+d!\cdot \sum_{j=1}^{d-1} \frac{1}{(j-1)!(j+1)}
\\ &= 1+d! \cdot \sum_{j=0}^{d-2} \frac{j+1}{(j+2)!}
= 1+d! \cdot \sum_{i=2}^d \left( \frac{1}{(i-1)!}-\frac{1}{i!} \right)
\\ &= d!.
\end{align*}
By Theorem \ref{thm-dec-UnEX}, it suffices to prove that $x\cdot L_1(x)=A_d(x)$.
We have
\begin{align*}
x\cdot L_1(x)&=\sum_{j=0}^{d!-1} x^{\left\lceil \frac{q_1(d)\cdot j}{d!}\right\rceil+\cdots
+\left\lceil \frac{q_{d-1}(d)\cdot j}{d!}\right\rceil+ \left\lceil \frac{q_d(d)\cdot j+q_d(d)}{d!}\right\rceil}
 = \sum_{j=0}^{d!-1} x^{j+1+\sum_{i=1}^{d-1} \left\lceil \frac{-j}{i!+(i-1)!}\right\rceil}
\\ & = \sum_{j=0}^{d!-1} x^{j+1-\sum_{i=0}^{d-2} \left\lfloor \frac{j}{i!(i+2)}\right\rfloor}.
\end{align*}
By Corollary \ref{Cor-Adx=descent}, this completes the proof.
\end{proof}

Note that $\mathcal{S}_d(m)$ is a special case of $\Delta(0,q^{(m)})$-simplices and is itself a member of the Eulerian simplex families.
There exist other Eulerian simplex families.
For example,
$$\widehat{\mathcal{S}}_5(m) = \mathrm{conv}\{\mathbf{0}^5, \mathbf{e}_1^5, \mathbf{e}_2^5, \mathbf{e}_3^5, \mathbf{e}_4^5, (-20,-10,3,4,5!\cdot m)\}.$$
Its Ehrhart series is given by
$$\mathrm{Ehr}(\widehat{\mathcal{S}}_5(m),x)=\frac{m(x^5+26x^4+66x^3+26x^2+x)+(-x^5-10x^4-3x^3+10x^2+3x+1)}{(1-x)^6}.$$

\begin{prop}\label{Cor-Euler-h-polynomial}
Given a positive integer $d$, if the polytope $\widetilde{\mathcal{S}}_d(m)$ fall into the Eulerian simplex family $\mathcal{R}_d(m)$ of dimension $d$, then we have
$$i(\widetilde{\mathcal{S}}_d(m),t)=m t^d  + c_{d-1} t^{d-1}+\cdots+c_{1} t+1,$$
where $c_{d-1},\ldots,c_1 $ are constants independent of $m$.
\end{prop}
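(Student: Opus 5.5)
The plan is to pass from the $h^*$-polynomial to the Ehrhart polynomial via the generating-function identity. Since $\widetilde{\mathcal{S}}_d(m)$ lies in the Eulerian simplex family, we have $h^*(x)=mA_d(x)+T(x)$, where $T$ is independent of $m$, $\deg T\le d$, and $T(1)=0$. By the definition of the Ehrhart series,
$$\mathrm{Ehr}(\widetilde{\mathcal{S}}_d(m),x)=\frac{mA_d(x)}{(1-x)^{d+1}}+\frac{T(x)}{(1-x)^{d+1}}.$$

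The first summand is handled by Proposition~\ref{Generating Function for Eulerian Polynomials}. It equals $m\sum_{t\ge0}t^dx^t$, so it contributes exactly $mt^d$ to $i(\widetilde{\mathcal{S}}_d(m),t)$ for every $t\ge 0$. (For $d\ge1$ the $t=0$ term vanishes, which is consistent because $A_d(0)=0$.)

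For the second summand, write $T(x)=\sum_{i=0}^d T_ix^i$ with $T_i$ independent of $m$. By the same expansion used for Equation~\eqref{formual-ehrbyh*}, its coefficient of $x^t$ is
$$\sum_{i=0}^d T_i\binom{t+d-i}{d}.$$
This is a polynomial $Q(t)$ in $t$ of degree at most $d$, and its coefficients do not depend on $m$. Its $t^d$-coefficient is $\frac{1}{d!}\sum_i T_i=T(1)/d!=0$, so in fact $\deg Q\le d-1$. Equivalently, since $T(1)=0$ we can factor $T(x)=(1-x)U(x)$, so the second summand is $U(x)/(1-x)^d$, which visibly produces a polynomial of degree at most $d-1$.

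Adding the two contributions gives
$$i(\widetilde{\mathcal{S}}_d(m),t)=mt^d+Q(t),$$
with the coefficients $c_{d-1},\dots,c_1$ of $Q$ independent of $m$.

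It remains to identify the constant term. We have $Q(0)=i(\widetilde{\mathcal{S}}_d(m),0)$, which equals the $x^0$ coefficient of the Ehrhart series, namely $h^*_0=1$. Alternatively, the constant term of any Ehrhart polynomial is $1$. This yields the stated form.

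The only point needing care is the vanishing of the $t^d$-coefficient of $Q$, which rests on $T(1)=0$. The identity that the coefficient equals $T(1)/d!$ follows because $\binom{t+d-i}{d}$ has leading coefficient $1/d!$ for every $i$, so this is not a serious obstacle.
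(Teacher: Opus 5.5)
Your proposal is correct and follows the paper's proof: write $h^*=mA_d(x)+T(x)$, use the Eulerian generating-function identity to extract the $mt^d$ term, and use $T(1)=0$ to kill the $t^d$-coefficient of $\sum_i T_i\binom{t+d-i}{d}$, whose coefficients are independent of $m$. You also check explicitly that the constant term is $1$ and give the alternative factorization $T(x)=(1-x)U(x)$, both of which the paper leaves implicit.
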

\begin{proof}
We assume that the $h^*$-polynomial of $\widetilde{\mathcal{S}}_d(m)$ can be expressed as
$$h^*_{\widetilde{\mathcal{S}}_d(m)}(x) = m A_d(x) + T(x)\quad \text{with} \quad T(x)=\sum_{i=0}^{d} r_{i} x^{i}.$$
By Equation~\eqref{formual-ehrbyh*}, Proposition \ref{Generating Function for Eulerian Polynomials}, we have
$$i(\widetilde{\mathcal{S}}_d(m),t) = mt^d+\sum_{i=0}^{d} r_i \binom{t + d - i}{d}.$$
Since $T(1)=0$ (from Proposition \ref{Prop-L1L2-pro}), the coefficient of $t^d$ in the polynomial $\sum_{i=0}^{d} r_i \binom{t + d - i}{d}$ is $0$. As $T(x)$ is independent of $m$, it follows that each $r_i$ is also independent of $m$.
\end{proof}

\begin{cor}
Follow the notation above, and the following equation holds:
$$\#(\widetilde{\mathcal{S}}_d(m) \cap \mathbb{Z}^d) =  [t]T(x)+m +d+1 \quad \text{and} \quad
\#(\mathrm{int}(\widetilde{\mathcal{S}}_d(m)) \cap \mathbb{Z}^d) =  [t^d]T(x)+m.$$
\end{cor}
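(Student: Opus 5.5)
The plan is to read both counts directly off the $h^*$-polynomial using Equation~\eqref{formula-h^*-prop}. The notation $[t]T(x)$ and $[t^d]T(x)$ is understood as the coefficient of $x^1$ and of $x^d$ in $T(x)$, respectively. By hypothesis, $\widetilde{\mathcal{S}}_d(m)$ lies in the Eulerian simplex family, so we may write
$$h^*_{\widetilde{\mathcal{S}}_d(m)}(x)=mA_d(x)+T(x), \qquad T(x)=\sum_{i=0}^d r_ix^i .$$
It then suffices to know the coefficients of $x^1$ and $x^d$ in $A_d(x)$.

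\textbf{Step 1 (Eulerian coefficients).} Since $A_d(x)=\sum_{\pi\in\mathfrak{S}_d}x^{1+\mathrm{des}(\pi)}$, the coefficient of $x^1$ counts permutations with no descents. Only the identity qualifies, so this coefficient is $A(d,1)=1$. The coefficient of $x^d$ counts permutations with $d-1$ descents, and only the decreasing permutation qualifies, so $A(d,d)=1$.

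\textbf{Step 2 (lattice points).} By Equation~\eqref{formula-h^*-prop}, $h_1^*=\#(\widetilde{\mathcal{S}}_d(m)\cap\mathbb{Z}^d)-(d+1)$. From Step 1 and the decomposition, $h_1^*=m\cdot 1+r_1$. Rearranging gives
$$\#(\widetilde{\mathcal{S}}_d(m)\cap\mathbb{Z}^d)=r_1+m+d+1 .$$

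\textbf{Step 3 (interior points).} Again by Equation~\eqref{formula-h^*-prop}, $h_d^*=\#(\mathrm{int}(\widetilde{\mathcal{S}}_d(m))\cap\mathbb{Z}^d)$. From Step 1, $h_d^*=m\cdot 1+r_d$, which gives the second identity.

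\textbf{Edge case.} The only care needed is $d=1$, where $x^1=x^d$. Here $A_1(x)=x$, so both formulas read off the same coefficient $h_1^*$. This is consistent: for a segment $[0,m]$ we have $m+1$ lattice points and $m-1$ interior points, so $T(x)=1-x$ fits both formulas.

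No step is a real obstacle; the only point to verify is the identification $A(d,1)=A(d,d)=1$.
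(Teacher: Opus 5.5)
Your proposal is correct and follows the paper's argument: the paper just cites the identities $h_1^*=\#(\mathcal{P}\cap\mathbb{Z}^d)-(d+1)$ and $h_d^*=\#(\mathrm{int}(\mathcal{P})\cap\mathbb{Z}^d)$. You have made explicit the fact it leaves implicit, namely $A(d,1)=A(d,d)=1$, so that $h_1^*=m+[x]T(x)$ and $h_d^*=m+[x^d]T(x)$, and your $d=1$ check with $T(x)=1-x$ is consistent.
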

\begin{proof}
The conclusion follows by Equation~\eqref{formula-h^*-prop}.
\end{proof}

We now give the Ehrhart series and Ehrhart polynomials for the family $\mathcal{S}_d(m)$ of polytopes in Theorem \ref{Prop-Sk}.
\begin{cor}\label{Ehrhart-Series-of-Sdm}
Let $m\geq 1$.
For any positive integer $d$, we have
$$\mathrm{Ehr}(\mathcal{S}_d(m),x)=\frac{A_d(x)(mx-x+1)}{x(1-x)^{d+1}}.$$
\end{cor}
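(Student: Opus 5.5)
We must show $h^*_{\mathcal{S}_d(m)}(x)=A_d(x)(mx-x+1)/x$, i.e. $h^*=mA_d(x)+(1-x)A_d(x)/x$. Dividing by $(1-x)^{d+1}$ then gives the stated Ehrhart series.

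\textbf{Case $d\ge 2$.} By Theorem~\ref{thm-dec-UnEX}, $h^*_{\mathcal{S}_d(m)}(x)=m\,xL_1(x)+L_2(x)$. The proof of Theorem~\ref{Prop-Sk} already gives $xL_1(x)=A_d(x)$, so it remains to show $L_2(x)=(1-x)A_d(x)/x$. By definition,
$$L_2(x)=h^*_{\Delta(0,q)}(x)-xL_1(x)=h^*_{\Delta(0,q)}(x)-A_d(x),$$
where $q=(q_1(d),\dots,q_{d-1}(d),d!)$ is the $m=1$ member. Hence it suffices to prove $h^*_{\Delta(0,q)}(x)=A_d(x)/x$.

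I would compute $h^*_{\Delta(0,q)}(x)$ with Theorem~\ref{thm-Delta}, taking $n=d!$ and $q_d=d!$ (the value computed in the proof of Theorem~\ref{Prop-Sk}). The exponent for index $j$, with $0\le j\le d!-1$, is
$$\sum_{i=1}^{d-1}\left\lceil\frac{-j}{i!+(i-1)!}\right\rceil+\left\lceil\frac{d!\,j}{d!}\right\rceil=j-\sum_{i=0}^{d-2}\left\lfloor\frac{j}{i!(i+2)}\right\rfloor.$$
This is exactly the computation in the proof of Theorem~\ref{Prop-Sk}, except that the ceiling of the last term lacks the $+q_d^+$ shift, so the extra $+1$ disappears. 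By Theorem~\eqref{Descent-Aleph-formul} this exponent equals $\mathrm{des}(\aleph^{-1}(j))$. Since $\aleph$ is a bijection, summing over $j$ gives
$$h^*_{\Delta(0,q)}(x)=\sum_{\pi\in\mathfrak S_d}x^{\mathrm{des}(\pi)}=A_d(x)/x.$$

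Therefore
$$h^*_{\mathcal{S}_d(m)}(x)=mA_d(x)+\frac{A_d(x)}{x}-A_d(x)=\frac{A_d(x)(mx-x+1)}{x}.$$
This is also consistent with Proposition~\ref{Prop-L1L2-pro}: $L_2(1)=0$ and $L_2(0)=A(d,1)=1$.

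\textbf{Case $d=1$.} Here $\mathcal{S}_1(m)=[0,m]$ and $i(\mathcal{S}_1(m),t)=mt+1$. Directly,
$$\sum_{t\ge0}(mt+1)x^t=\frac{mx}{(1-x)^2}+\frac{1}{1-x}=\frac{mx+1-x}{(1-x)^2}.$$
Since $A_1(x)=x$, the right-hand side of the claim is $x(mx-x+1)/(x(1-x)^2)$, which is the same.

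The only real step is recognizing that the $m=1$ simplex has $h^*$ equal to the descent generating polynomial without the shift. This rests on noticing that the exponent formula differs from the one for $xL_1(x)$ only in the last ceiling term, which holds because $q_d=n=d!$.
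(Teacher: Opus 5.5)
Your proposal is correct and follows essentially the same route as the paper. Both use $xL_1(x)=A_d(x)$ from Theorem~\ref{Prop-Sk}, then identify $h^*_{\Delta(0,q)}(x)=\sum_{j}x^{j-\sum_i\lfloor j/(i!(i+2))\rfloor}=A_d(x)/x$ via the descent formula, which gives $L_2(x)=A_d(x)(1-x)/x$. Your explicit check of the case $d=1$ is a harmless addition that the paper leaves implicit.
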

\begin{proof}
By Theorem \ref{Prop-Sk}, we know $x\cdot L_1(x)=A_d(x)$.
By Theorem \ref{thm-dec-UnEX}, we obtain
\begin{align*}
L_2(x)&=\sum_{j=0}^{d!-1}x^{j-\sum_{i=0}^{d-2} \left\lfloor \frac{j}{i!(i+2)}\right\rfloor}- \sum_{j=0}^{d!-1}x^{j+1-\sum_{i=0}^{d-2} \left\lfloor \frac{j}{i!(i+2)}\right\rfloor}
\\ &= A_d(x)\cdot \frac{1-x}{x}.
\end{align*}
Hence, the conclusion holds.
\end{proof}

The following corollary implies that Theorem \ref{Theorem-Introd-Eulor-Ehrhar} holds.

\begin{cor}\label{Corollary-Sdm-Ehrhart-polynomial}
Let $m\geq 1$.
For every positive integer $d$, the Ehrhart polynomials of $\mathcal{S}_d(m)$ are given by
$$i(\mathcal{S}_d(m),t)=mt^d+\sum_{i=0}^{d-1}\binom{d}{i} t^i.$$
In particular, $\mathcal{S}_d(m)$ is Ehrhart positive (i.e., all coefficients are nonnegative).
\end{cor}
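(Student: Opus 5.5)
The plan is to read the Ehrhart polynomial off the Ehrhart series of Corollary \ref{Ehrhart-Series-of-Sdm}, splitting it into an $m$-part and an $m$-independent part. For $d\geq 2$ we write
$$\mathrm{Ehr}(\mathcal{S}_d(m),x)=\frac{A_d(x)(mx-x+1)}{x(1-x)^{d+1}} = m\,\frac{A_d(x)}{(1-x)^{d+1}} + \frac{A_d(x)}{x}\cdot\frac{1-x}{(1-x)^{d+1}}.$$
By Proposition \ref{Generating Function for Eulerian Polynomials}, the first summand is $m\sum_{t\ge0} t^d x^t$, which contributes exactly $mt^d$ to $i(\mathcal{S}_d(m),t)$. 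The second summand is $\frac{1}{x}\cdot\frac{A_d(x)}{(1-x)^{d}}$.

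To handle $\frac{A_d(x)}{(1-x)^{d}}$, I will use the identity
$$\frac{A_d(x)}{(1-x)^{d}} = (1-x)\sum_{t\ge0}t^dx^t=\sum_{t\geq 0}\bigl(t^d-(t-1)^d\bigr)x^t,$$
with the convention that the $t=0$ coefficient is $0$. Dividing by $x$ shifts indices, so the coefficient of $x^t$ becomes $(t+1)^d-t^d$. This gives
$$i(\mathcal{S}_d(m),t)=mt^d+(t+1)^d-t^d=mt^d+\sum_{i=0}^{d-1}\binom{d}{i}t^i.$$
Two points need checking. First, the constant term of the series is $1^d-0^d=1$, which matches $\mathrm{Ehr}=1+\sum_{t\ge1} i(\mathcal{P},t)x^t$. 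Second, the division by $x$ is legitimate because $A_d(x)$ has zero constant term for $d\geq1$.

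For $d=1$, the polytope is the segment $[0,m]$ and $i(\ell_m,t)=mt+1$ directly, which agrees with the formula. Alternatively, the series argument also works here since $A_1(x)=x$.

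Ehrhart positivity is then immediate, since $m>0$ and every binomial coefficient $\binom{d}{i}$ is positive. The only delicate step is the index bookkeeping in the shift by $1/x$ together with the $t=0$ term; there is no other real obstacle. As a cross-check, the formula is consistent with Proposition \ref{Cor-Euler-h-polynomial}: the leading coefficient is $m$, and the remaining coefficients are independent of $m$.
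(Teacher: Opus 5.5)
Your proposal is correct and follows essentially the paper's proof. Both read the polynomial off the series in Corollary~\ref{Ehrhart-Series-of-Sdm} via Proposition~\ref{Generating Function for Eulerian Polynomials}; the paper splits it as $\frac{(m-1)A_d(x)}{(1-x)^{d+1}}+\frac{A_d(x)}{x(1-x)^{d+1}}$ to get $(m-1)t^d+(t+1)^d$, while you split off $m\frac{A_d(x)}{(1-x)^{d+1}}$ to get $mt^d+(t+1)^d-t^d$, which is the same polynomial (your direct check of the case $d=1$ is a harmless extra).
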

\begin{proof}
By Corollary \ref{Ehrhart-Series-of-Sdm} and Proposition \ref{Generating Function for Eulerian Polynomials}, we obtain
$$\mathrm{Ehr}(\mathcal{S}_d(m),x)=\frac{(m-1) A_d(x)}{(1-x)^{d+1}}+\frac{A_d(x)}{x(1-x)^{d+1}}
=\sum_{t\geq 0}(m-1)t^dx^t+ \sum_{t\geq 0}(t+1)^d x^t.$$
This completes the proof.
\end{proof}

\begin{exa}
We present the Ehrhart series and Ehrhart polynomials for the first few families of simplices in Theorem \ref{Prop-Sk}:
\begin{small}
\begin{align*}
\mathcal{S}_1(m) & = \ell_{m},\\
\mathcal{S}_2(m) & = \mathrm{conv}\{\mathbf{0}^2, \mathbf{e}_1^2, (-1,2!\cdot m)\},  \\
\mathcal{S}_3(m) & = \mathrm{conv}\{\mathbf{0}^3, \mathbf{e}_1^3, \mathbf{e}_2^3, (-3,-2,3!\cdot m)\},
\\ \mathcal{S}_4(m) & = \mathrm{conv}\{\mathbf{0}^4, \mathbf{e}_1^4, \mathbf{e}_2^4,\mathbf{e}_3^4, (-12,-8,-3,4!\cdot m)\},
\\ \mathcal{S}_5(m) & = \mathrm{conv}\{\mathbf{0}^5, \mathbf{e}_1^5, \mathbf{e}_2^5, \mathbf{e}_3^5, \mathbf{e}_4^5, (-60,-40,-15,-4,5!\cdot m)\}.
\end{align*}
\end{small}
Their Ehrhart series are as follows:
\begin{align*}
\mathrm{Ehr}(\mathcal{S}_1(m),x)&=\frac{mx+(-x+1)}{\left(1-x \right)^{2}}, \\
 \mathrm{Ehr}(\mathcal{S}_2(m),x)&=\frac{m(x^{2}+ x) +(-x^{2}+1)}{\left(1-x \right)^{3}}, \\
\mathrm{Ehr}(\mathcal{S}_3(m),x)&=\frac{m(x^3+4x^2+x)+(-x^3-3x^2+3x+1)}{(1-x)^4}, \\
\mathrm{Ehr}(\mathcal{S}_4(m),x)&=\frac{m(x^4+11x^3+11x^2+x)+(-x^4-10x^3+10x+1)}{(1-x)^5}, \\
\mathrm{Ehr}(\mathcal{S}_5(m),x)&=\frac{m(x^5+26x^4+66x^3+26x^2+x)+(-x^5-25x^4-40x^3+40x^2+25x+1)}{(1-x)^6}.
\end{align*}
\end{exa}

\subsection{Reverse isoperimetric phenomenon}\label{sub-Section-RIP}

Given a geometric object $E\subset \mathbb{R}^d$, its \emph{volume} is defined by the integral $|E|:=\int_E d\mathbf{x}$.

The isoperimetric problem stands as one of the most classical and renowned problems in the history of geometry.
In two dimensions, the isoperimetric inequality asserts that a disk has the smallest boundary length among all domains in the plane with a given area.

The isoperimetric problem in $\mathbb{R}^d$ is to minimize the surface area among all domains having given volume, or equivalently, maximize the volume among all domains whose boundary surfaces have fixed $(d-1)$-dimensional area.
The solution in both cases is that the unique extremal is the domain bounded by a sphere.

\begin{prop}{\em \cite[Isoperimetric inrequality]{Isoperim-inequality}}\label{Sommth-Osoperimet-inequalit}
Let $E$ be a compact domain in $\mathbb{R}^d$ with smooth boundary. Then
$$|\partial E|^d\geq d^d|B_1^d|\cdot |E|^{d-1},$$
where $|E|$ denotes the volume of $E$ and $|\partial E|$ denotes the $(d-1)$-dimensional measure of the boundary $\partial E$. Moreover, $B_1^d=\{ x\in \mathbb{R}^d: |x|<1\}$ denotes the open unit ball in $\mathbb{R}^d$ and $|B_1^d|$ denotes its volume.
\end{prop}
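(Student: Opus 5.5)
The statement is the classical isoperimetric inequality for compact domains with smooth boundary, quoted from the literature. I would prove it by the Brunn--Minkowski route, which avoids any regularity theory beyond smoothness of $\partial E$.

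\textbf{Step 1: Brunn--Minkowski.} First I establish that for nonempty compact sets $A,B\subset\mathbb{R}^d$,
$$|A+B|^{1/d}\ge |A|^{1/d}+|B|^{1/d}.$$
I would prove this first for finite unions of axis-parallel boxes, by induction on the total number of boxes. Translate so that a coordinate hyperplane separates two boxes of $A$. Then translate $B$ so that the same hyperplane splits $B$ in the same volume ratio. Apply the induction hypothesis to the two halves and add the resulting inequalities. The general compact case follows by approximating from outside with box unions and using continuity of Lebesgue measure along decreasing sequences.

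\textbf{Step 2: Minkowski content equals surface area.} Next I show that
$$|\partial E|=\lim_{\varepsilon\to0^+}\frac{|E+\varepsilon \overline{B_1^d}|-|E|}{\varepsilon}.$$
Since $\partial E$ is compact and smooth, it has a tubular neighbourhood. For small $\varepsilon$, the map $(x,s)\mapsto x+s\nu(x)$ on $\partial E\times[0,\varepsilon)$, with $\nu$ the outer unit normal, is a diffeomorphism onto $(E+\varepsilon\overline{B_1^d})\setminus E$. Its Jacobian is $\prod_i(1+s\kappa_i(x))=1+O(s)$, where the $\kappa_i$ are the principal curvatures. Integrating gives $|E+\varepsilon \overline{B_1^d}|-|E|=\varepsilon|\partial E|+O(\varepsilon^2)$.

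\textbf{Step 3: combine.} Apply Step 1 with $A=E$ and $B=\varepsilon\overline{B_1^d}$:
$$|E+\varepsilon \overline{B_1^d}|\ge\big(|E|^{1/d}+\varepsilon|B_1^d|^{1/d}\big)^d=|E|+d\varepsilon|E|^{(d-1)/d}|B_1^d|^{1/d}+O(\varepsilon^2).$$
Subtract $|E|$, divide by $\varepsilon$, and let $\varepsilon\to0^+$ using Step 2. This gives $|\partial E|\ge d|B_1^d|^{1/d}|E|^{(d-1)/d}$. Raising both sides to the $d$-th power yields exactly the stated inequality.

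\textbf{Expected obstacle.} The main difficulty is the box-induction step in Brunn--Minkowski. One must choose the splitting hyperplane and the translate of $B$ so that the two halves of $A+B$ have disjoint interiors. One must also check that the total number of boxes strictly decreases. The base case of two single boxes needs the AM--GM inequality applied to the side-length ratios. The tubular-neighbourhood computation in Step 2 is routine once smoothness and compactness are used.
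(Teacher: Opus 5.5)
Your proof is correct, but there is no proof in the paper to compare it with. The paper only quotes the classical isoperimetric inequality from the literature, as background for the reverse isoperimetric discussion. Your route is the standard self-contained argument: Hadwiger--Ohmann box induction for Brunn--Minkowski, outer approximation of compact sets, and identification of $|\partial E|$ with the outer Minkowski content via a tubular neighbourhood. The constant is right: $|\partial E|\ge d|B_1^d|^{1/d}|E|^{(d-1)/d}$ is the $d$-th root of the stated bound, with equality for balls.

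Two details deserve a line each.
In the approximation step you also need $\bigcap_k (A_k+B_k)=A+B$ for decreasing compact approximants. This follows from compactness by extracting convergent subsequences of $a_k\in A_k$.
In Step 2 the normal map is better taken on $\partial E\times(0,\varepsilon]$; the discrepancy with $[0,\varepsilon)$ is a null set. Surjectivity comes from the fact that the nearest point $x\in\partial E$ to an exterior point $y$ satisfies $y-x=\operatorname{dist}(y,E)\,\nu(x)$.

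What your approach buys over the bare citation is robustness. Step 1 uses no regularity. Step 2 only needs $|\partial E|=\lim_{\varepsilon\to 0^+}(|E+\varepsilon\overline{B_1^d}|-|E|)/\varepsilon$, which for convex bodies is the Steiner formula. So the same argument proves the Euclidean inequality for every lattice polytope.

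Consequently, the ``failure'' described later in the paper must be read with the lattice-normalized boundary volume (twice the coefficient of $t^{d-1}$), not Euclidean surface area. For instance, two facets of $\mathcal{T}_m$ have Euclidean area $\tfrac12\sqrt{m^2+1}$. By your inequality, the Euclidean surface area of $\mathcal{S}_d(m)$ for $d\ge 2$ must grow at least like $m^{(d-1)/d}$.
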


The isoperimetric inequality is sharp on balls. Proposition \ref{Sommth-Osoperimet-inequalit} can be understood intuitively as the volume of $E$ being bounded by the volume of its boundary.
We refer the reader to the many review books and papers \cite{Chavel,Fusco,FuscoMaggi,Osserman}
Note that the isoperimetric problem above is treated in the ``continuous" context.
The discrete isoperimetric inequality has also been extensively studied; see, e.g, \cite{Hamamuki,ChungF,Bezrukov,BlockHD}.

For the lattice polytopes in our study, the isoperimetric inequality mentioned above fails to hold for their volume and boundary volume. The Reeve tetrahedron (Example \ref{ReeveTetrahedron}) provides a counterexample in dimension three, with volume $\frac{m}{6}$ and boundary volume $2$. As $m$ tends to infinity, the volume of the Reeve tetrahedron is much larger than its boundary volume.

In fact, for each dimension, by Corollary~\ref{Corollary-Sdm-Ehrhart-polynomial}, the isoperimetric inequality fails to hold.
\begin{cor}\label{Corollary-m-infinity}
Let $m\geq 1$.
For every positive integer $d$, there exists a family of $d$-dimensional integral polytopes $\mathcal{P}_d(m)$ such that, as $m$ increases, their volume tends to infinity while their boundary volume remains constant.
\end{cor}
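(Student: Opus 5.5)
The plan is to take $\mathcal{P}_d(m)=\mathcal{S}_d(m)$ from Theorem \ref{Prop-Sk} and read both quantities off the Ehrhart polynomial in Corollary \ref{Corollary-Sdm-Ehrhart-polynomial}:
$$i(\mathcal{S}_d(m),t)=mt^d+\sum_{i=0}^{d-1}\binom{d}{i}t^i.$$
By the facts recalled in the introduction (\cite[Corollary 3.20; Theorem 5.6]{BeckRobins}), the leading coefficient of $i(\mathcal{P},t)$ is the volume of $\mathcal{P}$. The coefficient of $t^{d-1}$ is one half of the boundary volume, where each facet is measured relative to the lattice in its own affine hull.

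First, the leading coefficient gives $\mathrm{vol}(\mathcal{S}_d(m))=m$, which tends to infinity as $m$ increases. As a sanity check, the determinant of the vertex matrix of $\mathcal{S}_d(m)$ is $d!\,m$, so the Euclidean volume is $d!\,m/d!=m$, which agrees.

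Second, the coefficient of $t^{d-1}$ is $\binom{d}{d-1}=d$, independent of $m$. Hence the normalized boundary volume equals $2d$ for every $m\geq 1$.

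For $d=1$ there is nothing to check beyond the trivial case: $\mathcal{S}_1(m)=[0,m]$ has length $m$, and its boundary consists of two points, so the count is constant.

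The main obstacle is interpretive. The constant quantity is the lattice-normalized boundary volume, which is the notion used in the Reeve example above (boundary volume $2$). The Euclidean surface area of the facet opposite $\mathbf{0}$ does grow with $m$, since that facet gets longer. So I will state explicitly that "boundary volume" refers to the relative (lattice-normalized) boundary volume, as in the preceding discussion, and the corollary then follows immediately from the two coefficient readings.
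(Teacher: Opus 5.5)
Your proposal is correct and follows the paper's argument: take $\mathcal{S}_d(m)$ and read off the volume $m$ and the $m$-independent half-boundary-volume from its Ehrhart polynomial. The only difference is the reference used: the paper cites Proposition~\ref{Cor-Euler-h-polynomial} for the constancy of the lower coefficients, while you use the explicit formula of Corollary~\ref{Corollary-Sdm-Ehrhart-polynomial}. Your remark that ``boundary volume'' must mean the lattice-normalized (relative) boundary volume, since the Euclidean surface area does grow with $m$, is accurate and makes explicit what the paper leaves implicit in its citation of \cite[Theorem 5.6]{BeckRobins}.
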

\begin{proof}
For an integral polytope $\mathcal{P}$, it is well-known \cite[Corollary 3.20; Theorem 5.6]{BeckRobins} that the leading coefficient of $i(\mathcal{P},t)$ equals the volume of $\mathcal{P}$, the second highest coefficient of $i(\mathcal{P},t)$ equals half of the boundary volume of $\mathcal{P}$.
This result now follows directly from Theorem \ref{Prop-Sk} and Proposition \ref{Cor-Euler-h-polynomial}.
\end{proof}

Inequalities among the coefficients of Ehrhart polynomials have also been extensively studied; see, e.g., \cite{Beck-Deloera-Stanley,Betke-McMullen,Hibi1995,Stanleyh-polynomial,Stanley-Cohen-Macau}.
By Corollary \ref{Corollary-Sdm-Ehrhart-polynomial}, the following result follows immediately.

\begin{cor}
Consider any $d$-dimensional integral polytope $\mathcal{P}$ with Ehrhart polynomial:
$$i(\mathcal{P},t)=c_{\mathcal{P},d}t^d+c_{\mathcal{P},d-1}t^{d-1}+\cdots +c_{\mathcal{P},1}t+1.$$
There does not exist a function $F(y_1,y_2,\ldots,y_{d-1}): \mathbb{Q}^{d-1}\rightarrow \mathbb{R}^+$ such that for every integral polytope $\mathcal{P}$, the following inequality holds:
$$c_{\mathcal{P},d}<F(c_{\mathcal{P},1}, c_{\mathcal{P},2}, \ldots, c_{\mathcal{P},d-1}).$$
\end{cor}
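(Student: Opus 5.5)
The plan is to argue by contradiction, using the family $\mathcal{S}_d(m)$ from Corollary~\ref{Corollary-Sdm-Ehrhart-polynomial}. Suppose such a function $F$ exists for some fixed $d\geq 1$. By Corollary~\ref{Corollary-Sdm-Ehrhart-polynomial}, for every positive integer $m$ the simplex $\mathcal{S}_d(m)$ is a $d$-dimensional integral polytope with
\[
i(\mathcal{S}_d(m),t)=mt^d+\sum_{i=0}^{d-1}\binom{d}{i}t^i .
\]
Hence $c_{\mathcal{S}_d(m),d}=m$ and $c_{\mathcal{S}_d(m),i}=\binom{d}{i}$ for $1\leq i\leq d-1$. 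In particular, the lower coefficients do not depend on $m$.

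The key step is to evaluate $F$ at the single fixed point
\[
y^\ast=\left(\binom{d}{1},\binom{d}{2},\ldots,\binom{d}{d-1}\right)\in\mathbb{Q}^{d-1}.
\]
Since $F$ takes values in $\mathbb{R}^+$, the number $F(y^\ast)$ is a finite positive real. The assumed inequality, applied to $\mathcal{P}=\mathcal{S}_d(m)$, gives $m<F(y^\ast)$ for every $m\in\mathbb{Z}^+$. Choosing $m=\lceil F(y^\ast)\rceil+1$ yields a contradiction, so no such $F$ can exist.

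The only point needing a remark is the degenerate range of $d$. For $d=1$ the domain $\mathbb{Q}^0$ is a single point and $\mathcal{S}_1(m)=[0,m]$, so the same argument applies verbatim. For $d\geq 2$ we use Theorem~\ref{Prop-Sk}, which guarantees that $\mathcal{S}_d(m)$ is indeed an integral $d$-simplex for every $m$. There is no real obstacle: all the work is contained in Corollary~\ref{Corollary-Sdm-Ehrhart-polynomial}, and the argument is a pigeonhole observation that a fixed input to $F$ must bound infinitely many distinct leading coefficients.
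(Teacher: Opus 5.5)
Your proof is correct and follows the paper's own route. The paper derives this corollary immediately from Corollary~\ref{Corollary-Sdm-Ehrhart-polynomial}: the simplices $\mathcal{S}_d(m)$ have fixed lower coefficients $\binom{d}{i}$ but unbounded leading coefficient $m$, so no finite value $F(y^\ast)$ can bound them.
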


We now end this subsection with a property of the $h^*$-polynomial of $\mathcal{S}_d(m)$.

A sequence of positive integers $(a_0,a_1,\ldots,a_d)$ is \emph{strictly log concave} if $a_i^2>a_{i-1}a_{i+1}$ for $1\leq i\leq d-1$ and is \emph{strictly unimodal} if $a_0<a_1<\cdots <a_i$ and $a_{i+1}>a_{i+2}>\cdots >a_d$ for some $0\leq i\leq d$.
It is well known that if $(a_0,a_1,\ldots,a_d)$ is strictly log concave, then it is strictly unimodal.
If the polynomial $a_0+a_1t+\cdots +a_dt^d$ has negative real roots, then the sequence $(a_0,a_1,\ldots,a_d)$ is strictly log concave and hence strictly unimodal \cite{Beck-Stapledon}.
An introduction to log concave and unimodal sequences can be found in \cite{StanleyLog-concave}.

The following conclusion was obtained by Beck and Stapledon in their study of properties related to the Hecke operator $U_n$.
\begin{prop}{\em \cite[Corollary 1.3]{Beck-Stapledon}}
Fix a positive integer $d$ and let $\rho_1<\rho_2<\cdots <\rho_d=0$ denote the roots of the Eulerian polynomial $A_d(x)$.
There exist positive integers $m_d$ and $n_d$ such that, if $\mathcal{P}$ is a $d$-dimensional lattice polytope and $n>n_d$, then $$h^*_{n\mathcal{P}}(x)=h_0^*(n)+h_1^*(n)x+\cdots +h_d^*(n)x^d$$
has negative real roots $\beta_1(n)<\beta_2(n)<\cdots <\beta_{d-1}(n) <\beta_{d}(n)<0$ with $\beta_i(n)\to\rho_i$ as $n\to \infty$, and the coefficients of $h^*_{n\mathcal{P}}(x)$ are positive, strictly log concave, and satisfy
$$1=h^*_0(n)<h^*_d(n)<h^*_1(n)<\cdots < h^*_i(n)<h^*_{d-i}(n)<h^*_{i+1}(n)<\cdots <h^*_{\lfloor \frac{d+1}{2}\rfloor}(n)<m_dh^*_d(n).$$
\end{prop}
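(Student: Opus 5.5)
Since $h^*_{n\mathcal{P}}$ is determined by $i(\mathcal{P},t)$, I would prove the statement by an explicit asymptotic expansion of $h^*_{n\mathcal{P}}(x)$ in powers of $n$, using Proposition \ref{Generating Function for Eulerian Polynomials}. Write $i(\mathcal{P},t)=\sum_{k=0}^d c_k t^k$. Then $i(n\mathcal{P},t)=i(\mathcal{P},nt)$, so
$$\mathrm{Ehr}(n\mathcal{P},x)=\sum_{k=0}^d c_k n^k\sum_{t\ge0}t^kx^t=\sum_{k=0}^d c_k n^k\frac{A_k(x)}{(1-x)^{k+1}},$$
where $A_0(x)=1$. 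Hence
$$h^*_{n\mathcal{P}}(x)=\sum_{k=0}^d c_kn^kA_k(x)(1-x)^{d-k}.$$
Here $c_d=\mathrm{vol}(\mathcal{P})>0$ and $c_{d-1}>0$ is half the boundary volume. This yields two things.

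\textbf{Leading behaviour.} Coefficientwise, $h^*_{n\mathcal{P}}(x)/(c_dn^d)=A_d(x)+O(1/n)$.

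\textbf{Second-order correction.} The $n^{d-1}$ term is $c_{d-1}A_{d-1}(x)(1-x)$, and its coefficient of $x^i$ is $A(d-1,i)-A(d-1,i-1)$.

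\textbf{Step 1: roots.} The polynomial $A_d(x)$ has the simple real roots $\rho_1<\dots<\rho_{d-1}<\rho_d=0$. The normalized polynomials $h^*_{n\mathcal{P}}(x)/(c_dn^d)$ have degree at most $d$ and converge coefficientwise to $A_d$. I would argue as follows.
\begin{enumerate}
\item By continuity of roots, for large $n$ each $\rho_i$ has exactly one nearby root $\beta_i(n)$.
\item Each $\beta_i(n)$ is real, because non-real roots of a real polynomial come in conjugate pairs, and a simple root cannot split into such a pair.
\item All $h^*_i(n)$ are positive for large $n$: $h_0^*=1$, and for $i\ge1$ the coefficient $h^*_i(n)$ grows like $c_dA(d,i)n^d$.
\item Since the coefficients are positive, no real root is nonnegative, so all roots, including $\beta_d(n)\to0$, are negative.
\end{enumerate}
Strict log-concavity then follows from Newton's inequalities for a real-rooted polynomial with positive coefficients.

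\textbf{Step 2: the chain of inequalities.} The leading terms $c_dA(d,i)n^d$ already order the coefficients whenever the Eulerian numbers differ. Two endpoint comparisons also follow from leading orders:
\begin{enumerate}
\item $1<h^*_d(n)$, since $h^*_d(n)\sim c_dn^d$.
\item $h^*_{\lfloor (d+1)/2\rfloor}(n)<m_dh^*_d(n)$, by choosing $m_d>A(d,\lfloor (d+1)/2\rfloor)$.
\end{enumerate}
The symmetry $A(d,i)=A(d,d+1-i)$ creates ties at order $n^d$, and these must be broken at order $n^{d-1}$. The comparisons concerned are $h^*_d$ versus $h^*_1$, $h^*_i$ versus $h^*_{d-i}$, and $h^*_{d-i}$ versus $h^*_{i+1}$. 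In each case the difference is
$$c_{d-1}n^{d-1}\bigl[(A(d-1,j)-A(d-1,j-1))-(A(d-1,j')-A(d-1,j'-1))\bigr]+O(n^{d-2}).$$
By the symmetry of $A(d-1,\cdot)$, this bracket reduces to $2(A(d-1,j)-A(d-1,j-1))$ for the relevant index $j$, which is positive in the increasing half. Together with $c_{d-1}>0$, this gives the strict inequalities.

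\textbf{Main obstacle.} I expect the hardest step to be the careful index bookkeeping in Step 2. It requires checking that every tie at order $n^d$ is broken in the correct direction, using strict unimodality of $A(d-1,\cdot)$ on its increasing half, with special care near the middle index for both parities of $d$. It also requires choosing $n_d$ uniformly in $\mathcal{P}$. For uniformity, I would first try to bound the ratios $c_k/c_d$, using that lattice polytopes have $c_d\ge1/d!$ and finitely many "shapes" up to the relevant bounds. If this fails, I would state $n_d$ as depending on $\mathcal{P}$ through its Ehrhart coefficients.
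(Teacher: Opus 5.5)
The paper gives no proof of this statement. It is quoted from Beck--Stapledon and used only for comparison with $\mathcal{S}_d(n)$, so I am judging your argument on its own.

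\textbf{What works.} For a \emph{fixed} polytope $\mathcal{P}$ your argument is sound. The expansion $h^*_{n\mathcal{P}}(x)=\sum_k c_kn^kA_k(x)(1-x)^{d-k}$ is correct, as are the root-continuity and conjugate-pair argument, Newton's inequalities, and the tie-breaking bracket $2(A(d-1,i+1)-A(d-1,i))>0$. One bookkeeping correction: the only ties at order $n^d$ are the mirror pairs $h^*_{d-i}$ versus $h^*_{i+1}$. The comparisons $h^*_i<h^*_{d-i}$ are already settled by $A(d,i)<A(d,i+1)$.

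\textbf{The gap.} The statement says $n_d$ depends only on $d$, and your argument does not deliver this. Your fallback of letting $n_d$ depend on $\mathcal{P}$ proves a strictly weaker result.

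\textbf{Why your uniformity route only half works.} Since $c_k=\sum_j h^*_j[t^k]\binom{t+d-j}{d}$ with $h^*_j\ge 0$ and $\sum_j h^*_j=d!\,c_d$, you do get $|c_k|\le C_d\,c_d$ with $C_d$ depending only on $d$. That makes Step 1, the untied comparisons and the $m_d$ bound uniform. A finiteness-of-shapes argument is not available anyway, since the volume is unbounded.

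For the ties it does not help. After dividing by $c_dn^d$, the deciding term is $2(c_{d-1}/c_d)\,n^{-1}(\cdots)$. The ratio $c_{d-1}/c_d$ can be arbitrarily small: for the paper's $\mathcal{S}_d(m)$ it is $d/m$. Meanwhile your uniform error is only $O(n^{-2})$.

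Put $B_k=A_k(x)(1-x)^{d-k}$ for $k\ge 1$. Since $x^{d+1}B_k(1/x)=(-1)^{d-k}B_k(x)$, the tie differences involve only $c_{d-1},c_{d-3},\dots$ and the $k=0$ term. These must be controlled by $c_{d-1}$, and nonnegativity of $h^*$ does not achieve that.

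\textbf{A $d=4$ example.} Here one finds $h^*_1(n)-h^*_4(n)=2c_3n^3+2c_1n-5$ and $h^*_2(n)-h^*_3(n)=6c_3n^3-6c_1n+10$. If you only use $h^*\ge 0$, the vector $1+3Nx^2+Nx^4$ is admissible. It gives $24c_3=10$ but $24c_1=50-12N$, so the first difference is negative for all $n$ up to order $\sqrt N$.

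What rescues lattice polytopes is geometric input on $h^*$:
\begin{itemize}
\item Stanley's inequality $h^*_0+h^*_1\ge h^*_4$ gives $c_1\ge 2-c_3$.
\item Hibi's inequality $h^*_2\ge h^*_3$ gives $c_1\le c_3+\tfrac53$.
\item Together with $24c_3\in 2\mathbb{Z}_{>0}$, these make both tie differences positive for every $n\ge 5$ and every $\mathcal{P}$.
\end{itemize}

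\textbf{The missing ingredient in general.} In arbitrary dimension you need inequalities of this kind, for instance the families $h^*_0+\cdots+h^*_{i+1}\ge h^*_d+\cdots+h^*_{d-i}$ (Stanley) and $h^*_{d-1}+\cdots+h^*_{d-i}\le h^*_2+\cdots+h^*_{i+1}$ (Hibi). They would be applied to the tie differences written as linear forms in $h^*_{\mathcal{P}}$. This lattice-polytope-specific input is what your proposal lacks.
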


Polytopes $\mathcal{S}_d(m)$ lead to a similar but distinct result, as follows.

\begin{prop}
There exist positive integers $m_d$ and $n_d$ such that, if $n>n_d$, then
$$h^*_{\mathcal{S}_d(n)}(x)=h_0^*(n)+h_1^*(n)x+\cdots +h_d^*(n)x^d$$
has negative real roots $\beta_1(n)=\rho_1<\beta_2(n)=\rho_2<\cdots <\beta_{d-1}(n)=\rho_{d-1} <\beta_{d}(n)=\frac{-1}{n-1}<0$ with $\beta_d(n)\to\rho_d=0$ as $n\to \infty$, and the coefficients of $h^*_{\mathcal{S}_d(n)}(x)$ are positive, strictly log concave, and satisfy
$$1=h^*_0(n)<h^*_d(n)<h^*_1(n)<\cdots < h^*_i(n)<h^*_{d-i}(n)<h^*_{i+1}(n)<\cdots <h^*_{\lfloor \frac{d+1}{2}\rfloor}(n)<m_dh^*_d(n).$$
\end{prop}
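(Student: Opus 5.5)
The starting point is Corollary \ref{Ehrhart-Series-of-Sdm}, which gives the factorization
$$h^*_{\mathcal{S}_d(n)}(x)=\frac{A_d(x)}{x}\,\bigl((n-1)x+1\bigr).$$
Here $A_d(x)/x$ is a polynomial of degree $d-1$ with constant term $1$. By \cite[P292, Exercise 3]{Comtet}, its roots are exactly the nonzero roots $\rho_1<\cdots<\rho_{d-1}<0$ of $A_d(x)$, and they are simple, real and negative. The linear factor contributes the single root $-1/(n-1)$.

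\textbf{Roots.} For $n$ large we have $-1/(n-1)>\rho_{d-1}$, so the roots are ordered as claimed. Since $\rho_{d-1}<0$ is fixed, it suffices to take $n_d$ large enough that $1/(n_d-1)<|\rho_{d-1}|$. Clearly $\beta_d(n)\to 0=\rho_d$. Because all roots are negative and the leading coefficient $n-1$ is positive, every coefficient is positive. The criterion quoted from \cite{Beck-Stapledon} (negative real roots imply strict log concavity) then gives strict log concavity. Finally, $h_0^*=1$ is immediate.

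\textbf{Chain of inequalities.} Write $A_d(x)/x=\sum_{i=0}^{d-1}a_ix^i$, where $a_i=A(d,i+1)$. Then
$$h_i^*(n)=(n-1)a_{i-1}+a_i,$$
with the conventions $a_{-1}=a_d=0$. In particular $h_d^*=n-1$ (using $a_{d-1}=1$) and $h_i^*=(n-1)a_{i-1}+a_i$ for $1\le i\le d-1$. Each required strict inequality between two of these compares $(n-1)a_{i-1}+a_i$ with $(n-1)a_{j-1}+a_j$. For $n$ large, the leading parts in $n$ decide the comparison, and ties in the leading parts are broken by the lower-order terms.

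The Eulerian numbers are symmetric, $a_i=a_{d-1-i}$, and strictly unimodal (they have only real roots). So the interleaved order
$$h_d^*<h_1^*<h_{d-1}^*<h_2^*<\cdots$$
reduces to comparing $a_{d-1}$ with $a_0$, $a_0$ with $a_{d-2}$, and so on. Each such pair is either equal by symmetry, in which case the constant parts $a_{d}=0<a_1$ or, in general, $a_{i}$ versus $a_{d-i}$ break the tie in the right direction via unimodality, or it is strictly ordered by unimodality. I will carry this out index by index: $h_i^*$ versus $h_{d-i}^*$ has equal leading coefficients $a_{i-1}=a_{d-i}$, and the constants compare as $a_i$ versus $a_{d-i+1}=a_{i-2}$, which is the right way round because $i\le\lfloor (d+1)/2\rfloor$.

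The upper bound $h^*_{\lfloor (d+1)/2\rfloor}<m_dh_d^*$ follows by taking $m_d>\max_i a_{i-1}+1$, say $m_d=2A(d,\lfloor (d+1)/2\rfloor)+1$, and $n$ large.

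\textbf{Main obstacle.} The main obstacle is the bookkeeping of this tie-breaking near the middle index, where the parity of $d$ matters. I would handle the odd and even cases separately, checking the last inequality of the chain directly from symmetry and strict unimodality of the Eulerian numbers. The threshold $n_d$ is then taken as the maximum over the finitely many leading-order comparisons that are strict.
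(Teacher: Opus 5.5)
Your route is the same as the paper's. Both use the factorization $h^*_{\mathcal{S}_d(n)}(x)=\frac{A_d(x)}{x}\bigl((n-1)x+1\bigr)$ from Corollary~\ref{Ehrhart-Series-of-Sdm}, the coefficient formula $h_i^*=(n-1)a_{i-1}+a_i$ with $a_j=A(d,j+1)$, and symmetry plus strict unimodality of the Eulerian numbers. Your handling of the roots, positivity, log-concavity and the constant $m_d$ is correct. It is in fact more explicit than the paper, which only checks the interleaving chain. Note that the root ordering $1/(n-1)<|\rho_{d-1}|$ is where a genuine threshold $n_d$ comes from; for example, $d=3$ needs $n\ge 5$.

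The index-by-index step, however, is mis-indexed, and read literally it proves the wrong inequality. Since $a_{d-1-i}=a_i$, we have $h_{d-i}^*=(n-1)a_i+a_{i-1}$. So the leading coefficient of $h_{d-i}^*$ is $a_i$, not $a_{d-i}$, and the pair $h_i^*,h_{d-i}^*$ does \emph{not} tie. Your conclusion ``$a_i>a_{i-2}$, the right way round'' would then assert $h_i^*>h_{d-i}^*$, which contradicts the chain. What you actually computed is the pair $h_{d-i+1}^*,h_i^*$. The correct bookkeeping, which is exactly the paper's computation, is
\[
h_{d-i}^*-h_i^*=(n-2)\,(a_i-a_{i-1})\quad \bigl(0\le i<\lfloor\tfrac{d+1}{2}\rfloor\bigr),\qquad h_{i+1}^*-h_{d-i}^*=a_{i+1}-a_{i-1}\quad \bigl(0\le i<\lfloor\tfrac{d}{2}\rfloor\bigr),
\]
with $a_{-1}=0$. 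In the first pair the leading coefficients differ, and it is settled by the strict increase of the $a_j$ up to the middle. In the second pair the leading coefficients tie, and it is settled by the constants. For even $d$ and $i=d/2-1$ this uses $a_{d/2}=a_{d/2-1}>a_{d/2-2}$. These are exact identities, so the chain holds for every $n\ge 3$, and no ``leading part for $n$ large'' argument is needed there. Largeness of $n$ is only needed for the root ordering and, with your choice of $m_d$, for the final bound $h^*_{\lfloor (d+1)/2\rfloor}<m_dh_d^*$.
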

\begin{proof}
By Corollary \ref{Ehrhart-Series-of-Sdm}, the $h^*$-polynomial of $\mathcal{S}_d(n)$ is given by
$$h^*_{\mathcal{S}_d(n)}(x)=\frac{A_d(x)\cdot (nx-x+1)}{x}=1+(n-1)x^d+\sum_{i=1}^{d-1}((n-1)A(d,i)+A(d,i+1)) x^i.$$
Therefore, we only need to verify the last inequality chain in the above proposition.
It is well known that the Eulerian polynomials are symmetric (i.e., $A(d,i)=A(d,d+1-i)$) and strictly unimodal.
When $d$ is even, for $0\leq i< \left\lfloor \frac{d+1}{2}\right\rfloor$, we have
\begin{align*}
h^*_{d-i}(n)-h^*_{i}(n)&=(n-1)A(d,d-i)+A(d,d-i+1)-(n-1)A(d,i)-A(d,i+1)
\\ &=(n-2)(A(d,i+1)-A(d,i))>0
\end{align*}
and
\begin{align*}
h^*_{i+1}(n)-h^*_{d-i}(n)&=(n-1)A(d,i+1)+A(d,i+2)-(n-1)A(d,d-i)-A(d,d-i+1)
\\ &=A(d,i+2)-A(d,i)>0.
\end{align*}
An analogous result holds when $d$ is odd.
This completes the proof.
\end{proof}

\section{Concluding Remarks}\label{Section-Seven-CR}

One of the important contributions of this paper is the resolution of the well-known sign pattern problem of coefficients in Ehrhart polynomials. This work is based on the authors's previous paper \cite{LiuTaoXin}, particularly on the restated Theorem \ref{thm-eq-Only-Verified},
which allows us to only consider certain specific sign patterns.

In \cite{LiuTaoXin}, we resolve the Ehrhart coefficient sign pattern problem for dimensions $d\leq 9$. After constructing several simplices $\mathcal{S}_d(m)$ for small $d$, we are able to construct the integral polytopes corresponding to all sign patterns of dimension $10\leq d\leq 17$.
The relevant program is available at the following address:
\url{https://github.com/TygerLiu/TygerLiu.github.io/tree/main/Procedure/Sign-pattern-problem}.

After constructing all $\mathcal{S}_d(m)$, it is still unclear how to settle the open problem: there are too many choices $r_i$ and $m_i$, and the pattern seems chaotic. The philosophy is ``the greater the chaos, the greater the opportunity". Our proof seems natural by hindsight: the greedy decomposition seems to be the
most controllable choice.

For the reader's convenience, we outline the explicit construction of a polytope with any given sign pattern:
$$\operatorname{\mathbf{Sgn}}=(s_{d-2},s_{d-3},\ldots,s_1),\quad \text{where}\quad s_i\in \{+1,-1\}\quad \text{for all}\quad i, \quad d\geq 6.$$
We now construct an integral polytope $\mathcal{P}$ of dimension $d$ such that the Ehrhart polynomial $i(\mathcal{P},t)=1+\sum_{i=1}^{d}c_it^i$ satisfies $\operatorname{sgn}(c_{i})=s_i$ for all $1\leq i\leq d-2$.

Case 1: If $s_{d-2}=+1$, then we construct an integral polytope $\mathcal{Q}$ of dimension $d-1$ satisfying
$\operatorname{Sgn}(\mathcal{Q})=(s_{d-3},\ldots,s_1)$. Then by \cite[Embedding Theorem I]{LiuTaoXin}, $\mathcal{P}=r\mathcal{Q}\times [0,1]$ is the desired polytope
when $r$ is sufficiently large.

Case 2: If $s_{1}=+1$, then we construct an integral polytope $\mathcal{Q}$ of dimension $d-1$ satisfying
$\operatorname{Sgn}(\mathcal{Q})=(s_{d-2},\ldots,s_2)$. Then by \cite[Embedding Theorem I]{LiuTaoXin}, $\mathcal{P}=\mathcal{Q}\times [0,m]$
 is the desired polytope
when $m$ is sufficiently large.

Case 3: If $s_{d-2}=s_{d-3}=-1$ and $s_1=-1$, then we construct an integral polytope $\mathcal{Q}$ of dimension $d-3$ satisfying
$\operatorname{Sgn}(\mathcal{Q})=(-s_{d-4},\ldots,-s_2)$. Then by \cite[Embedding Theorem IV]{LiuTaoXin}, $\mathcal{P}=r\mathcal{Q}\times \mathcal{T}_m$
 is the desired polytope when $m \gg r \gg 0$.

Case 4: If $s_{1}=-1$,$s_{2}=+1$, and $s_3=-1$, then we construct an integral polytope $\mathcal{Q}$ of dimension $d-2$ satisfying
$\operatorname{Sgn}(\mathcal{Q})=(s_{d-2},\ldots,s_3)$. Then by \cite[Embedding Theorem III]{LiuTaoXin}, $\mathcal{P}=r\mathcal{Q}\times \mathrm{conv}\{(0,0), (1,0), (1,a), (2,a)\}$ is the desired polytope when $a \gg r \gg 0$.

Case 5: If $\operatorname{\mathbf{Sgn}}$ contains two consecutive $+1$, then we treat the following two cases separately.
Suppose $d=d_1+d_2$ with $d_1\geq d_2\geq 2$.

1.) When $s_{d_1}=s_{d_1-1}=+1$, we construct two integral polytopes $\mathcal{Q}_1$ (of dimension $d_1$) and $\mathcal{Q}_2$ (of dimension $d_2$) satisfying $\operatorname{Sgn}(\mathcal{Q}_1)=(s_{d_1-2},\ldots,s_1)$ and $\operatorname{Sgn}(\mathcal{Q}_2)=(s_{d-2},\ldots,s_{d_1+1})$, respectively. Then by \cite[Embedding Theorem II]{LiuTaoXin}, $\mathcal{P}=r\mathcal{Q}_1\times \mathcal{Q}_2$ is the desired polytope when $r$ is sufficiently large.

2.) When $s_{d_2}=s_{d_2-1}=+1$, we construct two integral polytopes $\mathcal{Q}_1$ (of dimension $d_1$) and $\mathcal{Q}_2$ (of dimension $d_2$) satisfying $\operatorname{Sgn}(\mathcal{Q}_1)=(s_{d-2},\ldots,s_{d_2+1})$ and $\operatorname{Sgn}(\mathcal{Q}_2)=(s_{d_2-2},\ldots,s_{1})$, respectively. Then by \cite[Embedding Theorem II]{LiuTaoXin}, $\mathcal{P}=\mathcal{Q}_1\times r\mathcal{Q}_2$ is the desired polytope when $r$ is sufficiently large.

Case 6: If $\operatorname{\mathbf{Sgn}}$ does not satisfy all of Cases 1--5, then the sign pattern reduces to the case described in Theorem \ref{thm-eq-Only-Verified}. Thus, we set
$$\mathcal{P}=\left(\prod_{i=1}^k r_i \mathcal{S}_{d_i}(m_i) \right)\times r_0\mathcal{T}_{m_0}.$$
This is the main result obtained in this paper; see Theorem \ref{Thm-main-Sgn-PP0}.

In future work, we will further explore the applications of the family of simplices $\mathcal{S}_d(m)$ in Ehrhart theory.





\noindent
{\small \textbf{Acknowledgments:}}
This work was partially supported by the National Natural Science Foundation of China [12571355].

\end{document}